\documentclass[a4paper,12pt]{article}

\usepackage[margin=1.2in]{geometry}

\usepackage{amsmath}
\usepackage{algorithmic}
\usepackage{algorithm}
\usepackage{amsthm}
\usepackage{amsfonts}
\usepackage{comment}
\usepackage{graphicx}
\usepackage{hyperref}
\usepackage{color}
\usepackage{subcaption}

\usepackage{array,multirow}
\usepackage{array}

\newtheorem{assumption} {Assumption}
\newtheorem{theorem} {Theorem}
\newtheorem{lemma} {Lemma}

\newtheorem{observation} {Observation}

\def\d{{\mathbf{d}}}
\def\c{{\mathbf{c}}}

\def\a{{\mathbf{a}}}
\def\u{{\mathbf{u}}}
\def\v{{\mathbf{v}}}
\def\z{{\mathbf{z}}}
\def\w{{\mathbf{w}}}
\def\k{{\mathbf{k}}}
\def\h{{\mathbf{h}}}
\def\y{{\mathbf{y}}}
\def\q{{\mathbf{q}}}
\def\p{{\mathbf{p}}}
\def\b{{\mathbf{b}}}
\def\bS{{\mathbf{S}}}
\def\X{{\mathbf{X}}}
\def\Y{{\mathbf{Y}}}
\def\A{{\mathbf{A}}}
\def\M{{\mathbf{M}}}
\def\I{{\mathbf{I}}}
\def\B{{\mathbf{B}}}

\def\V{{\mathbf{V}}}

\def\W{{\mathbf{W}}}
\def\U{{\mathbf{U}}}

\def\P{{\mathbf{P}}}

\newcommand{\mA}{\mathcal{A}}

\newcommand{\mX}{\mathcal{X}}
\newcommand{\mV}{\mathcal{V}}

\newcommand{\mK}{\mathcal{K}}
\newcommand{\mS}{\mathcal{S}}

\newcommand{\mbS}{\mathbb{S}}

\newcommand{\E}{\mathbb{E}}

\newcommand{\trace}{\textrm{Tr}}
\newcommand{\rank}{\textrm{rank}}
\newcommand{\reals}{\mathbb{R}}

\title{A Randomized Linearly Convergent Frank-Wolfe-type Method for Smooth Convex Minimization over the Spectrahedron}
\date{}
\author{Dan Garber \\\ {\small{Technion - Israel Institute of Technology}}\\  {\small{dangar@technion.ac.il}} }

\begin{document}
\maketitle

%\abstract{
\begin{abstract}
We consider the problem of minimizing a smooth and convex function over the $n$-dimensional spectrahedron --- the set of real symmetric $n\times n$ positive semidefinite matrices with unit trace, which underlies numerous applications in statistics, machine learning and additional domains. Standard first-order methods often require high-rank matrix computations which are prohibitive when the dimension $n$ is large. The well-known Frank-Wolfe method on the other hand only requires efficient rank-one matrix computations, however, suffers from worst-case slow convergence, even under conditions that enable linear convergence rates for standard methods. In  this work we  present the first Frank-Wolfe-based algorithm that only applies efficient rank-one matrix computations and, assuming quadratic growth and strict complementarity conditions, is guaranteed, after a finite number of iterations, to converge linearly, in expectation, and independently of the ambient dimension.
\end{abstract}

\section{Introduction}
This paper is concerned with efficient first-order optimization methods (suitable for high dimensional settings) for minimizing a smooth (Lipschitz continuous gradient) and convex objective function $f:\reals^{n\times n}\rightarrow \reals$ over the $n$-dimensional spectrahedron --- the set of all $n\times n$ real symmetric positive semidefinite matrices with unit trace $\mS^n := \{\X\in\mbS^n~|~\X\succeq 0, \trace(\X)=1\}$, where $\mbS^n$ denotes the space of real symmetric $n\times n$ matrices, and $\X\succeq0$ denotes that $\X$ is positive semidefinite. This problem underlies numerous applications of interest in diverse fields of science and engineering such as statistics, machine learning, and discrete optimization, including for instance  convex relaxations to low-rank matrix recovery problems \cite{candes2010matrix, srebro2004maximum, candes2015phase, candes2011robust, jaggi2010simple, tropp2015convex}, covariance matrix estimation problems \cite{wiesel2012geodesic, sun2016robust, danon2022frank}, convex relaxations to hard combinatorial problems \cite{yurtsever2019conditional, gidel2018frank, pham2023scalable}, and many more. More specifically, we are interested in methods inspired by the well-known Frank-Wolfe method (FW), aka the conditional gradient method \cite{frank1956algorithm, levitin1966constrained, jaggi2013revisiting}. These types of methods, while applicable more broadly to smooth convex minimization over arbitrary convex and compact subsets of an inner product space, are in particular interesting in case the feasible set is the spectrahedron, see for instance \cite{hazan2008sparse, jaggi2010simple, NIPS2016_df877f38, yurtsever2017sketchy, freund2017extended} \footnote{some papers, e.g.,  \cite{jaggi2010simple, freund2017extended}, consider the closely related problem of optimization over the matrix nuclear norm ball, see  \cite{jaggi2010simple} for a discussion of the connection between these setups}, for a number of reasons. First, while orthogonal projections onto the spectrahedron, that are applied in  standard projection-based gradient methods, require in worst case a full eigen-decomposition of a symmetric $n\times n$ matrix, which in practical implementations runs in $O(n^3)$ time, a linear optimization step over the spectrahedron as required by FW, amounts only to an efficient rank-one matrix computation: a single leading eigenvector computation of a symmetric $n\times n$ matrix, which often runs in nearly linear time in the size of the matrix using fast iterative eigenvector methods. Second, since FW only applies a rank-one update, in many cases of interest both the objective function value and gradient direction could be updated from one iteration to the next much more efficiently than when the update is of high rank. This is for instance the case when the objective $f$ includes terms such as $g(\mA\X)$ (e.g., in matrix sensing problems  \cite{yurtsever2017sketchy})  or $g(\mA\X^{-1})$ (e.g., in covariance estimation problems  \cite{danon2022frank}), where $\mA:\reals^{n\times n}\rightarrow\reals^m$ is a linear map composed of rank-one matrices: $\mA\X = (\a_1^{\top}\X\b_1,\dots,\a_m^{\top}\X\b_m)^{\top}\in\reals^m$ with $\{(\a_i,\b_i)\}_{i=1}^m\subset\reals^n\times\reals^n$, and $g:\reals^m\rightarrow\reals$ is coordinate-wise separable. Another example is when the objective function includes a $\log\det(\X)$ component.  Third, another consequence of the low-rank updates of FW is that when initialized with a low-rank matrix, and provided that the number of iterations is substantially lower than the dimension $n$, the iterates produced by the method are low-rank and can be stored in factorized form to further reduce memory and runtime requirements. Finally, FW-based methods are often parameter-free and do not require tuning of any parameters (such as the smoothness or strong convexity parameters), and can often be implemented using simple and efficient line-search.

The major drawback of Frank-Wolfe, however, is worst-case slow convergence rates. For minimizing a $\beta$-smooth convex objective over the spectrahedron, the worst-case number of iterations to guarantee an $\epsilon$-approximated solution in function value is $O(\beta/\epsilon)$ \cite{hazan2008sparse, jaggi2010simple, jaggi2013revisiting}. This does not improve even under standard curvature assumptions such as quadratic growth (see for instance lower bound in \cite{jaggi2011convex}), which are well-known to facilitate linear  convergence rates (i.e., convergence rates that scale only with $\log(1/\epsilon)$) for standard projection-based methods \cite{drusvyatskiy2018error, necoara2019linear}.

In recent years, several Frank-Wolfe-inspired methods for optimization over the spectrahedron have been developed that, under conditions such as quadratic growth and/or strict complementarity, enjoy a linear convergence rate. \cite{garber2023linear} established that in case there exists a rank-one optimal solution which satisfies the strict complementarity condition, the standard FW with line-search, after a finite number of iterations, begins to converge linearly. This result, however, does not extend to the general case in which there is an optimal solution with rank greater than one.  The works \cite{allen2017linear, ding2020spectral} proposed methods, which we refer to as block-Frank-Wolfe methods,  that converge linearly without restriction on the rank of optimal solutions, however these  diverge significantly from the classical FW template: while standard FW only applies rank-one leading eigenvector computations,  \cite{allen2017linear, ding2020spectral} require on each iteration to compute the $r$ leading components in the  singular value decomposition (SVD) of an $n\times n$ matrix, where the rank parameter $r$ is required to be at least as large as the highest rank of any optimal solution. As a result, these methods have several inherent and substantial limitations. First, they require knowledge of the rank of optimal solutions which is often unknown and overestimating this quantity results in needlessly expensive SVD computations. Second, these methods are only efficient when  optimal solutions are of low-rank, since otherwise the partial SVD computations  required are not significantly more efficient than those in projection-based methods. Similarly, in this case also the important FW feature of fast update times of objective function and gradient information is lost. Finally, while FW can often be implemented with simple line-search, the method in \cite{allen2017linear}, aside from the smoothness constant $\beta$, requires the quadratic growth constant $\alpha$ in order to set the step-size, which is often very difficult to estimate. The method in  \cite{ding2020spectral} does not require these constants and instead performs a sophisticated multivariate ``step-size'' optimization on each iteration, by minimizing the original objective function $f$ over an $r$-dimensional spectrahedron (where $r$ is  the upper-bound on the rank of optimal solutions), for instance by running a nested first-order method. However, this may again only be efficient for small values of $r$.   
    
We mention in passing that aside from linear convergence results, the work \cite{NIPS2016_df877f38} presented a FW variant for the spectrahedron that also uses only rank-one matrix computations (leading eigenvector) and, under strong convexity of the objective function, is guaranteed to converge with a fast rate of the form $\min\{\frac{\rank(\X^*)^{2/3}}{t^{4/3}}, \frac{1}{\lambda_{\min}(\X^*)^2t^2}\}$, where $t$ is the iteration counter and $\lambda_{\min}(\X^*)$ denotes the smallest non-zero eigenvalue of the unique optimal solution $\X^*$. While this result does not limit the rank of the optimal solution and clearly improves upon the standard FW rate of $O(1/t)$, it is not a linear convergence rate.  
  
The dichotomy in linear convergence rates between the case of a rank-one optimal solution that can be solved via the standard FW method using only efficient rank-one matrix computations (under strict complementarity condition), and the case of a higher rank optimal solution which currently can only be  handled via block-Frank-Wolfe methods that require higher rank matrix computations (with all of the accompanying limitations discussed above) leads us to the following conceptual question:

\vspace{5pt}
\begin{center}
\textit{Are SVD computations of rank greater than one mandatory (in Frank-Wolfe-based methods) in order to guarantee a linear convergence rate (without restricting the rank of optimal solutions)?} 
\end{center}
\vspace{5pt}

%\begin{center}
%\textit{Is there a first-order method that: I. applies only efficient rank-one matrix computations, II. (almost) does not require difficult parameter tuning, and  III. under quadratic growth and strict complementarity conditions, converges with a (dimension-independent) linear rate?}
%\end{center}

We answer this question in the negative, under the assumption that quadratic growth and strict complementarity conditions hold (see formal definitions in Section \ref{sec:notAndAss}). We propose a novel Frank-Wolfe-based method that is guaranteed, after a finite number of iterations (``burn-in'' phase), to converge linearly, in expectation (the expectation is due to the use of randomization in one of the steps of the algorithm). In particular, both the finite number of initial steps and the linear convergence rate are independent of the ambient dimension.
Importantly, our  method admits an implementation, which aside from three efficient leading eigenvector computations  (the same type of rank-one computations applied in the standard FW method and can be executed in parallel), requires only $O(n^2)$ runtime per iteration. In terms of parameter tuning, our method requires only knowledge of the smoothness constant of the objective function $\beta$ (which as opposed to the rank of  optimal solutions or the quadratic growth constant, is often relatively easy to bound).\\

\begin{table*}\renewcommand{\arraystretch}{1}
{\small
\begin{center}

\begin{tabular}{ | p{9em} | p{2em} | p{8.2em}| p{3.4em}  | p{9.8em}  |} 
  \hline
  Algorithm &  SVD rank & req. parameters &burn-in phase? &  convergence rate  (after burn-in phase) \\
  \hline
  Frank-Wolfe \cite{jaggi2013revisiting} & $1$ & - & NO& $1/t$ \\ \hline
  Regularized-FW \cite{NIPS2016_df877f38} & 1 & $\beta$ & NO & $1/t^2$ (in expectation) \\ \hline
  Block-FW  \cite{allen2017linear}& $r $ & $r \geq \rank(\X^*), \alpha/\beta$ & NO &linear \\ \hline
  Spectral-FW \cite{ding2020spectral} & $r $ & $r \geq \rank(\X^*)$ & YES &linear \\ \hline
  This paper & $1$ & $\beta$ &YES & linear (in expectation) \\ \hline  
\end{tabular}\caption{Comparison of Frank-Wolfe-based methods for the spectrahedron.}\label{table:res}
\end{center}}
\vskip -0.2in
\end{table*}\renewcommand{\arraystretch}{1}  

%We answer this question in the affirmative, with  two slight reservations:  I. our proposed method does require the knowledge of a  single parameter:   the smoothness parameter of $f$ (which as opposed to the rank of the optimal solutions or the quadratic growth parameter, is often easy to bound), and II. As opposed to most standard methods, some steps in our proposed method involve the use of randomization. More concretely, our method is guaranteed, after a finite number of iterations, to converge linearly, in expectation. 
%Importantly, our  method admits an implementation, which aside of three efficient leading eigenvector computations of $n\times n$ symmetric matrices (which can be executed in parallel), requires only $O(n^2)$ runtime per iteration. 

Before moving on, we pause to discuss the strict complementarity assumption in some more detail. While such a condition does not typically appear in linear convergence results for projection-based methods, it was in fact highly instrumental to obtaining linear convergence rates for FW-based methods in a broader context: It was introduced in the classical text \cite{levitin1966constrained} for optimization over strongly convex sets, in \cite{guelat1986some, garber2020revisiting} for obtaining dimension-independent linear rates for polytopes, and in \cite{garber2023linear, ding2020spectral} for the spectrahedron setting. Moreover, \cite{ding2020spectral} proved that in our spectrahedron setting with an objective function which admits the highly popular structure $f(\X) = g(\mA\X)$, where $\mA$ is a linear map and $g$ is smooth and strongly convex,  strict complementarity further implies that $f$ satisfies quadratic growth over the spectrahedron.  \cite{zhou2017unified} further provided numerical evidence for a closely related setting (though not completely identical to ours),  that  for an objective function with the above structure, linear convergence may be unattainable when strict complementarity does not hold.

\subsection{Inspiration from the polytope setting}\label{sec:insp}
Aside from our matrix spectrahedron setting, the Frank-Wolfe method has gained significant interest in the past decades also for optimization over convex and compact polytopes. While in this setting also the worst-case convergence of the standard method (with line-search or without) scales with $1/\epsilon$, even under favorable conditions such as quadratic growth or strong convexity of the objective function, it was established in a series of works that simple modifications of the method, all of which rely in some way or another on the concept of incorporating  \textit{away steps} into the algorithm,  linear convergence can be established, and in particular that a linear rate independent of the ambient dimension could be established when strict complementarity also holds. In the celebrated work  \cite{guelat1986some}, which was the first to obtain a linear convergence result for polytopes (under strong convexity and strict complementarity), the authors proved that after a finite number of steps, all iterates of the method must lie inside the optimal face of the polytope, and there it acts as if performing unconstrained minimization and thus, once inside the optimal face, the strong convexity assumption implies linear convergence. 

Our method and analysis are highly inspired by \cite{guelat1986some}, but with some considerable differences. The main difficulty is that while polytopes have a finite number of faces which indeed allows to argue that under strict complementarity, from some iteration, all iterates  will lie inside the optimal face, such an argument is not sensible for the spectrahedron which has infinitely many faces\footnote{A face of $\mS^n$ is given by the set $\{\V\bS\V^{\top}~|~\bS\in\mS^r\}$ for some integer $r\in\{1,\dots,n\}$ and matrix $\V\in\reals^{n\times r}$ such that $\V^{\top}\V = \I$}. To bypass this difficulty, we apply our arguments w.r.t. a time-changing face of the spectrahedron, one that is defined according to a principal subspace of the current gradient direction. Once close enough to an optimal solution (which takes a finite number of steps), we can distinguish between two main cases: if our current iterate is sufficiently  aligned with this face, we can apply similar arguments to those in \cite{guelat1986some} (i.e., essentially unconstrained minimization) and argue that either a standard Frank-Wolfe step or a specialized away-step (designed specifically for our spectrahedron setting)  will reduce the approximation error by a constant fraction. Otherwise (in case the current iterate is not sufficiently aligned with the current face), we establish that a specialized \textit{randomized pairwise step}, i.e., a step that replaces a random rank-one component supported by the current iterate with a new rank-one component, reduces the error by a constant fraction, in expectation.  Thus, an overall linear convergence rate (in expectation) is obtained.

\subsection{Notation and Assumptions}\label{sec:notAndAss}
We denote matrices by uppercase boldface letters, e.g., $\X$, column vectors by lowercase boldface letters, e.g., $\v$, and scalars are denoted by lightface letters, e.g., $r,\alpha,C$.  For matrices we let $\Vert{\cdot}\Vert$ denote the spectral norm (largest singular value) and for column vectors we let it denote the Euclidean norm. We let $\Vert{\cdot}\Vert_F$ denote the Frobenius (Euclidean) norm for matrices. For a matrix $\A\in\mbS^n$ we let $\lambda_1(\A),\dots,\lambda_n(\A)$ denote its (real) eigenvalues in non-ascending order. We denote by $\textrm{Im}(\A)$ and $\textrm{Ker}(\A)$ its image and kernel, respectively, and by $\A^{\dagger}$ its Moore-Penrose pseudoinverse. We let $\I$ denote the identity matrix whose dimension will be clear from context. We let $\mbS^n_+$ denote the positive semidefinite cone in $\mbS^n$. We let $\langle{\A,\B}\rangle = \trace(\A^{\top}\B)$ denote the standard matrix inner-product.\\

We let $\mX^*\subseteq\mS^n$ denote the set of minimizers of the objective function $f$ over the spectrahedron $\mS^n$, and we let $f^*$ and $\nabla{}f^*$ denote the minimal value of $f$ over $\mS^n$ and the gradient direction at any minimizer (note that since $f$ is differentiable, the gradient direction is constant over $\mX^*$).

Recall we assume that $f$ is convex, and we also assume it is $\beta$-smooth over $\mS^n$, i.e., there exists $\beta \geq 0$ such that for any $\X,\Y\in\mS^n$ we have $\Vert{\nabla{}f(\X) - \nabla{}f(\Y)}\Vert_F \leq \beta\Vert{\X-\Y}\Vert_F$. Recall this further implies the well-known inequality (see for instance Lemma 5.7 in \cite{beck2017first}):
\begin{align*}
\forall (\X,\Y)\in\mS^n\times\mS^n: \quad f(\Y) \leq f(\X) + \langle{\Y-\X, \nabla{}f(\X)}\rangle + \frac{\beta}{2}\Vert{\Y-\X}\Vert_F^2,
\end{align*}
which we will use extensively. 

We now formally present our two main assumptions that will be assumed to hold throughout this work. 

Quadratic growth is a standard assumption in the literature on linear convergence rates for first-order methods \cite{drusvyatskiy2018error, necoara2019linear}.

\begin{assumption}[quadratic growth]\label{ass:qg}
There exists a scalar $\alpha > 0$ such that for any $\X\in\mS^n$,
\begin{align}\label{eq:QG}
\min_{\Y\in\mX^*}\Vert{\X-\Y}\Vert_F^2 \leq \frac{2}{\alpha}\left({f(\X) - f^*}\right).
\end{align}
\end{assumption}

Our second assumption is that \textit{all} optimal solutions satisfy the strict complementarity condition, which as discussed in the Introduction, has played a central role in proving linear convergence rates for Frank-Wolfe-type methods in diverse settings. In the case of the spectrahedron this takes the form of a positive eigengap in the gradient direction at optimal solutions (see also \cite{ding2020spectral, garber2023linear, garber2021convergence}) and that all optimal solutions have the same rank.

\begin{assumption}[strict complementarity of optimal solutions]\label{ass:sc}
There exists an integer $r^*\in\{1,\dots,n\}$ such that for all $\X^*\in\mX^*$ it holds that $\rank(\X^*) = r^*$,
%\begin{align}
%\min_{\X^*\in\mX^*}\lambda_{r^*}(\X^*) = \lambda_{r^*},
%\end{align}
and if $r^* < n$, then there exists a scalar $\delta  > 0$ such that
\begin{align}
\lambda_{n-r^*}(\nabla{}f^*) - \lambda_{n-r^*+1}(\nabla{}f^*) = \delta.  
%\min_{\X^*\in\mX^*}\left({\lambda_{n-r^*}(\nabla{}f(\X^*)) - \lambda_{n-r^*+1}(\nabla{}f(\X^*))}\right) = \delta.  
\end{align}
\end{assumption}

Following Assumption \ref{ass:sc} we denote the minimal value of the smallest non-zero eigenvalue over the optimal set:
\begin{align}
\lambda_{r^*} := \min_{\X^*\in\mX^*}\lambda_{r^*}(\X^*), 
\end{align}
which is guaranteed to be strictly positive.

Note Assumption \ref{ass:sc} implies that there exists  $\V^*\in\reals^{n\times r^*}$ such that $\V^{*\top}\V^* = \I$ and $\mX^*\subset \{\V^*\bS\V^{*\top}~|~\bS\in\mS^{r^*}, \bS\succ 0\}$, that is, $\mX^*$ lies in the interior of a $r^*$-dimensional face of $\mS^n$. This follows from the first-order optimality condition, see for instance Lemma 5.2 in \cite{garber2021convergence}.

One may wonder if in order to obtain our results it is indeed required, in case the optimal solution is not unique, that all of them satisfy  strict complementarity (which mandates that all solutions have the same rank). In some previous analyses of Frank-Wolfe methods with strict complementarity conditions \cite{guelat1986some, garber2020revisiting, garber2023linear} it was assumed that the optimal solution is unique (e.g., by assuming $f$ is strongly convex). Our analysis, which does not assume uniqueness, indeed requires that all solutions satisfy this condition. It remains open whether this is mandatory. 

%There exists an optimal solution $\X^*$ of rank $r^* = \rank(\X^*)$ which satisfies the strict complementarity condition, that is, if $r^* <n$ then,
%\begin{align}\label{eq:SC}
%\delta: = \lambda_{n-r^*}(\nabla{}f(\X^*)) - \lambda_{n-r^*+1}(\nabla{}f(\X^*)) > 0.
%\end{align}
%Moreover, the following non-degeneracy condition holds: writing the eigen-decomposition of $\X^*$ as $\X^* = \V^*\Lambda^*\V^{*\top}$ with $\V^*\in\reals^{n\times r^*}$ such that $\V^{*\top}\V^* = \I$, and a diagonal positive definite $r^*\times r^*$ matrix $\Lambda^*$, we have that the set of all optimal solution $\mX^*$, satisfies:
%\begin{align}\label{eq:nondegenerate}
%\mX^*\subseteq\{\V^*\M\V^{*\top} ~|~\M\in\mS^{r^*}, ~ \M \succeq \lambda_{r^*}\I\},
%\end{align}
%for some scalar $\lambda_{r^*} > 0$.

\subsection{Organization of this paper}
Section \ref{sec:algorithm} presents our algorithm and its efficient implementation. Section \ref{sec:analysis} then formally presents and proves the convergence rates of our proposed algorithm. Section \ref{sec:numerics} presents some numerical experiments. Finally, Section \ref{sec:future} outlines some open questions for future research.

\section{The Algorithm}\label{sec:algorithm}
Our algorithm is given below as Algorithm \ref{alg:AFW}. As discussed in Section \ref{sec:insp}, our algorithm is inspired by the \textit{Away-steps Frank-Wolfe} algorithm for optimization over polytopes \cite{guelat1986some, lacoste2015global, garber2020revisiting}.
Our algorithm applies three different types of steps: I. Standard Frank-Wolfe steps, II. Away/Drop steps, and III. Pairwise steps, all of which are detailed below. In Section \ref{sec:implement} we discuss in greater detail the efficient implementation of these steps. In particular we show how each step corresponds to a simple leading eigenvector computation. We also describe an implementation that, aside from these eigenvector computations (which can be computed in parallel), only requires $O(n^2)$ time per iteration. 

\paragraph*{Frank-Wolfe steps:} These  are based on moving from the current feasible point towards an extreme point of the feasible set that minimizes the inner-product with the current gradient direction. In  case of the spectrahedron, these correspond to a rank-one update of the form $\X_{t+1} \gets (1-\eta_t)\X_t + \eta_t\v_{t,+}\v_{t,+}^{\top}$, where $\v_{t,+}$ is a unit-length eigenvector corresponding to the leading eigenvalue of $-\nabla{}f(\X_t)$ (see also \cite{hazan2008sparse, jaggi2010simple, jaggi2013revisiting}). We set the step size $\eta_t\in[0,1]$ using line-search. 

\paragraph*{Away and Drop steps:} Away steps perform rank-one updates in which the weight of some rank-one component supported by the current iterate is lowered in favor of other components. This takes the form: $\X_{t+1} \gets \frac{1}{1-\eta_t}\left({\X_t - \eta_t\v_{t,-}\v_{t,-}^{\top}}\right)$, where $\v_{t,-}$ is a unit vector in $\textrm{Im}(\X_t)$ that maximizes the quadratic product with the current gradient direction. The step size $\eta_t$ is set via line search and satisfies $\eta_t\in[0,(\v_{t,-}^{\top}\X_t^{\dagger}\v_{t,-})^{-1}]$ in order to maintain the feasibility of $\X_{t+1}$ w.r.t. $\mS^n$ (see Lemma \ref{lem:stepsize} in the sequel). In particular, when $\eta_t = (\v_{t,-}^{\top}\X_t^{\dagger}\v_{t,-})^{-1}$ (i.e., the maximal step size allowed), we have that $\rank(\X_{t+1}) < \rank(\X_t)$, and then we refer to this as a \textit{drop step}. It should be mentioned that this type of away steps closely resembles the ones proposed in \cite{freund2017extended} for optimization over the matrix nuclear norm ball.
%%In particular, in order to encourage fast adaptation the iterates to the rank of the optimal face, on each iteration of Algorithm we first examine whether  a drop step will not increase the function value. If this is the case, we execute the drop step and continue to the next iteration. 

\paragraph*{Pairwise steps:} 
Pairwise steps correspond to rank-two updates, in which a rank-one component in the support of the current iterate is replaced with a new rank-one component. The rank-one component to be removed is chosen uniformly at random, and the new rank-one component is chosen according to a \textit{proximal gradient style} rule. Formally, the update takes the form: $\X_{t+1} \gets \X_t + \gamma_t(\u_{t,+}\u_{t,+}^{\top}-\u_{t,-}\u_{t,-}^{\top})$, where $\u_{t,-}$ (the component to be removed) is chosen uniformly at random from the unit sphere in $\textrm{Im}(\X_t)$, the step-size is set to $\gamma_t = \left({\u_{t,-}^{\top}\X_t^{\dagger}\u_{t,-}}\right)^{-1}$ (so that feasibility of $\X_{t+1}$ is guaranteed, see Lemma \ref{lem:stepsize}), and $\u_{t,+}$ is a unit vector chosen according to the rule:  
\begin{align}\label{eq:pws}
\u_{t,+} \gets \textrm{argmin}_{\Vert{\u}\Vert=1}\u^{\top}\nabla{}f(\X_t)\u + \frac{\beta\gamma_t}{2}\Vert{\u\u^{\top}-\u_{t,-}\u_{t,-}^{\top}}\Vert_F^2.
\end{align}
Note that since $\u$ in the RHS is constrained to have unit norm, the above simplifies to 
\begin{align*}
\u_{t,+} \gets \textrm{argmin}_{\Vert{\u}\Vert=1}\u^{\top}\left({\nabla{}f(\X_t)-\beta\gamma_t\u_{t,-}\u_{t,-}^{\top}}\right)\u,
\end{align*}
which in turn implies that $\u_{t,+}$ is simply a unit-length leading eigenvector of the matrix $\beta\gamma_t\u_{t,-}\u_{t,-}^{\top}-\nabla{}f(\X_t)$. 
Note that as opposed to previous steps, this step involves both the knowledge of the smoothness parameter $\beta$ and the use of randomization. Note furthermore that in this step we always take the maximal step-size $\gamma_t$ that is guaranteed to maintain feasibility (see Lemma \ref{lem:stepsize}). In some sense,  one can think about the computation of $\u_{t,+}$ in Eq. \eqref{eq:pws} as already incorporating the choice of step-size within the choice of $\u_{t,+}$ (the interested reader can formally see  this in the proof  of Lemma \ref{lem:pfwstep}, which analyzes the benefit of applying these steps).

\paragraph*{Choice of step:} On each iteration, our algorithm first attempts to perform a drop step (i.e., an away step with maximal step-size) in order to quickly adapt the rank of the iterates to that of the optimal face. If this results in a non-increasing  objective value, then the algorithm continues to the next iteration. Otherwise, it computes all three possibilities (can be done in parallel): Frank-Wolfe step,  Away step, and a Pairwise step. The algorithm then follows the step which reduces the objective value the most.

\begin{algorithm}[H]
\begin{algorithmic}
\caption{Spectrahedron Frank-Wolfe with Away and Pairwise steps}\label{alg:AFW}
\STATE $\X_1 \gets $ arbitrary matrix in $\mS^n$
\FOR{$t=1,2,\dots$}
\STATE $\v_{t,-} \gets\textrm{argmax}_{\v\in\textrm{Im}(\X_t):\Vert{\v}\Vert=1}\v^{\top}\nabla{}f(\X_t)\v$
%\STATE $\v_{t,-} \gets\arg\max_{\v}\{\v^{\top}\nabla_t\v~|~\v\in\textrm{Im}(\X_t), \Vert{\v}\Vert=1\}$
\STATE $\lambda_t \gets \left({\v_{t,-}^{\top}\X_t^{\dagger}\v_{t,-}}\right)^{-1}$
\IF {$\lambda_t < 1$} 
\STATE $\X_{t+1}^{\textrm{drop}} \gets \frac{1}{1-\lambda_t}\left({\X_t - \lambda_t\v_{t,-}\v_{t,-}^{\top}}\right)$
\IF{$f(\X_{t+1}^{\textrm{drop}}) \leq f(\X_t)$} 
\STATE $\X_{t+1} \gets \X_{t+1}^{\textrm{drop}} $ \COMMENT{take a drop step}
\ENDIF
\ENDIF
\IF{drop step was not taken}
\STATE $\v_{t,+} \gets $ leading eigenvector of $-\nabla{}f(\X_t)$
\STATE $\eta^{\textrm{FW}}_{t} \gets \textrm{argmin}_{\eta\in[0,1]}f(\X_t + \eta(\v_{t,+}\v_{t,+}^{\top}-\X_t))$
\STATE $\X_{t+1}^{\textrm{FW}} \gets \X_t + \eta^{\textrm{FW}}_{t}(\v_{t,+}\v_{t,+}^{\top}-\X_t)$
\STATE $\eta^{\textrm{AFW}}_{t} \gets \textrm{argmin}_{\eta\in[0,\lambda_t]}f\left({\frac{1}{1-\eta}\left({\X_t -\eta\v_{t,-}\v_{t,-}^{\top}}\right)}\right)$
\STATE $\X_{t+1}^{\textrm{AFW}} \gets \frac{1}{1-\eta^{\textrm{AFW}}_{t}}\left({\X_t -\eta^{\textrm{AFW}}_{t}\v_{t,-}\v_{t,-}^{\top}}\right)$
\STATE $\u_{t,-} \gets$ uniformly dist. random vector in $\{\z\in\textrm{Im}(\X_t)~|~\Vert{\z}\Vert=1\}$
\STATE $\gamma_t \gets \left({\u_{t,-}^{\top}\X_t^{\dagger}\u_{t,-}}\right)^{-1}$
\STATE $\u_{t,+} \gets $ leading eigenvector of $\beta\gamma_t\u_{t,-}\u_{t,-}^{\top} - \nabla{}f(\X_t)$
%\STATE $\u_{t,+} \gets \textrm{argmin}_{\Vert{\u}\Vert=1}\u^{\top}\nabla{}f(\X_t)\u + \frac{\beta\gamma_t}{2}\Vert{\u\u^{\top}-\u_{t,-}\u_{t,-}^{\top}}\Vert_F^2$
%\STATE $\u_{t,+} \gets \in\arg\min_{\u}\{\u^{\top}\nabla_t\u + \frac{\beta\gamma_t}{2}\Vert{\u\u^{\top}-\u_{t,-}\u_{t,-}^{\top}}\Vert_F^2~|~ \Vert{\u}\Vert=1 \}$
\STATE $\X_{t+1}^{\textrm{PFW}} \gets \X_t + \gamma_t\left({\u_{t,+}\u_{t,+}^{\top} - \u_{t,-}\u_{t,-}^{\top}}\right)$
\STATE $\X_{t+1} \gets \textrm{argmin}_{\X\in\{\X_{t+1}^{\textrm{FW}}, \X_{t+1}^{\textrm{AFW}}, \X_{t+1}^{\textrm{PFW}}\}}f(\X)$  
\ENDIF
\ENDFOR
\end{algorithmic}
\end{algorithm}

\begin{lemma}[feasibility of Algorithm \ref{alg:AFW}]
The sequence $(\X_t)_{t\geq 1}$ produced by Algorithm \ref{alg:AFW} is always feasible w.r.t. $\mS^n$.
\end{lemma}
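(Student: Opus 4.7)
My plan is a straightforward induction on $t$. The base case is immediate because $\X_1$ is chosen to be an arbitrary element of $\mS^n$. For the inductive step, assuming $\X_t \in \mS^n$, I would check that each of the three possible assignments to $\X_{t+1}$ in Algorithm \ref{alg:AFW} produces a matrix that is PSD and has unit trace.

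The Frank-Wolfe branch is immediate: $\X_{t+1}^{\textrm{FW}} = (1-\eta^{\textrm{FW}}_t)\X_t + \eta^{\textrm{FW}}_t \v_{t,+}\v_{t,+}^{\top}$ with $\eta^{\textrm{FW}}_t\in[0,1]$, so it is a convex combination of two elements of $\mS^n$, and $\mS^n$ is convex. The work is in the away/drop and pairwise branches, and both reduce to a single technical fact that I expect is exactly the content of the invoked Lemma \ref{lem:stepsize}: for $\X\in\mS^n_+$ and any unit vector $\v\in\textrm{Im}(\X)$, one has $\X - \alpha\v\v^{\top}\succeq 0$ if and only if $\alpha\leq(\v^{\top}\X^{\dagger}\v)^{-1}$. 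I would justify this by writing the eigendecomposition $\X=\sum_{i}\lambda_i\x_i\x_i^{\top}$ with $\lambda_i>0$ on $\textrm{Im}(\X)$, expanding $\v=\sum_i c_i\x_i$, and reducing $\X-\alpha\v\v^{\top}$ to the block $D-\alpha cc^{\top}$ on $\textrm{Im}(\X)$ with $D=\textrm{diag}(\lambda_i)$; the factorization $D-\alpha cc^{\top}=D^{1/2}(\I-\alpha D^{-1/2}cc^{\top}D^{-1/2})D^{1/2}$ then reduces PSD-ness to $\alpha\,c^{\top}D^{-1}c=\alpha\,\v^{\top}\X^{\dagger}\v\leq 1$.

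Using this, the drop/away branch satisfies $\eta\in[0,\lambda_t]$ with $\lambda_t=(\v_{t,-}^{\top}\X_t^{\dagger}\v_{t,-})^{-1}$, so $\X_t-\eta\v_{t,-}\v_{t,-}^{\top}\succeq 0$; the trace of this matrix equals $1-\eta$, and since the algorithm only considers the drop-step case under the condition $\lambda_t<1$ (and the AFW line search is over $[0,\lambda_t]\subseteq[0,1)$ after a quick observation that $\v^{\top}\X_t^{\dagger}\v\geq 1$ for any unit $\v\in\textrm{Im}(\X_t)$ by Cauchy–Schwarz against $\trace(\X_t)=1$), division by $1-\eta>0$ preserves PSD-ness and scales the trace back to one. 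For the pairwise branch, the same lemma with $\gamma_t=(\u_{t,-}^{\top}\X_t^{\dagger}\u_{t,-})^{-1}$ yields $\X_t-\gamma_t\u_{t,-}\u_{t,-}^{\top}\succeq 0$, and adding the PSD rank-one term $\gamma_t\u_{t,+}\u_{t,+}^{\top}$ preserves PSD-ness; the trace is unchanged because the two rank-one perturbations have equal trace $\gamma_t$.

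I anticipate that the only real obstacle is the rank-one subtraction fact above, but it is purely linear-algebraic and—being cited as Lemma \ref{lem:stepsize}—should already be available. The rest of the argument is just a case analysis matching each algorithmic branch with the appropriate instance of that lemma, together with the observation that $\lambda_t\leq 1$ always holds (so the denominators are well-defined on the interior of the step-size interval).
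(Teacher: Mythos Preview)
Your proposal is correct and follows essentially the same approach as the paper: the paper's proof is a one-line deferral to Lemma~\ref{lem:stepsize}, and you do exactly that, making the case analysis for each algorithmic branch explicit. Your sketch of Lemma~\ref{lem:stepsize} via eigendecomposition (reducing to $D-\alpha cc^{\top}=D^{1/2}(\I-\alpha D^{-1/2}cc^{\top}D^{-1/2})D^{1/2}$) is a minor variant of the paper's appendix proof, which uses the same factorization $\X^{1/2}(\I-\lambda\X^{\dagger 1/2}\v\v^{\top}\X^{\dagger 1/2})\X^{1/2}$ without first diagonalizing; both are the same linear-algebraic idea.
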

The correctness of this lemma could be easily verified using the following lemma, whose proof is given in the appendix.

\begin{lemma}\label{lem:stepsize}
Let $\X\in\mbS^n_+$ and $\v\in\textrm{Im}(\X), \v\neq \mathbf{0}$. Consider the matrix $\Y = \X - \lambda\v\v^{\top}$ for some $\lambda \geq 0$. If $\lambda \leq (\v^{\top}\X^{\dagger}\v)^{-1}$, then $\Y\succeq 0$. Moreover, if $\lambda = (\v^{\top}\X^{\dagger}\v)^{-1}$, then $\rank(\Y) = \rank(\X)-1$ and in particular  $\X^{\dagger}\v\in\textrm{Ker}(\Y)$.
%$\A-\lambda\v\v^{\top}\succeq 0$ if and only if $1-\lambda\v^{\top}\A^{\dagger}\v \geq 0$. 
\end{lemma}
%\begin{proof}
%Using the assumption that $\lambda \leq (\v^{\top}\X^{\dagger}\v)^{-1}$ we have that
%\begin{align*}
%1 - \lambda\v^{\top}\X^{\dagger}\v \geq 0 &\Longrightarrow 1- \lambda_1\left({\lambda{\X^{\dagger}}^{1/2}\v\v^{\top}{\X^{\dagger}}^{1/2}}\right) \geq 0\\
%&\Longrightarrow  \lambda_n\left({\I -  \lambda{\X^{\dagger}}^{1/2}\v\v^{\top}{\X^{\dagger}}^{1/2}}\right) \geq 0 \\
%&\Longrightarrow  \lambda_n\left({\X^{1/2}\left({\I -  \lambda{\X^{\dagger}}^{1/2}\v\v^{\top}{\X^{\dagger}}^{1/2}}\right)\X^{1/2}}\right) \geq 0 \\
%&\Longrightarrow  \X^{1/2}\left({\I -  \lambda{\X^{\dagger}}^{1/2}\v\v^{\top}{\X^{\dagger}}^{1/2}}\right)\X^{1/2} \succeq 0 \\
%&\underset{(a)}{\Longrightarrow}  \X -  \lambda\v\v^{\top}  \succeq 0, 
%\end{align*}
%where (a) relies on the assumption that $\v\in\textrm{Im}(\X)$ and so $\X^{1/2}{\X^{\dagger}}^{1/2}\v = \v$.

%Suppose now that $\lambda = (\v^{\top}\X^{\dagger}\v)^{-1}$. Note that since $\v\in\textrm{Im}(\X)$, we have that $\textrm{Im}(\Y) \subseteq \textrm{Im}(\X)$. Consider now the vector $\u = \X^{\dagger}\v\in\textrm{Im}(\X)$. It holds that
%\begin{align*}
%\Y\u = \left({\X-\frac{1}{\v^{\top}\X^{\dagger}\v}\v\v^{\top}}\right)\X^{\dagger}\v = \X\X^{\dagger}\v - \v = 0,
%\end{align*}
%implying that $\u\notin\textrm{Im}(\Y)$ and so, $\rank(\Y) < \rank(\X)$.
%\end{proof}

\subsection{Implementation details}\label{sec:implement}
We now discuss in more detail the efficient implementation of Algorithm \ref{alg:AFW}.
Let us fix  some iteration $t$ of the algorithm and denote by $\Pi_t\in\mbS^n_+$ the projection matrix onto the subspace $\textrm{Im}(\X_t)$. Note that $\Pi_t = \X_t\X_t^{\dagger}$. Let us assume for now that both matrices $\X_t^{\dagger}, \Pi_t$ are explicitly given (we shall discuss their efficient maintenance throughout the run of the algorithm in the sequel).

\paragraph*{Computing the vector $\v_{t,-}$:} This computation could be reduced to a standard leading eigenvector computation as follows. Computing $\v_{t,-}$, as defined in Algorithm \ref{alg:AFW}, corresponds to computing a vector in the set:
\begin{align}\label{eq:awayComp:1}
&\arg\max\nolimits_{\v\in\reals^n:\Pi_t\v=\v, \Vert{\v}\Vert =1}\v^{\top}\nabla{}f(\X_t)\v \nonumber \\
&= \arg\max\nolimits_{\v\in\reals^n:\Pi_t\v=\v, \Vert{\v}\Vert =1}\v^{\top}\Pi_t\nabla{}f(\X_t)\Pi_t\v \nonumber \\
&= \arg\max\nolimits_{\v\in\reals^n:\Pi_t\v=\v, \Vert{\v}\Vert =1}\v^{\top}\left({\Pi_t\nabla{}f(\X_t)\Pi_t + (1+\zeta)\Vert{\Pi_t\nabla{}f(\X_t)\Pi_t}\Vert\Pi_t}\right)\v,
\end{align}
where the last equality holds for any real scalar $\zeta$.

For any $\zeta >0$, the matrix  $\M=\Pi_t\nabla{}f(\X_t)\Pi_t + (1+\zeta)\Vert{\Pi_t\nabla{}f(\X_t)\Pi_t}\Vert\Pi_t$ in the RHS of \eqref{eq:awayComp:1} is positive semidefinite, all its eigenvectors with non-zero eigenvalue must lie in $\textrm{Im}(\Pi_t) = \textrm{Im}(\X_t)$, and $\lambda_1(\M) > 0$. Thus, when solving the maximization problem in the RHS of \eqref{eq:awayComp:1}, we can drop the explicit constraint $\Pi_t\v=\v$, and solve the standard leading eigenvector problem $\max_{\v\in\reals^n:\Vert{\v}\Vert=1}\v^{\top}\M\v$.  In particular, $\M$ need not be computed explicitly: standard fast iterative leading eigenvector methods (such as Lanczos-type algorithms) rely only on computing matrix-vector products w.r.t. the matrix $\M$, and thus, given $\Pi_t$ (or $\X_t^{\dagger}$), these could be implemented efficiently in $O(n^2)$ time per such matrix-vector product without explicitly computing $\M$.

\paragraph*{Computing the vector $\u_{t,-}$:}  $\u_{t,-}$ is a random vector distributed uniformly over the unit sphere in the space $\textrm{Im}(\X_t)$. Using  the rotation invariance of the multivariate standard Gaussian distribution $\mathcal{N}(\mathbf{0}, \I_n)$, we have that given a random Gaussian vector $\z\sim\mathcal{N}(\mathbf{0}, \I_n)$, the vector $\u_{t,-} = \frac{\Pi_t\z}{\Vert{\Pi_t\z}\Vert}$ is distributed uniformly  over the unit sphere in $\textrm{Im}(\X_t)$.

\paragraph*{Computing the vector $\u_{t,+}$:} As already explained above (see discussion following Eq. \eqref{eq:pws}), $\u_{t,+}$ can be computed using a standard leading eigenvector computation w.r.t. the matrix $\beta\gamma_t\u_{t,-}\u_{t,-}^{\top}-\nabla{}f(\X_t)$.\\

All computations discussed above require maintaining the projection matrix $\Pi_t$. Additionally, the computations of the scalars $\lambda_t, \gamma_t$ in Algorithm \ref{alg:AFW} require also access to the pseudoinverse matrix $\X_t^{\dagger}$. We now discuss how it is  possible to efficiently update either $\Pi_t$ or $\X_t^{\dagger}$ (or both) throughout the run of the algorithm.

\paragraph*{Maintaining only the pseudoinverse matrix $\X_t^{\dagger}$:} Similarly to the well-known  Sherman-Morrison-Woodbury formula for the fast update of  the matrix inverse after a rank-one update, \cite{meyer1973generalized} established a formula for updating the pseudoinverse matrix after a rank-one update. While the update in case of the pseudoinverse is substantially more involved, the computational complexity is similar. In particular,  on each iteration $t$ of Algorithm \ref{alg:AFW}, given $\X_{t}^{\dagger}$ and the quantities $\v_{t,-}$, $\lambda_t$, $\v_{t,+}$, $\u_{t,-}$, $\u_{t,+}$, $\eta_t^{\textrm{FW}}$, $\eta_t^{\textrm{AFW}}$, $\eta_t^{\textrm{PFW}}$, $\gamma_t$, the pseudoinverse matrix of the next iterate, $\X_{t+1}^{\dagger}$, can be computed in $O(n^2)$ time \footnote{while \cite{meyer1973generalized} considers the general case of complex non-square matrices which leads to 6 different types of updates for the pseudoinverse, here since we consider symmetric matrices, only 3 of the 6 possibilities need to be considered}. For more concrete details see Lemma \ref{lem:pinvUpdate} in the appendix. Since we have that $\Pi_{t+1} = \X_{t+1}\X_{t+1}^{\dagger}$, it is not required to also maintain $\Pi_{t+1}$ explicitly. This leads to the following theorem.
\begin{theorem}
Algorithm \ref{alg:AFW} admits an implementation such that on each iteration $t$, aside from the computation of the eigenvectors $\v_{t,-}, \v_{t,+}, \u_{t,+}$, all other computations can be carried out in $O(n^2)$ time.
\end{theorem}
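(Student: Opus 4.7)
The plan is to inventory every operation performed in one iteration of Algorithm \ref{alg:AFW} and verify that, if the pseudo-inverse $\X_t^{\dagger}$ is maintained across iterations (and $\nabla f(\X_t)$ is available as a dense $n\times n$ matrix), each operation other than the three leading-eigenvector calls reduces to a constant number of dense matrix-vector products and inner products in $\reals^n$, giving an $O(n^2)$ overhead.

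First I would argue that we never need to form $\Pi_t$ explicitly. Since $\Pi_t = \X_t\X_t^{\dagger}$, any application $\w \mapsto \Pi_t \w$ costs $O(n^2)$ via $\X_t\bigl(\X_t^{\dagger}\w\bigr)$. In particular, $\u_{t,-}$ is sampled in $O(n^2)$ by drawing a standard Gaussian $\z\in\reals^n$ (an $O(n)$ operation), forming $\Pi_t\z$ as above, and normalizing. The scalars $\lambda_t = (\v_{t,-}^{\top}\X_t^{\dagger}\v_{t,-})^{-1}$ and $\gamma_t = (\u_{t,-}^{\top}\X_t^{\dagger}\u_{t,-})^{-1}$ are single quadratic forms in $\X_t^{\dagger}$, hence $O(n^2)$. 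Each matrix-vector product with the three matrices that drive the eigensolvers — namely $\Pi_t\nabla f(\X_t)\Pi_t + (1+\zeta)\|\nabla f(\X_t)\|\Pi_t$ for $\v_{t,-}$, $-\nabla f(\X_t)$ for $\v_{t,+}$, and $\beta\gamma_t\u_{t,-}\u_{t,-}^{\top} - \nabla f(\X_t)$ for $\u_{t,+}$ — decomposes into $O(1)$ dense matrix-vector products with $\X_t,\X_t^{\dagger},\nabla f(\X_t)$ together with $O(1)$ inner products, each costing $O(n^2)$; since the theorem excludes the eigenvector computations themselves, the per-product $O(n^2)$ cost is folded into that budget.

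Next I would handle the book-keeping. After the algorithm commits to one of the candidate iterates, the transition $\X_t \to \X_{t+1}$ is a rank-one update in the FW, AFW, and drop cases, and a rank-two update in the PFW case. The update $\X_t^{\dagger} \to \X_{t+1}^{\dagger}$ is then obtained by invoking Lemma \ref{lem:pinvUpdate} (Meyer's rank-one pseudo-inverse update formula) either once or twice; each invocation is $O(n^2)$ because it consists of a constant number of rank-one corrections plus vector manipulations. This is the step I expect to be the main technical obstacle in the accompanying proof of Lemma \ref{lem:pinvUpdate}, since Meyer's formula splits into several cases depending on whether the update preserves the image of $\X_t$ or alters its rank; in particular, the drop step must be treated carefully because $\rank(\X_{t+1}^{\textrm{drop}}) < \rank(\X_t)$, and it is precisely in this case that Lemma \ref{lem:stepsize} tells us $\X_t^{\dagger}\v_{t,-}\in\textrm{Ker}(\X_{t+1}^{\textrm{drop}})$, which is the algebraic hook making the correct branch of Meyer's formula go through in $O(n^2)$.

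Finally, the outer logic of the iteration — comparing $f(\X_{t+1}^{\textrm{drop}})$ to $f(\X_t)$, running the two one-dimensional line searches, and selecting the $\arg\min$ over three candidates — involves a bounded number of function and gradient queries, together with the $O(n^2)$ candidate assembly cost (since each candidate is a rank-$\leq 2$ modification of $\X_t$ and can be stored implicitly until it is selected). Aggregating these bounds, every non-eigenvector computation per iteration is $O(n^2)$, as claimed.
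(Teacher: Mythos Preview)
Your proposal is correct and follows essentially the same approach as the paper: maintain $\X_t^{\dagger}$ across iterations, apply $\Pi_t$ implicitly via $\X_t\X_t^{\dagger}$, and update $\X_t^{\dagger}\to\X_{t+1}^{\dagger}$ in $O(n^2)$ using Meyer's rank-one pseudo-inverse formula (Lemma~\ref{lem:pinvUpdate}), invoked once or twice depending on the chosen step. Your inventory is in fact more detailed than the paper's, which simply states the theorem as a corollary of the preceding discussion and Lemma~\ref{lem:pinvUpdate}; one shared looseness is that neither you nor the paper pins down the cost of the exact line searches and function evaluations, which are implicitly treated as outside the scope of the $O(n^2)$ claim.
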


\paragraph*{Maintaining only the projection matrix $\Pi_t$:} A different possibility is not to maintain the pseudoinverse $\X_t^{\dagger}$ explicitly, but only the projection matrix onto $\textrm{Im}(\X_t)$, $\Pi_t$\footnote{it may be helpful to recall that for any orthonormal basis for $\textrm{Im}(\X_t)$, $\{\q_1,\dots,\q_{\rank(\X_t)}\}$, we have that $\Pi_t = \sum_{i=1}^{\rank(\X_t)}\q_i\q_i^{\top}$}. Let us consider the four possible cases.
\textbf{I.} In case a drop step is performed on some iteration $t$, then according to Lemma \ref{lem:stepsize} we have that $\z = \X_t^{\dagger}\v_{t,-}$ satisfies $\z\in\textrm{Ker}(\X_{t+1})$. Note also that $\z\in\textrm{Im}(\X_t)$. This implies that $\Pi_{t+1} = (\I-\frac{1}{\Vert{\z}\Vert^2}\z\z^{\top})\Pi_t = \Pi_t - \frac{1}{\Vert{\z}\Vert^2}\z\z^{\top}$. The vector $\z$ could be approximated to arbitrary accuracy by solving the simple least squares problem: $\min_{\y\in\reals^n}\Vert{\X_t\y-\v_{t,-}}\Vert^2$ \footnote{Recall each solution to this LS problem is of the form $\X_t^{\dagger}\v_{t,-} + \q$, for some $\q\in\textrm{Ker}(\X_t)$}.  \textbf{II.} In case a standard Frank-Wolfe step is performed and $\eta_t^{\textrm{FW}} < 1$, defining $\z := \v_{t,+} - \Pi_t\v_{t,+}$, we have that $\Pi_{t+1} = \Pi_t + \frac{1}{\Vert{\z}\Vert^2}\z\z^{\top}$ (if $\z = \mathbf{0}$, we simply set $\Pi_{t+1} = \Pi_t$). In case $\eta_t^{\textrm{FW}} = 1$, we simply have that $\Pi_{t+1} = \v_{t,+}\v_{t,+}^{\top}$. \textbf{III.} In case an away step is taken (which is not a drop step), we have that  $\textrm{Im}(\X_{t+1}) = \textrm{Im}(\X_t)$, and so $\Pi_{t+1} = \Pi_t$. \textbf{IV.} If a pairwise step is taken, then we update $\Pi_t$ using two steps: a drop step (as in Case \textbf{I} above) w.r.t. the direction $\u_{t,-}$, and then adding the direction  $\u_{t,+}$ (as in Case \textbf{II} above). That is, denoting $\w := \X_t^{\dagger}\u_{t,-}$, $\Pi_{t+1/2}:= \Pi_t - \frac{1}{\Vert{\w}\Vert^2}\w\w^{\top}$, and then denoting $\z := \u_{t,+} - \Pi_{t+1/2}\u_{t,+}$, we have that $\Pi_{t+1} = \Pi_{t+1/2} + \frac{1}{\Vert{\z}\Vert^2}\z\z^{\top}$ (or simply $\Pi_{t+1} = \Pi_{t+1/2}$ in case $\z=\mathbf{0}$). Finally, as with the computation of the product $\X_t^{\dagger}\v_{t,-}$ in Case \textbf{I}, computing the scalars $\lambda_t,\gamma_t$, which requires computing a matrix-vector product with the pseudoinverse matrix $\X_t^{\dagger}$, can be done by solving a simple least squares problem.

\paragraph*{Line-search computations:} There are two such computations in the algorithm, one for computing $\eta_t^{\textrm{FW}}$ --- the step-size for a standard FW update, and the second for computing $\eta_t^{\textrm{AFW}}$ --- the step-size for the away step. First, let us see that both of these take the same form. We can rewrite the updates for $\eta_t^{\textrm{AFW}}$ and $\X_{t+1}^{\textrm{AFW}}$ in Algorithm \ref{alg:AFW} as follows:
\begin{align*}
\eta^{\textrm{AFW}}_{t} &\gets \textrm{argmin}_{\eta\in[0,\lambda_t]}f\left({\X_t + \frac{\eta}{1-\eta}\left({\X_t -\v_{t,-}\v_{t,-}^{\top}}\right)}\right),\\
\X_{t+1}^{\textrm{AFW}} &\gets \X_t + \frac{\eta^{\textrm{AFW}}_{t}}{1-\eta^{\textrm{AFW}}_{t}}\left({\X_t -\v_{t,-}\v_{t,-}^{\top}}\right).
\end{align*}
Thus, we can rewrite these updates to have the same form as those for the standard FW update:
\begin{align*}
\tilde{\eta}^{\textrm{AFW}}_{t} &\gets \textrm{argmin}_{\tilde{\eta}\in[0,\frac{\lambda_t}{1-\lambda_t}]}f\left({\X_t + \tilde{\eta}\left({\X_t -\v_{t,-}\v_{t,-}^{\top}}\right)}\right),\\
\X_{t+1}^{\textrm{AFW}} &\gets \X_t + \tilde{\eta}^{\textrm{AFW}}_{t}\left({\X_t -\v_{t,-}\v_{t,-}^{\top}}\right).
\end{align*}
Such line-search computations are standard in FW algorithms \cite{jaggi2013revisiting, lacoste2015global}. In particular, when $f$ is quadratic these admit a simple closed-form solution. In other cases, they amount to the minimization of a 1-dimensional convex function over a closed interval which can be done efficiently via bisection. Moreover, since our algorithm already requires the knowledge of the smoothness constant $\beta$, both of these computations could be replaced with performing a line search w.r.t. the local quadratic upper-bound of $f$:
\begin{align*}
\eta^{\textrm{FW}} &\gets \textrm{argmin}_{\eta\in[0,1]}f(\X_t) + \eta\langle{\v_{t,+}\v_{t,+}^{\top}-\X_t,\nabla{}f(\X_t)}\rangle + \frac{\eta^2\beta}{2}\Vert{\v_{t,+}\v_{t,+}^{\top}-\X_t}\Vert_F^2,\\
\eta^{\textrm{AFW}} &\gets \textrm{argmin}_{\eta\in[0,\frac{\lambda_t}{1-\lambda_t}]}f(\X_t) + \eta\langle{\X_t -\v_{t,-}\v_{t,-}^{\top},\nabla{}f(\X_t)}\rangle + \frac{\eta^2\beta}{2}\Vert{\X_t - \v_{t,-}\v_{t,-}^{\top}}\Vert_F^2.
\end{align*} 
These are quadratic minimization problems and thus admit simple closed-form solutions. In particular, it is a simple observation that using these  computations, instead of those in Algorithm \ref{alg:AFW}, will not change our analysis and all theoretical derivations will remain intact.

\section{Convergence Rate Analysis}\label{sec:analysis}
We turn to formally state and prove our main result --- the linear convergence rate of Algorithm \ref{alg:AFW}. Throughout this section we let $h_t := f(\X_t) - f^*$ denote the approximation error of Algorithm \ref{alg:AFW} on any iteration $t\geq 1$.
\begin{theorem}[convergence of Algorithm \ref{alg:AFW}]\label{thm:main}
The sequence of iterates $(\X_t)_{t\geq 1}$ produced by Algorithm \ref{alg:AFW} satisfies:
\begin{alignat}{2}
&\forall t\geq 1: \qquad  &&h_{t+1}  \leq  h_t. \label{eq:mainthm:res:0}\\
&\forall t\geq \rank(\X_1)+2: \qquad &&h_t  \leq  \frac{8\beta}{t-\rank(\X_1)+4}. \label{eq:mainthm:res:1}
%\forall t\geq 2: \qquad h_t  \leq \min\left\{h_{t-1}, \frac{8\beta}{t-r_1+4}\right\}.
\end{alignat}
Moreover, there exist scalars $\mathcal{E} =O\left({\min\left\{{\frac{\delta^2}{\beta}, \delta, \alpha\lambda_{r^*}^2, \frac{\delta^3}{\beta^2}}\right\} }\right)$, $\mathcal{E}_1 = O\left({\frac{\delta^2}{\beta}}\right)$, $\mathcal{E}_2 = O(\alpha\lambda_{r^*}^2)$ such that on each iteration $t$ in which a \underline{drop step is not taken}:  \\
\noindent{}If $r^* =1$ and $h_t < \mathcal{E}_1$ then
\begin{align}\label{eq:mainthm:res:2}
h_{t+1} \leq h_t\left({1-\min\{\frac{\delta}{12\beta}, \frac{1}{2}\}}\right).
\end{align}
If $r^* =n$ and $h_t < \mathcal{E}_2$ then
\begin{align}\label{eq:mainthm:res:4}
h_{t+1} \leq h_t\left({1-\frac{\alpha}{512\beta{}n}}\right).
\end{align}
If  $n > r^* \geq 2$ and $h_t <  \mathcal{E}$ then, 
\begin{align}\label{eq:mainthm:res:3}
\E[h_{t+1}|\X_t] \leq h_t\left({1-\frac{1}{r^*}\min\left\{{\frac{\delta}{48G}\min\{\frac{\delta}{3\beta},\lambda_{r^*}\}
,~ \frac{\alpha}{512\beta}}\right\}}\right),
%\E_{\u_{t,-}}[h_{t+1}~|~\X_t] \leq h_t\left({1-\frac{1}{12r^*}\min\left\{{\frac{\delta}{G}\min\{\frac{\delta}{9\beta},\lambda_r(\X^*)\}
%,~ \frac{\alpha}{12\beta}}\right\}}\right),
\end{align}
where the expectation is w.r.t. the random choice of $\u_{t,-}$, and $G := \sup_{\X\in\mS^n}\Vert{\nabla{}f(\X)}\Vert$.

Finally, up to any iteration $t$, the overall number of drop steps cannot exceed $(t+\rank(\X_1)-2)/2$.
\end{theorem}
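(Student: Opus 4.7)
The proof decomposes into four tasks: monotonicity \eqref{eq:mainthm:res:0}, the sublinear $O(\beta/t)$ rate \eqref{eq:mainthm:res:1}, the three linear rates \eqref{eq:mainthm:res:2}--\eqref{eq:mainthm:res:3}, and the drop-step count. I would dispatch the last two first, since they are essentially bookkeeping and feed into the $O(1/t)$ analysis. For the drop count, the key observation is rank accounting: by the equality case of Lemma \ref{lem:stepsize} a drop step decreases $\rank(\X_t)$ by exactly one, no single step can decrease the rank by more than one, and every other step type increases rank by at most one (FW adds one rank-one piece, an away step that is not a drop preserves the image, and a pairwise step replaces one rank-one piece by another). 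Letting $D_t$ count drop steps and $U_t$ count rank-increasing steps in iterations $1,\dots,t-1$, feasibility $\rank(\X_t)\geq 1$ forces $D_t\leq U_t+\rank(\X_1)-1$, and since $D_t+U_t\leq t-1$ this yields $2D_t\leq t+\rank(\X_1)-2$. Monotonicity is immediate: a drop step is accepted only when it does not raise $f$, and otherwise $\X_{t+1}$ minimizes $f$ over a set containing $\X_t$ itself (via $\eta_t=0$).

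For \eqref{eq:mainthm:res:1} I would run the classical Jaggi-style FW analysis on the FW branch alone: convexity gives $\langle{\v_{t,+}\v_{t,+}^\top - \X_t,\nabla{}f(\X_t)}\rangle\leq -h_t$, and smoothness together with $\Vert{\v\v^\top-\X_t}\Vert_F^2\leq 4$ on $\mS^n$ yields $f(\X_{t+1}^{\textrm{FW}})\leq f(\X_t)-\eta h_t+2\beta\eta^2$; optimizing $\eta\in[0,1]$ and using $f(\X_{t+1})\leq f(\X_{t+1}^{\textrm{FW}})$ on non-drop iterations produces $h_{t+1}\leq h_t(1-h_t/(8\beta))$. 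Together with $h_{t+1}\leq h_t$ on drop iterations and the drop count above, which guarantees at least $(t-\rank(\X_1))/2$ non-drop iterations through time $t$, a standard inductive argument yields \eqref{eq:mainthm:res:1} with the stated constant.

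The core of the proof is the linear rate, for which I would adapt the Gu\'elat--Marcotte away-step strategy to the spectrahedron by replacing the fixed optimal face (as in polytopes) with a \emph{moving} $r^*$-dimensional face spanned by the bottom $r^*$ eigenvectors of $\nabla{}f(\X_t)$. Once $h_t$ is small, quadratic growth places $\X_t$ close to $\mX^*$, and strict complementarity combined with Davis--Kahan-type eigenspace perturbation forces this moving face to be close to $\{\V^*\bS\V^{*\top}:\bS\in\mS^{r^*}\}$. Letting $\V_t\in\reals^{n\times r^*}$ collect those bottom eigenvectors and $\Pi_t^\parallel$ project onto $\textrm{Im}(\V_t)$, I would split into regimes according to the size of $\trace(\X_t-\Pi_t^\parallel\X_t\Pi_t^\parallel)$. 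In the aligned regime (small misalignment trace), $\X_t$ is effectively supported on the moving face and behaves like an unconstrained problem on it; a Gu\'elat--Marcotte-style dichotomy forces either the FW direction or the away direction to have inner product with $\nabla{}f(\X_t)$ of magnitude $\Omega(h_t)$, and smoothness plus line-search convert this into a constant-factor reduction of $h_t$ whose size is controlled by $\delta/\beta$ (FW branch) or $\alpha/\beta$ (AFW branch). In the misaligned regime (large misalignment trace), $\X_t$ places substantial mass outside the face; since $\u_{t,-}$ is uniform on the unit sphere of $\textrm{Im}(\X_t)$, an $\Omega(1/r_t)$ fraction of that ``bad'' mass is hit in expectation, and substituting it by the proximal vertex $\u_{t,+}$ from \eqref{eq:pws} --- whose analysis exploits the closed form $\gamma_t=(\u_{t,-}^\top\X_t^\dagger\u_{t,-})^{-1}$ together with smoothness --- reduces $h_t$ by $\Omega(1/r^*)$ in expectation. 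Strict complementarity caps $r_t\leq r^*$ in this regime for $r^*<n$, which produces the $1/r^*$ factor in \eqref{eq:mainthm:res:3}, the $1/n$ factor in \eqref{eq:mainthm:res:4}, and a simpler argument for \eqref{eq:mainthm:res:2} in which $r^*=1$ makes the misaligned regime vacuous.

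The main obstacle I anticipate is calibrating the threshold $\mathcal{E}$ so that the two regimes are simultaneously exhaustive and deliver matching rates. Each of $\delta$, $\lambda_{r^*}$, $\alpha$ enters the argument at a distinct point: $\delta$ drives eigenspace perturbation and the FW-gap lower bound in the aligned regime; $\lambda_{r^*}$ lower-bounds the mass actually available to be removed by away and pairwise moves; and $\alpha$ converts function-value closeness to $\mX^*$ into iterate closeness through quadratic growth. Balancing these three constants inside a single threshold is precisely what forces the complicated minimum in $\mathcal{E}=O(\min\{\alpha\delta^2/\beta^2,\delta^2/\beta,\delta,\alpha\lambda_{r^*}^2,\delta^3/\beta^2\})$, and it is here that I expect most of the technical pain of the analysis to lie.
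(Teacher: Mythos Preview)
Your proposal is essentially the paper's proof: rank accounting for the drop-step count, the standard FW analysis on non-drop iterations for the sublinear rate, and for the linear rate a moving $r^*$-dimensional face given by the bottom eigenspace of $\nabla f(\X_t)$ with a two-regime split on $\trace(\P_t^{\perp}\X_t)$, invoking FW/away steps in the aligned regime and the randomized pairwise step (via the expectation bound $\E_t[\Vert\P_t^{\perp}\u_{t,-}\Vert^2]\geq\trace(\P_t^{\perp}\X_t)/r_t$) in the misaligned one. Two points to tighten when you write it up. First, in the aligned regime the relevant gap scales like $\sqrt{\alpha h_t/r^*}$, not $\Omega(h_t)$: the paper proves a PL-type \emph{in-face eigengap} bound $\lambda_{n-r^*+1}(\nabla_t)-\lambda_n(\nabla_t)\geq\sqrt{\alpha h_t/(8r^*)}$ (Lemma \ref{lem:eigengap}), and it is the \emph{square} of this quantity, after smoothness and line-search (Lemma \ref{lem:FWAFWstep}), that delivers the constant-factor reduction $\alpha h_t/(512\beta r^*)$; your ``$\Omega(h_t)$ inner product $\Rightarrow$ constant-factor reduction'' is internally inconsistent as written. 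Second, the cap $r_t\leq r^*$ on non-drop iterations is not a direct consequence of strict complementarity but of the \emph{priority given to drop steps}: Lemma \ref{lem:dropstep} shows that once $h_t$ is small enough to guarantee $\delta_t\geq\delta/2$, any iterate with $\rank(\X_t)>r^*$ triggers a successful drop, so non-drop iterations must have $r_t\leq r^*$; the matching lower bound $r_t\geq r^*$ comes from quadratic growth forcing $\lambda_{r^*}(\X_t)\geq\lambda_{r^*}/2$, and together these pin $r_t=r^*$ exactly, which is what makes both Lemma \ref{lem:FWAFWstep} (needs $r_t\geq r^*$ for the Poincar\'e argument and a lower bound on $\lambda_{r_t}(\X_t)$) and the $1/r^*$ factor from Lemma \ref{lem:expectedProj} go through.
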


Before proving the theorem let us make a few comments. Result \eqref{eq:mainthm:res:0} follows in a straightforward manner from the design of the algorithm. Results \eqref{eq:mainthm:res:1}, \eqref{eq:mainthm:res:2} follow essentially from already known analyses of the Frank-Wolfe method \cite{jaggi2013revisiting} and \cite{garber2023linear}. The main novel result is the (expected) linear convergence rate in the case $n > r^* \geq 2$ given in Eq. \eqref{eq:mainthm:res:3}. Note that the rate in  \eqref{eq:mainthm:res:3} depends explicitly on the rank of optimal solutions $r^*$, which is well-known to be unavoidable in worst case, see for instance \cite{jaggi2011convex}. Also note that the $\frac{\alpha}{\beta}$ term inside the min in the RHS of \eqref{eq:mainthm:res:3} corresponds (up to a universal constant) to the standard linear convergence rate of (unaccelerated) gradient methods, such as the block Frank-Wolfe method \cite{allen2017linear}.  Finally, Result \eqref{eq:mainthm:res:4} for the case $r^* =n$, i.e., the optimal solutions lie in the relative interior of $\mS^n$, follows as a simplified case of  Result  \eqref{eq:mainthm:res:3}.

\subsection{Proof of Theorem \ref{thm:main}}

%The proofs of Results \eqref{eq:mainthm:res:0}, \eqref{eq:mainthm:res:1} and \eqref{eq:mainthm:res:2} follow essentially from the simple observations that I. the approximation error of Algorithm \ref{alg:AFW} never increases,  and II. that each iteration which is not a drop step  reduces the objective function value by at least the amount that a standard Frank-Wolfe step with line-search will reduce. Then, \eqref{eq:mainthm:res:1}  follows from  classical analysis of Frank-Wolfe, e.g., \cite{jaggi2013revisiting}, and \eqref{eq:mainthm:res:2} follows from the analysis for the case $r^*=1$ given in \cite{garber2023linear}. Thus, the majority of the following analysis is devoted to the proof of Result \eqref{eq:mainthm:res:3} which is the main novelty. 

\begin{center}
%\begin{table}
\begin{tabular}{ |p{1.4cm}|p{10.7cm}|}
%\begin{tabular}{ |l|l| }
  \hline
  \multicolumn{2}{|c|}{Additional notation for the proof of Theorem \ref{thm:main}} \\
  \hline
  $\nabla_t$ & $\nabla{}f(\X_t)$ \\
  %$h_t$ & $f(\X_t) - f^*$ \\
  $\delta_t$ & $\lambda_{n-r^*}(\nabla{}f(\X_t)) - \lambda_{n-r^*+1}(\nabla{}f(\X_t))$ (when $r^* < n$)\\
  $\delta_{t,r^*}$ & $\lambda_{n-r^*+1}(\nabla{}f(\X_t)) - \lambda_{n}(\nabla{}f(\X_t))$ \\  
  $r_t$ & $\rank(\X_t)$\\
  $\P_t\in\mbS^n_{+}$ & projection matrix onto span of $r^*$ eigenvectors of $\nabla_t$ corresponding to smallest eigenvalues of $\nabla_t$ \\
  $\P_t^{\perp}\in\mbS^n_{+}$ & projection matrix onto span of $n-r^*$ eigenvectors of $\nabla_t$ corresponding to largest eigenvalues of $\nabla_t$. If $r^*=n$ set $\P_t^{\perp} = \mathbf{0}_{n\times n}$\\
  $\E_t[\cdot]$ & $\E_{\u_{t,-}}[\cdot|\X_t]$ --- the expectation w.r.t. $\u_{t,-}$ conditioned on $\X_t$ \\
  \hline
\end{tabular}
%\caption{Notation for the proof of Theorem \ref{thm:main}}
%\end{table}
\end{center}

\vspace{8pt}

\paragraph*{Roadmap for the proof:}  As mentioned above, the most challenging part is the proof of Result  \eqref{eq:mainthm:res:3}.  We consider the run of Algorithm \ref{alg:AFW} in two phases. The first one is a finite burn-in phase, before the strict complementarity condition has been ``detected'', and the standard FW steps are used to drive a provable sublinear convergence rate (Result \eqref{eq:mainthm:res:1}). Once the approximation error $h_t$ reaches a critical value (which naturally scales with the strict complementarity measure $\delta$), we enter the second phase of the run in which the convergence is linear, in expectation. In this second phase, the algorithm automatically adapts to the optimal rank $r^*$ --- on each iteration $t$ in which a drop step is not taken, it must hold that $\rank(\X_t) \leq r^*$ (Lemma \ref{lem:dropstep}). On each such iteration we then consider two cases. In the first case, the current iterate $\X_t$ is not sufficiently aligned with the principal subspace of the gradient $\nabla{}f(\X_t)$, in the sense that $\trace(\P_t^{\perp}\X_t) > c{}h_t$ for some constant $c$. In this case, Lemmas \ref{lem:pfwstep} and \ref{lem:expectedProj} guarantee that a pairwise step will reduce the approximation error, in expectation, by $\Omega(h_t)$ (compatible with a linear convergence rate, in expectation). Here it is important that $\rank(\X_t) \leq r^*$ which, together with the quadratic growth property, guarantees the pairwise step can be taken with sufficiently large step-size. 
In the other case, the iterate $\X_t$ is sufficiently aligned with the principal subspace of $\nabla{}f(\X_t)$, and we can use the quadratic growth of $f$ to establish a Polyak-\L{}ojasiewicz-type (PL) inequality (a well-known property used to derive linear convergence rates for first-order methods) on the face of $\mS^n$ corresponding to this subspace (Lemma \ref{lem:eigengap}). With this inequality in place, we can establish that either  a standard FW step or an away step will lead to a $\Omega(h_t)$ (deterministic) reduction in approximation error, which is again compatible with a linear convergence rate.

%The proof of Theorem \ref{thm:main} is  constructed from several lemmas that analyze the benefit of each one of the steps employed by Algorithm \ref{alg:AFW}. It is then established how their combination yields the theorem. 

The following lemma establishes conditions under which a drop step will be taken.
\begin{lemma}[drop step]\label{lem:dropstep}
Fix an iteration $t$ of Algorithm \ref{alg:AFW}  for which  $h_t < \min\{\frac{\delta_t^2}{18\beta}, \frac{\delta_t}{6}\}$. If $\rank(\X_t) > r^*$, then  $f(\X^{\textrm{drop}}_{t+1}) \leq f(\X_t)$.
\end{lemma}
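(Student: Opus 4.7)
The plan is to bound $f(\X^{\textrm{drop}}_{t+1})-f(\X_t)$ using $\beta$-smoothness and reduce to showing a clean scalar inequality between the ``away gap'' $\Delta_t := \v_{t,-}^\top\nabla_t\v_{t,-} - \langle\X_t,\nabla_t\rangle \geq 0$ and the step-size $\lambda_t$. Concretely, using the identity $\X^{\textrm{drop}}_{t+1}-\X_t = \frac{\lambda_t}{1-\lambda_t}(\X_t-\v_{t,-}\v_{t,-}^\top)$, $\beta$-smoothness yields
\begin{equation*}
f(\X^{\textrm{drop}}_{t+1}) - f(\X_t) \leq -\frac{\lambda_t}{1-\lambda_t}\Delta_t + \frac{\beta\lambda_t^2}{2(1-\lambda_t)^2}\|\X_t-\v_{t,-}\v_{t,-}^\top\|_F^2.
\end{equation*}
Combining $\|\X_t\|_F^2\leq\trace(\X_t)=1$ with the PSD Cauchy--Schwarz inequality $(\v^\top\X\v)(\v^\top\X^\dagger\v)\geq 1$ for unit $\v\in\textrm{Im}(\X)$, which gives $\v_{t,-}^\top\X_t\v_{t,-}\geq\lambda_t$ and hence $\|\X_t-\v_{t,-}\v_{t,-}^\top\|_F^2\leq 2(1-\lambda_t)$, the sufficient condition reduces to $\Delta_t \geq \beta\lambda_t$.

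Next I would establish a clean lower bound on $\Delta_t$. Since $r_t = \rank(\X_t) > r^*$, dimension counting gives $\dim(\textrm{Im}(\X_t)\cap\textrm{Im}(\P_t^\perp))\geq r_t-r^*\geq 1$, so any unit vector in the intersection is a feasible competitor for $\v_{t,-}$ and yields $\v_{t,-}^\top\nabla_t\v_{t,-} \geq \lambda_{n-r^*}(\nabla_t)$. Writing the convex decomposition $\X_t = \lambda_t\v_{t,-}\v_{t,-}^\top + (1-\lambda_t)\X^{\textrm{drop}}_{t+1}$ (both summands lie in $\mS^n$ by Lemma~\ref{lem:stepsize}) and using $\langle\X^{\textrm{drop}}_{t+1},\nabla_t\rangle \geq \lambda_n(\nabla_t)$ (feasibility), a short calculation gives
\begin{equation*}
\Delta_t = (1-\lambda_t)\bigl(\v_{t,-}^\top\nabla_t\v_{t,-} - \langle\X^{\textrm{drop}}_{t+1},\nabla_t\rangle\bigr) \geq (1-\lambda_t)\,\delta_t.
\end{equation*}
Thus the sufficient condition $\Delta_t\geq\beta\lambda_t$ further reduces to $\lambda_t\leq\delta_t/(\delta_t+\beta)$.

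The main obstacle is the upper bound on $\lambda_t$. The same convex decomposition implies the Frank--Wolfe gap satisfies $G_t := \langle\X_t,\nabla_t\rangle - \lambda_n(\nabla_t) \geq \lambda_t\delta_t$; on the other hand, a single line-searched Frank--Wolfe step from $\X_t$ would decrease $f$ by at least $G_t^2/(4\beta)$ (by $\beta$-smoothness and $\|\X_t-\v_{t,+}\v_{t,+}^\top\|_F^2\leq 2$), and cannot produce a point with value below $f^*$, so $G_t\leq 2\sqrt{\beta h_t}$. Combined, $\lambda_t\leq 2\sqrt{\beta h_t}/\delta_t$. The delicate step is then the algebraic verification that, under the two hypotheses $h_t<\delta_t^2/(18\beta)$ and $h_t<\delta_t/6$, this bound on $\lambda_t$ implies $\lambda_t\leq\delta_t/(\delta_t+\beta)$. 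The two branches of the minimum are calibrated precisely to cover the complementary regimes $\delta_t \lesssim \beta$ (where $\delta_t^2/(18\beta)$ binds) and $\delta_t \gtrsim \beta$ (where $\delta_t/6$ binds).
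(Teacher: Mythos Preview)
Your smoothness reduction and the bound $\|\X_t-\v_{t,-}\v_{t,-}^\top\|_F^2\leq 2(1-\lambda_t)$ are fine, and the inequalities $\Delta_t\geq(1-\lambda_t)\delta_t$ and $G_t\geq\lambda_t\delta_t$ are correct. The gap is in the last step: the claimed ``algebraic verification'' does not close in the regime $\delta_t\ll\beta$. From $\lambda_t\leq 2\sqrt{\beta h_t}/\delta_t$ and $h_t<\delta_t^2/(18\beta)$ you only get $\lambda_t<\sqrt{2}/3$, a \emph{constant} independent of $\delta_t/\beta$; the second hypothesis $h_t<\delta_t/6$ does no better here. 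But the target $\lambda_t\leq\delta_t/(\delta_t+\beta)$ is of order $\delta_t/\beta$ when $\delta_t\ll\beta$, so the implication simply fails (take e.g.\ $\delta_t=\beta/10$, where you need $\lambda_t\leq 1/11$). The problem is structural: bounding $\lambda_t$ through the Frank--Wolfe gap $G_t$ loses exactly the factor of $\delta_t/\beta$ you need, because the chain $\lambda_t\delta_t\leq G_t\leq 2\sqrt{\beta h_t}$ combined with $h_t\lesssim\delta_t^2/\beta$ cancels the $\delta_t$'s.

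The paper obtains the missing scaling by a different contradiction argument that uses an \emph{away} step rather than a Frank--Wolfe step. After establishing $\Delta_t\geq\delta_t/2$ (via essentially your FW-gap argument applied to $\langle\X_t,\nabla_t\rangle$), it supposes $\lambda_t/(1-\lambda_t)>\delta_t/(3\beta)$ and considers the \emph{shorter} away step with $c'<\lambda_t$ chosen so that $c'/(1-c')=\delta_t/(3\beta)$; this is feasible by Lemma~\ref{lem:stepsize}, and plugging into the same smoothness inequality gives a reduction of exactly $\delta_t^2/(18\beta)>h_t$, which is impossible. This yields $\lambda_t/(1-\lambda_t)\leq\delta_t/(3\beta)$ directly, with the correct $\delta_t/\beta$ dependence. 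You can repair your argument along the same lines: replace the FW-gap bound on $\lambda_t$ by this away-step contradiction.
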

\begin{proof}
Recall that $\X_{t+1}^{\textrm{drop}} = \frac{1}{1-c}(\X_t-c\v\v^{\top})$ for  $c = (\v^{\top}\X_t^{\dagger}\v)^{-1}$ and  that $\v\in\textrm{argmax}\{\v^{\top}\nabla_t\v~|~\v\in\textrm{Im}(\X_t), \Vert{\v}\Vert = 1\}$. Using the smoothness of $f$ we have that,
\begin{align}\label{eq:lem:dropstep:1}
f(\X_{t+1}^{\textrm{drop}}) &\leq f(\X_t) + \langle{\frac{1}{1-c}(\X_t-c\v\v^{\top}) - \X_t, \nabla_t}\rangle + \frac{\beta}{2}\Vert{\frac{1}{1-c}(\X_t-c\v\v^{\top}) - \X_t}\Vert_F^2 \nonumber \\
&\leq f(\X_t)  + \frac{c}{1-c}\langle{\X_t-\v\v^{\top}, \nabla_t}\rangle + \frac{\beta{}c^2}{(1-c)^2}.
\end{align}
Since $\dim\textrm{Im}(\X_t) \geq r^*+1$, it follows from the Poincar\'{e} inequality (e.g., Theorem 4.4 in \cite{calafiore2014optimization}) that $\v^{\top}\nabla_t\v \geq \lambda_{n-r^*}(\nabla_t) = \lambda_{n-r^*+1}(\nabla_t)+\delta_t$\footnote{Poincar\'{e} inequality states that for any $\A\in\mbS^n$ and a subspace $\mV\subseteq\reals^n$ of dimension $k$, there must exist a unit vector $\w\in\mV$ such that $\w^{\top}\A\w \geq \lambda_{n-k+1}(\A)$.\label{footnote:pi}}. On the other hand, we argue that $\langle{\X_t,\nabla_t}\rangle \leq  \lambda_{n-r^*+1}(\nabla_t)+\delta_t/2$. To see why the latter is true, let us assume by way of contradiction that it is not, and  let us denote the matrix $\Y_{\eta} = (1-\eta)\X_t + \eta\v_{t,+}\v_{t,+}^{\top}$ for some $\eta\in[0,1]$ to be specified later, and $\v_{t,+}$ as defined in Algorithm \ref{alg:AFW}. Using the smoothness of $f$ again we have that,
\begin{align*}
f(\Y_{\eta}) - f^* &\leq f(\X_t) + \eta\langle{\v_{t,+}\v_{t,+}^{\top}-\X_t,\nabla_t}\rangle + \frac{\beta\eta^2}{2}\Vert{\v_{t,+}\v_{t,+}^{\top}-\X_t}\Vert_F^2 - f^* \\
&\leq f(\X_t) - f^* + \eta\left({\lambda_n(\nabla_t) -  \lambda_{n-r^*+1}(\nabla_t) -\delta_t/2}\right) + \eta^2\beta \\
&\leq f(\X_t) - f^* - \eta\left({\frac{\delta_t}{2} - \eta\beta}\right),
\end{align*}
where the middle inequality is due to the definition of $\v_{t,+}$ in the algorithm and our assumption by way of contradiction that $\langle{\X_t,\nabla_t}\rangle >  \lambda_{n-r^*+1}(\nabla_t)+\delta_t/2$.

Setting $\eta = \min\{\frac{\delta_t}{3\beta}, 1\}$ yields then
\begin{align*}
f(\Y_{\eta}) - f^* \leq f(\X_t) - f^* - \min\{\frac{\delta_t}{3\beta}, 1\}\frac{\delta_t}{6}.
\end{align*}
Now we see that  if $h_t < \min\{\frac{\delta_t^2}{18\beta}, \frac{\delta_t}{6}\}$, as the lemma assumes,  we indeed get a contradiction.
%Now, if $\frac{\delta}{3\beta} \leq 1$, setting $\gamma = \frac{\delta}{3\beta}$ and recalling the assumption of the lemma that $h_t < \frac{\delta_t^2}{18\beta}$ yields that $f(\Y_{\gamma}) < f^*$, which is impossible. Otherwise, if $\frac{\delta}{3\beta} > 1$, setting $\gamma =1$ and recalling the assumption that $h_t < \delta_t/6$, again leads to a contradiction. 
%Thus, we can conclude that indeed $\langle{\X_t,\nabla_t}\rangle \leq \lambda_{n-r^*+1}+\delta_t/2$.

Going back to \eqref{eq:lem:dropstep:1}, we can now write
\begin{align}\label{eq:lem:dropstep:2}
f(\X_{t+1}^{\textrm{drop}}) -f^* \leq f(\X_t) - f^* - \frac{\delta_tc}{2(1-c)} + \frac{\beta{}c^2}{(1-c)^2}.
\end{align}
Observe that if $\frac{c}{1-c} \leq \frac{\delta_t}{2\beta}$, then indeed $f(\X_{t+1}^{\textrm{drop}}) \leq f(\X_t)$. Note furthermore that it must be the case that $\frac{c}{1-c} \leq \frac{\delta_t}{3\beta}$: the function $g(c)=\frac{c}{1-c}$ is monotone increasing on the interval $[0,1)$, and thus we can in principle replace the scalar $c$ with some $c' \leq c$ such that $\frac{c'}{1-c'} < \frac{\delta_t}{3\beta}$. Denoting the resulting matrix by $\X'_{t+1} = \frac{1}{1-c'}(\X_t - c'\v\v^{\top})$ and replacing $c$ with $c'$ in \eqref{eq:lem:dropstep:2} will then yield, 
\begin{align*}
f(\X'_{t+1}) -f^* \leq f(\X_t) - f^* - \frac{\delta_t^2}{18\beta}.
\end{align*}
However, using the assumption of the lemma that $h_t < \frac{\delta_t^2}{18\beta}$, this would lead to a contradiction. 
Thus, we can conclude that indeed $f(\X_{t+1}^{\textrm{drop}}) \leq f(\X_t)$.
\end{proof}

%\begin{lemma}[Pairwise step I]
%Let $\v\in\textrm{Im}(\X_t)$ and consider the unit vector $\v'$ such that
%\begin{align*}
%\v'  \in\arg\min_{\u}\{\u^{\top}\nabla_t\u + \frac{\beta\lambda_t}{2}\Vert{\u\u^{\top}-\v\v^{\top}}\Vert_F^2~|~ \Vert{\u}\Vert=1\}.
%\end{align*} 
%and consider the matrix $\Y_{t+1} = \X_{t+1} + \eta_t(\v'\v'^{\top} - \v\v^{\top})$. Then,
%\begin{align*}
%f(\Y_{t+1}) \leq f(\X_t) - \eta_t\Vert{\P_t^{\perp}\v}\Vert^2\left({\delta_t-\eta_t\beta}\right).
%\end{align*}
%\end{lemma}
%\begin{proof}
%...
%\end{proof}

Since drop steps, as analyzed in the previous lemma, do not necessarily lower the objective value by a bounded amount, we have the following simple observation that upper-bounds the number of such steps possible until any iteration $t\geq 1$.

\begin{observation}\label{obsrv:dropsteps}
The overall number of drop steps up to the beginning of any iteration $t$ of Algorithm \ref{alg:AFW} cannot be larger than $(t + \rank(\X_1) - 2)/2$.\end{observation}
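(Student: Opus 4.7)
The proof is a rank-accounting argument: I would track $\rank(\X_t)$ across iterations and bound the number of drop steps by the fact that rank is nonnegative integer and is bounded by $n$ (indeed, always at least $1$ because the unit trace constraint rules out $\X_t = 0$).

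First I would verify the per-iteration effect on the rank for each of the four possible step types taken by Algorithm \ref{alg:AFW}. A drop step uses exactly $\lambda_t = (\v_{t,-}^{\top}\X_t^{\dagger}\v_{t,-})^{-1}$, so by the second part of Lemma \ref{lem:stepsize}, $\rank(\X_{t+1}) \le \rank(\X_t) - 1$. A standard Frank-Wolfe step writes $\X_{t+1}$ as a convex combination of $\X_t$ and a rank-one PSD matrix, so $\rank(\X_{t+1}) \le \rank(\X_t) + 1$. A non-drop away step uses $\eta_t^{\textrm{AFW}} < \lambda_t$; the matrix $\X_t - \eta_t^{\textrm{AFW}}\v_{t,-}\v_{t,-}^{\top}$ is PSD (Lemma \ref{lem:stepsize}) with image contained in $\textrm{Im}(\X_t)$, and rescaling by $(1-\eta_t^{\textrm{AFW}})^{-1}$ preserves rank, so $\rank(\X_{t+1}) \le \rank(\X_t)$. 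Finally, a pairwise step can be viewed as first subtracting $\gamma_t \u_{t,-}\u_{t,-}^{\top}$ with the maximal feasible $\gamma_t = (\u_{t,-}^{\top}\X_t^{\dagger}\u_{t,-})^{-1}$, which by Lemma \ref{lem:stepsize} drops the rank by exactly one, and then adding the rank-one PSD matrix $\gamma_t \u_{t,+}\u_{t,+}^{\top}$, which adds at most one. Hence pairwise steps are also rank-nonincreasing.

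Second, I would do the bookkeeping. Let $d$ be the number of drop steps among iterations $1,\dots,t-1$, let $f$ be the number of Frank-Wolfe steps, and let $a+p$ be the number of (non-drop) away and pairwise steps taken in that range, so that $d + f + a + p = t - 1$. Summing the per-step rank bounds above gives
\begin{align*}
\rank(\X_t) \;\le\; \rank(\X_1) + f\cdot 1 + d\cdot(-1) + (a+p)\cdot 0 \;=\; \rank(\X_1) + f - d.
\end{align*}
Using $f \le t - 1 - d$ and the fact that $\rank(\X_t) \ge 1$ (since $\trace(\X_t) = 1$ forces $\X_t \neq 0$), I obtain
\begin{align*}
1 \;\le\; \rank(\X_1) + (t-1-d) - d,
\end{align*}
which rearranges to $d \le (t + \rank(\X_1) - 2)/2$, exactly the claimed bound.

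No serious obstacle is expected; the only point that needs care is confirming that non-drop away steps and pairwise steps do not increase the rank, which follows cleanly from Lemma \ref{lem:stepsize} together with the identification of $\gamma_t$ as the maximal feasible step size in the pairwise update.
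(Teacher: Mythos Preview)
Your proof is correct and follows essentially the same rank-accounting argument as the paper: each drop step reduces the rank by at least one while every other step increases it by at most one, and combining this with $\rank(\X_t)\ge 1$ yields exactly the inequality $1 \le \rank(\X_1) - d + (t-1-d)$. Your per-step analysis is a bit more refined than the paper's (you separately argue that away and pairwise steps are rank-nonincreasing), but this extra precision is not needed for the stated bound.
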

\begin{proof}
According to Lemma \ref{lem:stepsize}, each drop step reduces the rank of the resulting matrix. On the other hand, all other steps increase the rank by at most one.  Thus, denoting the number of drop steps up to the beginning of some iteration $t$ by $t_{\textrm{drop}}$, it must hold that
\begin{align*}
1 \leq \rank(\X_t) \leq \rank(\X_1) - t_{\textrm{drop}} + (t-1 -  t_{\textrm{drop}}),
\end{align*}
which yields the desired bound.

%Thus, up to any iteration $t$ (not including the update from $\X_t$ to $\X_{t+1}$), the number of drop steps, denoted by $t_{\textrm{drop}}$, must  satisfy
%\begin{align}
%t_{\textrm{drop}} \leq \frac{r_1+t-2}{2}.
%\end{align}  
\end{proof}

The following lemma (deterministically) bounds the reduction in function value due to a pairwise step. It is complemented by Lemma \ref{lem:expectedProj} which further lower-bounds the expectation of the random variable  $\Vert{\P_t^{\perp}\u_{t,-}}\Vert^2$.
\begin{lemma}[pairwise step]\label{lem:pfwstep}
Fix some iteration $t$ of Algorithm \ref{alg:AFW} for which it holds that $h_t \leq  \min\{\frac{\delta_t^3}{24\beta^2}, \frac{\delta_t^2}{12\beta}\}$. Then,
\begin{align*}
f(\X^{\textrm{PFW}}_{t+1})  \leq f(\X_t) - \min\{\frac{\delta_t^2}{9\beta},\frac{\gamma_t\delta_t}{3}\}\Vert{\P_t^{\perp}\u_{t,-}}\Vert^2.
%f(\X_{t+1}) -f^* \leq f(\X_t)-f^* - \eta_t\Vert{\P_t^{\perp}\v_{t,-}}\Vert^2\left({\delta_t-\eta_t\beta}\right).
\end{align*}
\end{lemma}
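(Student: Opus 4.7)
My plan is to start from $\beta$-smoothness applied to the pairwise update $\X_{t+1}^{\textrm{PFW}}=\X_t+\gamma_t(\u_{t,+}\u_{t,+}^{\top}-\u_{t,-}\u_{t,-}^{\top})$ and use $\Vert\u\u^{\top}-\u_{t,-}\u_{t,-}^{\top}\Vert_F^2=2(1-(\u^{\top}\u_{t,-})^2)$ to write
\[
f(\X_{t+1}^{\textrm{PFW}})-f(\X_t)\leq \gamma_t\bigl[\u_{t,+}^{\top}\nabla_t\u_{t,+}-\u_{t,-}^{\top}\nabla_t\u_{t,-}+\beta\gamma_t(1-(\u_{t,+}^{\top}\u_{t,-})^2)\bigr].
\]
The defining property of $\u_{t,+}$ as the unit minimizer of $\u\mapsto\u^{\top}\nabla_t\u+\beta\gamma_t(1-(\u^{\top}\u_{t,-})^2)$ then lets me substitute \emph{any} unit test vector $\u$ in the bracket. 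Writing $a:=\Vert\P_t^{\perp}\u_{t,-}\Vert^2$, the only eigengap property I need is that the spectral decomposition $\nabla_t=\P_t\nabla_t\P_t+\P_t^{\perp}\nabla_t\P_t^{\perp}$ combined with $\lambda_{n-r^*}(\nabla_t)\geq\lambda_{n-r^*+1}(\nabla_t)+\delta_t$ yields the standing lower bound $\u_{t,-}^{\top}\P_t^{\perp}\nabla_t\P_t^{\perp}\u_{t,-}\geq(\lambda_{n-r^*+1}(\nabla_t)+\delta_t)a$. The work is then entirely in the choice of $\u$.

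For the branch $-\tfrac{\gamma_t\delta_t a}{3}$ of the minimum (active once $\beta\gamma_t\leq\tfrac{2}{3}\delta_t$), I plug in $\u^{(0)}:=\P_t\u_{t,-}/\Vert\P_t\u_{t,-}\Vert$. Since $(\u^{(0)})^{\top}\nabla_t\u^{(0)}\leq\lambda_{n-r^*+1}(\nabla_t)$ and $(\u^{(0)\top}\u_{t,-})^2=1-a$, a direct telescoping simplifies the bracket to exactly $a(\beta\gamma_t-\delta_t)\leq -\tfrac{1}{3}a\delta_t$, giving decrease $\leq -\tfrac{\gamma_t\delta_t a}{3}$.

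For the branch $-\tfrac{\delta_t^2 a}{9\beta}$ (dominating once $\beta\gamma_t>\tfrac{2}{3}\delta_t$), the penalty $\beta\gamma_t(1-(\u^{\top}\u_{t,-})^2)$ is too large to use $\u^{(0)}$ directly, so I instead use the scaled interpolation $\hat\u(\lambda):=(\u_{t,-}+\lambda\u^{(0)})/\Vert\u_{t,-}+\lambda\u^{(0)}\Vert$ for $\lambda>0$. Using $\Vert\u_{t,-}+\lambda\u^{(0)}\Vert^2=1+2\lambda\sqrt{1-a}+\lambda^2$ together with the identity $(\u_{t,-}+\lambda\u^{(0)})^{\top}\nabla_t(\u_{t,-}+\lambda\u^{(0)})=(\sqrt{1-a}+\lambda)^2(\u^{(0)})^{\top}\nabla_t\u^{(0)}+\u_{t,-}^{\top}\P_t^{\perp}\nabla_t\P_t^{\perp}\u_{t,-}$, the bracket simplifies to an upper bound
\[
\frac{a\lambda\bigl[\lambda(\beta\gamma_t-\delta_t)-2\sqrt{1-a}\,\delta_t\bigr]}{1+2\lambda\sqrt{1-a}+\lambda^2}.
\]
The numerator is a parabola in $\lambda$ whose minimum occurs at $\lambda^*=\sqrt{1-a}\,\delta_t/(\beta\gamma_t-\delta_t)$ with value $-a(1-a)\delta_t^2/(\beta\gamma_t-\delta_t)$; since the denominator is bounded by an absolute constant in this regime (because $\lambda^*\leq\sqrt{1-a}\leq 1$), multiplying by $\gamma_t$ yields decrease $\leq-\Theta(a(1-a)\delta_t^2/\beta)$.

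The main obstacle is closing the explicit constant $1/9$ in the large-$\gamma_t$ branch, since the residual $(1-a)$ factor must be absorbed into a universal constant. I plan to use the hypothesis $h_t\leq\min\{\delta_t^3/(24\beta^2),\delta_t^2/(12\beta)\}$, in concert with quadratic growth (Assumption~\ref{ass:qg}) and strict complementarity (Assumption~\ref{ass:sc}), to deduce via a Davis--Kahan-style argument in the spirit of Lemma~\ref{lem:dropstep} that $\textrm{Im}(\X_t)$ lies within a small subspace angle of $\textrm{Im}(\P_t)$, forcing $a\leq 8/9$ for every unit $\u_{t,-}\in\textrm{Im}(\X_t)$ (and a fortiori making $\u^{(0)}$ well defined for the first branch). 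Once this is in hand, $(1-a)\geq 1/9$ closes the constant; the remaining algebra -- tracking the numerical constants through the parabola optimization in $\lambda$ and through the normalization $N^2=1+2\lambda^*\sqrt{1-a}+(\lambda^*)^2$ -- is tedious but purely mechanical.
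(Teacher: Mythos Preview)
Your overall route --- bound $f(\X_{t+1}^{\textrm{PFW}})$ via smoothness, use optimality of $\u_{t,+}$ to plug in test vectors from the plane through $\P_t\u_{t,-}$ and $\P_t^{\perp}\u_{t,-}$ --- matches the paper. The genuine gap is your plan to force $a=\Vert\P_t^{\perp}\u_{t,-}\Vert^2\leq 8/9$ via Davis--Kahan together with Assumptions~\ref{ass:qg} and~\ref{ass:sc}. The lemma's hypothesis $h_t\leq\min\{\delta_t^3/(24\beta^2),\delta_t^2/(12\beta)\}$ involves only the \emph{current} gap $\delta_t$ and $\beta$; it contains neither $\alpha$ nor $\lambda_{r^*}$, so quadratic growth (which turns $h_t$ into $\Vert\X_t-\X^*\Vert_F$ only through $\alpha$) and a spectral gap of $\X^*$ give you nothing quantitative here. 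Worse, the lemma does not assume $\rank(\X_t)\leq r^*$: when $\rank(\X_t)>r^*$ there exist unit vectors in $\textrm{Im}(\X_t)$ lying entirely inside $\textrm{Im}(\P_t^{\perp})$, so a uniform bound $a\leq 8/9$ over $\textrm{Im}(\X_t)$ is simply false in general.

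The paper controls $a$ by a short intrinsic feasibility argument that uses only the hypothesis on $h_t$. Run your own first-branch computation with the test vector $\u^{(0)}$ but with step $\eta^*:=\min\{\delta_t/(2\beta),\gamma_t\}$ instead of $\gamma_t$: the point $\Y_{\eta^*}=\X_t+\eta^*\bigl(\u^{(0)}(\u^{(0)})^{\top}-\u_{t,-}\u_{t,-}^{\top}\bigr)$ lies in $\mS^n$ (Lemma~\ref{lem:stepsize}), so $f(\Y_{\eta^*})\geq f^*$, while your bracket gives $f(\Y_{\eta^*})\leq f(\X_t)-\tfrac12\eta^*\delta_t a$. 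Hence $a\leq 2h_t/(\eta^*\delta_t)$, and the hypothesis on $h_t$ converts this directly into $2\gamma_t\beta\,a\leq\delta_t/3$ --- no residual $(1-a)$ factor to absorb. With this in hand, the paper takes the test vector $\u=\tau'\z+\sqrt{1-\tau'^2}\,\u^{(0)}$ with $\z=\P_t^{\perp}\u_{t,-}/\Vert\P_t^{\perp}\u_{t,-}\Vert$ and $\tau'=(1-c)\sqrt{a}$, $c=\min\{\delta_t/(3\gamma_t\beta),1\}$; the bracket then collapses in one line to $-\tfrac{\gamma_t\delta_t}{3}\min\{\tfrac{\delta_t}{3\gamma_t\beta},1\}\,a$, with no case split on $\beta\gamma_t$. (As a side remark, your claim $\lambda^*\leq\sqrt{1-a}$ needs $\beta\gamma_t\geq 2\delta_t$, not merely $\beta\gamma_t>2\delta_t/3$; for $\beta\gamma_t\in(2\delta_t/3,\delta_t]$ the numerator parabola opens downward and your formula for $\lambda^*$ is not even positive.)
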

\begin{proof}
Write $\u_{t,-}$ as $\u_{t,-} = \tau\z + \sqrt{1-\tau^2}\w$, where $\z=\frac{\P_t^{\perp}\u_{t,-}}{\Vert{\P_t^{\perp}\u_{t,-}}\Vert}$, $\tau = \Vert{\P_t^{\perp}\u_{t,-}}\Vert$,  and $\w=\frac{\P_t\u_{t,-}}{\Vert{\P_t\u_{t,-}}\Vert}$. Note that by definition of $\P_t$ and $\P_t^{\perp}$ we have that 
\begin{align}\label{eq:lem:pfw:0}
\z^{\top}\nabla_t\z - \w^{\top}\nabla_t\w \geq \lambda_{n-r^*}(\nabla_t) - \lambda_{n-r^*+1}(\nabla_t) = \delta_t. 
\end{align}
Consider now some unit vector $\u = \tau'\z + \sqrt{1-\tau'^2}\w$ with $0\leq \tau'\leq\tau$. Fix some step-size $\eta\in[0,1]$ and denote $\Y_{\eta} = \X_t + \eta(\u\u^{\top}-\u_{t,-}\u_{t,-}^{\top})$. Using the smoothness of $f$ it holds that,
\begin{align}\label{eq:lem:pfw:1}
%f(\Y) &\leq f(\X_t) + \eta\langle{\u_{t,+}\u_{t,+}^{\top}-\u_{t,-}\u_{t,-}^{\top}, \nabla_t}\rangle + \frac{\eta^2\beta}{2}\Vert{{\u_{t,+}\u_{t,+}^{\top}-\u_{t,-}\u_{t,-}^{\top}}\Vert}_F^2 \nonumber \\
 f(\Y_{\eta}) &\leq f(\X_t) + \eta\langle{\u\u^{\top}-\u_{t,-}\u_{t,-}^{\top}, \nabla_t}\rangle + \frac{\eta^2\beta}{2}\Vert{{\u\u^{\top}-\u_{t,-}\u_{t,-}^{\top}}\Vert}_F^2 \nonumber \\
 &= f(\X_t) + \eta\left({(\tau'^2-\tau^2)\z^{\top}\nabla_t\z -(\tau'^2-\tau^2)\w^{\top}\nabla_t\w}\right) + \eta^2\beta\left({1-(\u^{\top}\u_{t,-})^2}\right) \nonumber \\
 &\underset{(a)}{\leq} f(\X_t) - \eta(\tau^2-\tau'^2)\delta_t +\eta^2\beta\left({1 - (\tau\tau' + \sqrt{1-\tau^2}\sqrt{1-\tau'^2})^2}\right) \nonumber\\
 &= f(\X_t) -\eta(\tau^2-\tau'^2)\delta_t \nonumber \\
 &~~~+ \eta^2\beta\left({1 - \left({\tau^2\tau'^2 + (1 - \tau^2-\tau'^2 + \tau^2\tau'^2) + 2\tau\tau'\sqrt{1-\tau^2}\sqrt{1-\tau'^2}}\right)}\right) \nonumber \\
  &= f(\X_t) -\eta(\tau^2-\tau'^2)\delta_t + \eta^2\beta\left({\tau^2 + \tau'^2  - 2\tau^2\tau'^2 - 2\tau\tau'\sqrt{1-\tau^2}\sqrt{1-\tau'^2}}\right) \nonumber\\
  &\underset{(b)}{\leq}    f(\X_t) -\eta(\tau^2-\tau'^2)\delta_t + \eta^2\beta\left({\tau^2 + \tau'^2  - 2\tau^2\tau'^2 - 2\tau\tau'(1-\tau^2)}\right) \nonumber\\
    &=    f(\X_t) -\eta(\tau^2-\tau'^2)\delta_t + \eta^2\beta(\tau-\tau')^2  + 2\eta^2\beta\tau^2\tau'(\tau-\tau') \nonumber\\
    &= f(\X_t) - \eta(\tau-\tau')\left({(\tau+\tau')\delta_t - \eta\beta\left({(\tau-\tau')+2\tau^2\tau'}\right)}\right),
\end{align}
where (a) is due to Eq. \eqref{eq:lem:pfw:0} and recalling that $\tau'\in[0,\tau]$, and (b) also follows since $\tau' \leq \tau$.

First note that setting $\tau' =0$ in \eqref{eq:lem:pfw:1} yields
\begin{align*}
f(\Y_{\eta}) \leq f(\X_t) - \eta\tau^2\left({\delta_t - \eta\beta}\right).
\end{align*}
Using the step-size $\eta^* =  \min\{\frac{\delta_t}{2\beta}, \gamma_t\}$, which satisfies that $\Y_{\eta^*}\in\mS^n$ (see Lemma \ref{lem:stepsize}), then gives
\begin{align*}
f(\Y_{\eta^*}) - f^* \leq  h_t - \frac{ \min\{\frac{\delta_t}{2\beta}, \gamma_t\}\delta_t\tau^2}{2},
\end{align*}
which implies that
\begin{align}\label{eq:lem:pfw:2}
\tau^2 \leq \frac{2h_t}{ \min\{\frac{\delta_t}{2\beta}, \gamma_t\}\delta_t}.
\end{align}

Consider now the case $\tau' = (1-c)\tau$ for some scalar $c\in[0,1]$ to be specified in the sequel. Using the smoothness of $f$ again, the definition of $\u_{t,+}$ in Algorithm \ref{alg:AFW} (see also Eq. \eqref{eq:pws}), and applying \eqref{eq:lem:pfw:1} with $\eta=\gamma_t$ (note that by definition $\gamma_t\in(0,1]$) we have that,
\begin{align*}
f(\X^{\textrm{PFW}}_{t+1}) &\leq f(\X_t) + \gamma_t\langle{\u_{t,+}\u_{t,+}^{\top}-\u_{t,-}\u_{t,-}^{\top}, \nabla_t}\rangle + \frac{\gamma_t^2\beta}{2}\Vert{{\u_{t,+}\u_{t,+}^{\top}-\u_{t,-}\u_{t,-}^{\top}}\Vert}_F^2 \nonumber \\
&\leq f(\X_t) - \gamma_tc\tau^2\left({\delta_t - \gamma_t\beta{}(c + 2\tau^2)}\right).
\end{align*}

Let us now set $c = \min\{\frac{\delta_t}{3\gamma_t\beta},1\}$ to obtain
\begin{align*}
f(\X^{\textrm{PFW}}_{t+1}) &\leq f(\X_t) - \gamma_t\min\{\frac{\delta_t}{3\gamma_t\beta},1\}\tau^2\left({\frac{2\delta_t}{3} - 2\gamma_t\beta\tau^2}\right) \\
& \leq f(\X_t) - \gamma_t\min\{\frac{\delta_t}{3\gamma_t\beta},1\}\tau^2\left({\frac{2\delta_t}{3} - \frac{4\gamma_t\beta{}h_t}{\min\{\frac{\delta_t}{2\beta}, \gamma_t\}\delta_t}}\right),
\end{align*}
where the last inequality follows from plugging \eqref{eq:lem:pfw:2} into the term $2\gamma_t\beta\tau^2$.

Thus, recalling that $\tau^2 = \Vert{\P_t^{\perp}\u_{t,-}}\Vert^2$ and that $\gamma_t\in(0,1]$, we have that if $h_t \leq  \frac{\delta_t^2\min\{\frac{\delta_t}{2\beta},1\}}{12\beta} \leq \frac{\delta_t^2\min\{\frac{\delta_t}{2\beta}, \gamma_t\}}{12\gamma_t\beta}$, then we indeed have that
\begin{align*}
f(\X^{\textrm{PFW}}_{t+1}) \leq f(\X_t) - \frac{\gamma_t\delta_t}{3}\min\{\frac{\delta_t}{3\gamma_t\beta},1\}\Vert{\P_t^{\perp}\u_{t,-}}\Vert^2.
\end{align*}

%Suppose $\Vert{\P_t^{\perp}\v_{t,-}}\Vert >0$, otherwise the lemma holds trivially. Denote $\w_t = \frac{\P_t\v_{t,-}}{\Vert{\P_t\v_{t,-}}\Vert}$. Using the choice of $\v_{t,+}$ and the smoothness of $f$ we have that,
%\begin{align*}
%f(\X_{t+1}) &\leq f(\X_t) + \eta_t\langle{\v_{t,+}\v_{t,+}^{\top}-\v_{t,-}\v_{t,-}^{\top}, \nabla_t}\rangle + \frac{\eta_t\beta}{2}\Vert{{\v_{t,+}\v_{t,+}^{\top}-\v_{t,-}\v_{t,-}^{\top}}\Vert}_F^2 \\
% &\leq f(\X_t) + \eta_t\langle{\w_t\w_t^{\top}-\v_{t,-}\v_{t,-}^{\top}, \nabla_t}\rangle + \frac{\eta_t\beta}{2}\Vert{{\w_t\w_t^{\top}-\v_{t,-}\v_{t,-}^{\top}}\Vert}_F^2 \\
%&\leq f(\X_t) + \left({\w_t^{\top}\nabla_t\w_t - \left({(1-\Vert{\P_t\v_{t,-}}\Vert^2)\lambda_{n-r^*}(\nabla_t) + \Vert{\P_t\v_{t,-}}\Vert^2\w_t^{\top}\nabla_t\w_t}\right)}\right) \\
%&~~~+ \eta_t\beta\left({1 - \Vert{\P_t\v_{t,-}}\Vert^2}\right) \\
%&=f(\X_t) + \eta_t\Vert{\P_t^{\perp}\v_{t,-}}\Vert^2\left({-\delta_t + \eta_t\beta}\right).
%\end{align*}
\end{proof}

\begin{lemma}\label{lem:expectedProj}
Fix some iteration $t$ of Algorithm \ref{alg:AFW}. It holds that $\E_t[\Vert{\P_t^{\perp}\u_{t,-}}\Vert^2] \geq \frac{1}{r_t}\trace(\P_t^{\perp}\X_t)$.
\end{lemma}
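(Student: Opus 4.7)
The plan is to reduce the expectation to a trace computation via the identity $\E_t[\u_{t,-}\u_{t,-}^\top] = \frac{1}{r_t}\Pi_t$, where $\Pi_t$ denotes the projection onto $\textrm{Im}(\X_t)$, and then to dominate $\X_t$ by $\Pi_t$ in the PSD order to finish.

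First, I would verify the identity $\E_t[\u_{t,-}\u_{t,-}^\top] = \frac{1}{r_t}\Pi_t$. Since $\u_{t,-}$ is uniform on the unit sphere of the $r_t$-dimensional subspace $\textrm{Im}(\X_t)$, its distribution is invariant under every orthogonal transformation that fixes $\textrm{Im}(\X_t)$. Hence $\E_t[\u_{t,-}\u_{t,-}^\top]$ commutes with all such rotations, so by Schur's lemma it must be a scalar multiple of $\Pi_t$. Taking traces, $\trace(\E_t[\u_{t,-}\u_{t,-}^\top]) = \E_t[\Vert\u_{t,-}\Vert^2] = 1$ and $\trace(\Pi_t) = r_t$, pinning down the scalar to $1/r_t$.

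Next, I would rewrite the quantity of interest as a trace and apply the identity:
\begin{align*}
\E_t[\Vert\P_t^{\perp}\u_{t,-}\Vert^2] = \E_t[\u_{t,-}^\top \P_t^{\perp}\u_{t,-}] = \trace\bigl(\P_t^{\perp}\,\E_t[\u_{t,-}\u_{t,-}^\top]\bigr) = \frac{1}{r_t}\trace(\P_t^{\perp}\Pi_t).
\end{align*}

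It then remains to show $\trace(\P_t^{\perp}\Pi_t) \geq \trace(\P_t^{\perp}\X_t)$. For this I would observe that $\Pi_t \succeq \X_t$. Indeed, diagonalize $\X_t = \sum_{i=1}^{r_t}\sigma_i\v_i\v_i^\top$ with $\sigma_i>0$ and $\sum_i\sigma_i = \trace(\X_t) = 1$, so in particular each $\sigma_i \in (0,1]$; since $\Pi_t = \sum_{i=1}^{r_t}\v_i\v_i^\top$, we get $\Pi_t - \X_t = \sum_i(1-\sigma_i)\v_i\v_i^\top \succeq 0$. Combined with $\P_t^{\perp} \succeq 0$ and the standard fact that the trace of a product of two PSD matrices is nonnegative, this yields $\trace\bigl(\P_t^{\perp}(\Pi_t - \X_t)\bigr) \geq 0$, giving the desired inequality.

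I don't anticipate a genuine obstacle here; the lemma is essentially an averaging identity followed by a PSD domination. The only subtle point to get right is that the averaging is over the unit sphere of $\textrm{Im}(\X_t)$ rather than of $\reals^n$, which is exactly what produces the factor $1/r_t$ (instead of $1/n$).
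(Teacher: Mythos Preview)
Your proposal is correct and follows essentially the same approach as the paper: both use the identity $\E_t[\u_{t,-}\u_{t,-}^\top]=\frac{1}{r_t}\Pi_t$ (the paper writes $\Pi_t=\U_t\U_t^\top$ via the eigendecomposition) together with the PSD domination $\X_t\preceq\Pi_t$ (in the paper this appears as $\Lambda_t\preceq\lambda_1(\X_t)\I\preceq\I$). The only difference is the order of the steps---you compute the expectation first and then bound, while the paper starts from $\trace(\P_t^{\perp}\X_t)$ and bounds upward---but the mathematical content is identical.
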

\begin{proof}
Write the eigen-decomposition of $\X_t$ as $\X_t = \U_t\Lambda_t\U_t^{\top}$, where $\Lambda_t$ is $r_t\times r_t$ and $\U_t$ is $n\times r_t$. It holds that,
\begin{align*}
\trace(\P_t^{\perp}\X_t) &= \trace(\P_t^{\perp}\U_t\Lambda_t\U_t^{\top}) = \trace(\U_t^{\top}\P_t^{\perp}\U_t\Lambda_t) 
\leq \lambda_1(\X_t)\trace(\U_t^{\top}\P_t^{\perp}\U_t) \\
&\leq \trace(\P_t^{\perp}\U_t\U_t^{\top}) \underset{(a)}{=} \trace\left({\P_t^{\perp}\E_t[r_t\u_{t,-}\u_{t,-}^{\top}]}\right) = \E_{t}\left[{r_t\trace\left({\P_t^{\perp}\u_{t,-}\u_{t,-}^{\top}}\right)}\right] \\
&= r_t\E_{t}[\Vert{\P_t^{\perp}\u_{t,-}}\Vert^2],
\end{align*}
where (a) holds since by definition of $\u_{t,-}$ in Algorithm \ref{alg:AFW} we have that $\E_t[\u_{t,-}\u_{t,-}^{\top}] = \frac{1}{r_t}\U_t\U_t^{\top}$.
\end{proof}

The following lemma is central to the analysis of the benefit of both standard Frank-Wolfe steps and away steps in Algorithm \ref{alg:AFW}. It lower-bounds the eigengap between the extreme eigenvalues associated with the principal $r^*$-dimensional subspace (corresponding to the $r^*$ lowest eigenvalues) of the current gradient direction $\nabla_t$. This is essentially a type of Polyak-\L{}ojasiewicz (PL) inequality w.r.t. the face of $\mS^n$ corresponding to the principal subspace of $\nabla_t$. 
\begin{lemma}[in-face eigengap]\label{lem:eigengap}
Assume $r^* \geq 2$. Fix some iteration $t$ of Algorithm \ref{alg:AFW} and suppose that $\trace(\P_t^{\perp}\X_t) \leq \frac{h_t}{4\Vert{\nabla_t}\Vert}$. Then,
%Assume $r^* \geq 2$. Fix some iteration $t$ of Algorithm \ref{alg:AFW} and suppose that $\rank(\X_t) \leq r^*$ and $\trace(\P_t^{\perp}\X_t) \leq \frac{h_t}{4\Vert{\nabla_t}\Vert}$. Then,
\begin{align*}
%\lambda_{n-r^*+1}(\nabla_t) - \lambda_n(\nabla_t) \geq \frac{h_t}{2\sqrt{r^*}\Vert{\X_t-\X^*}\Vert_F}.
\lambda_{n-r^*+1}(\nabla_t) - \lambda_n(\nabla_t) \geq \sqrt{\frac{\alpha{}h_t}{8r^*}}.
\end{align*}
\end{lemma}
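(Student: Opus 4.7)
The plan is to start from the convexity inequality
\[
h_t \;\le\; \langle \X_t - \X^*, \nabla_t\rangle,
\]
with $\X^*\in\mX^*$ chosen as the projection of $\X_t$ onto the optimal set, so that Assumption~\ref{ass:qg} simultaneously yields $\|\X_t - \X^*\|_F \le \sqrt{2 h_t/\alpha}$. The idea is then to split this inner product along the spectral decomposition of $\nabla_t$, isolating the piece that lives on the principal $r^*$-dimensional subspace $\mathrm{range}(\P_t)$---which is governed by the in-face gap $\delta_{t,r^*}$---from the piece living on its orthogonal complement, which is directly controlled by the trace-leakage hypothesis $\trace(\P_t^\perp\X_t)\le h_t/(4\|\nabla_t\|)$.

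Concretely, since $\trace(\X_t-\X^*)=0$ I replace $\nabla_t$ by the PSD matrix $\M := \nabla_t - \lambda_n(\nabla_t)\,\I$ without changing the inner product. Because $\M$ commutes with $\P_t$, it splits orthogonally as $\M = \M_1+\M_2$ with $\M_1 := \P_t\M\P_t$ and $\M_2 := \P_t^\perp\M\P_t^\perp$. By the definition of $\delta_{t,r^*}$, the matrix $\M_1$ is PSD of rank at most $r^*$ with largest eigenvalue exactly $\delta_{t,r^*}$, so $\|\M_1\|_F \le \sqrt{r^*}\,\delta_{t,r^*}$. The matrix $\M_2$ is PSD with operator norm $\lambda_1(\nabla_t)-\lambda_n(\nabla_t)\le 2\|\nabla_t\|$.

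I would then bound the two pieces separately. For the $\P_t^\perp$ piece---which is where the trace-leakage assumption enters---the key move is to simply discard the (nonnegative) $\X^*$ contribution using $\X^*,\M_2\succeq 0$:
\[
\trace\bigl((\X_t-\X^*)\M_2\bigr) \;\le\; \trace(\X_t\M_2) \;\le\; \|\M_2\|\,\trace(\P_t^\perp\X_t) \;\le\; 2\|\nabla_t\|\cdot\frac{h_t}{4\|\nabla_t\|} \;=\; \frac{h_t}{2}.
\]
For the $\P_t$ piece I apply Cauchy--Schwarz in Frobenius norm together with quadratic growth,
\[
\trace\bigl((\X_t-\X^*)\M_1\bigr) \;\le\; \|\X_t-\X^*\|_F\,\|\M_1\|_F \;\le\; \sqrt{\tfrac{2 h_t}{\alpha}}\cdot\sqrt{r^*}\,\delta_{t,r^*}.
\]

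Substituting both bounds back into the convexity inequality yields $h_t \le h_t/2 + \sqrt{2 h_t/\alpha}\,\sqrt{r^*}\,\delta_{t,r^*}$, and isolating $\delta_{t,r^*}$ gives exactly the advertised bound $\delta_{t,r^*}\ge\sqrt{\alpha h_t/(8 r^*)}$. The main subtlety I expect is precisely in the $\P_t^\perp$ piece: a first attempt might try to bound $\trace(\P_t^\perp\X^*)$ directly, which would push one toward Davis--Kahan-style perturbation estimates relating $\P_t$ and the optimal eigenspace of $\nabla f(\X^*)$; the observation that circumvents this entirely is that $\trace(\X^*\M_2)\ge 0$ has the ``right sign'' to be simply dropped, after which the trace-leakage hypothesis alone closes the argument.
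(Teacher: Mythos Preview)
Your proof is correct and reaches exactly the bound $\delta_{t,r^*}\ge\sqrt{\alpha h_t/(8r^*)}$, but the route differs from the paper's in a way worth noting. The paper splits the \emph{iterate difference} along $\P_t/\P_t^{\perp}$, writing $\X_t-\X^*$ as $\W+\W^{\perp}$ with $\W=\P_t(\X_t-\X^*)\P_t$, and then bounds $\langle\W,\nabla_t\rangle$ by separating the positive and non-positive eigenvalues of $\W$ and exploiting $\trace(\W)=-\trace(\W^{\perp})$; this is where the factor $\sqrt{r^*}\,\|\X_t-\X^*\|_F\,\delta_{t,r^*}$ emerges. You instead shift the \emph{gradient} to the PSD matrix $\M=\nabla_t-\lambda_n(\nabla_t)\I$ (harmless since $\trace(\X_t-\X^*)=0$) and split $\M$ along $\P_t/\P_t^{\perp}$. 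Because $\M_1=\P_t\M\P_t$ is now PSD with $\|\M_1\|=\delta_{t,r^*}$ and rank at most $r^*$, a single Cauchy--Schwarz in Frobenius norm replaces the paper's sign-splitting argument, and the $\P_t^{\perp}$ piece is handled by dropping $\trace(\X^*\M_2)\ge 0$ --- the same ``right sign'' observation the paper uses via the term $\trace(\P_t^{\perp}\X^*)(\lambda_n(\nabla_t)-\lambda_{n-r^*}(\nabla_t))\le 0$. Your argument is somewhat shorter and avoids the eigenvalue bookkeeping on $\W$; the paper's decomposition, on the other hand, makes the role of $\trace(\P_t^{\perp}\X^*)$ more explicit, which it reuses elsewhere. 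Both hinge on the same two structural facts: $\P_t$ commutes with $\nabla_t$, and the optimal $\X^*$ contributes with a favorable sign on $\mathrm{range}(\P_t^{\perp})$.
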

\begin{proof}
Denote $\X^*=\arg\min_{\Y\in\mX^*}\Vert{\Y-\X_t}\Vert_F$. Denote $\W = \P_t(\X_t-\X^*)\P_t$, $\W^{\perp} = \P_t^{\perp}(\X_t-\X^*)\P_t^{\perp}$, and note that since $\rank(\P_t) = r^*$ it follows that $\rank(\W) \leq r^*$.
%Suppose $\lambda_{n-r+1}(\nabla_t) - \lambda_n(\nabla_t) = O(\sqrt{h_t})$ --- otherwise, we can take a linear convergence-type step.
Using the convexity of $f$ we have that,
\begin{align}\label{eq:lem:eigengap:1}
h_t &\leq \langle{\X_t-\X^*,\nabla_t}\rangle  = \langle{\W,\nabla_t}\rangle + \langle{\W^{\perp},\nabla_t}\rangle.
\end{align}

Note that by definition of $\P_t$ and $\W$, for any eigenvector $\u$ of $\W$ corresponding to a non-zero eigenvalue, we have that
\begin{align}\label{eq:lem:eigengap:101}
\lambda_n(\nabla_t)=\min_{\v: \P_t\v = \v, \Vert{\v}\Vert=1}\v^{\top}\nabla_t\v \leq \u^{\top}\nabla_t\u \leq \max_{\v: \P_t\v = \v, \Vert{\v}\Vert=1}\v^{\top}\nabla_t\v = \lambda_{n-r^*+1}(\nabla_t).
\end{align}

Thus, we have that,
\begin{align}\label{eq:lem:eigengap:2}
\langle{\W,\nabla_t}\rangle &\leq \left({\sum_{i\in[n]: \lambda_i(\W)>0}\lambda_i(\W)}\right)\lambda_{n-r^*+1}(\nabla_t) \nonumber  \\
&~~~+ \left({\trace(\W) - \sum_{i\in[n]:\lambda_i(\W)>0}\lambda_i(\W)}\right)\lambda_{n}(\nabla_t) \nonumber\\
& \underset{(a)}{\leq} \sqrt{r^*}\Vert{\W}\Vert_F\left({\lambda_{n-r^*+1}(\nabla_t) - \lambda_n(\nabla_t)}\right)  + \trace(\W)\lambda_n(\nabla_t) \nonumber\\
& \underset{(b)}{\leq} \sqrt{r^*}\Vert{\X_t-\X^*}\Vert_F\left({\lambda_{n-r^*+1}(\nabla_t) - \lambda_n(\nabla_t)}\right)   + \trace(\W)\lambda_n(\nabla_t) \nonumber\\
& \underset{(c)}{=} \sqrt{r^*}\Vert{\X_t-\X^*}\Vert_F\left({\lambda_{n-r^*+1}(\nabla_t) - \lambda_n(\nabla_t)}\right)   - \trace(\W^{\perp})\lambda_n(\nabla_t) \nonumber\\
&= \sqrt{r^*}\Vert{\X_t-\X^*}\Vert_F\left({\lambda_{n-r^*+1}(\nabla_t) - \lambda_n(\nabla_t)}\right) \nonumber \\
&~~~+ \left({\trace(\P_t^{\perp}\X^*) - \trace(\P_t^{\perp}\X_t)}\right)\lambda_n(\nabla_t),
\end{align}
where  (a) holds since 
\begin{align*}
\sum_{i\in[n]: \lambda_i(\W)>0}\lambda_i(\W) &\leq \sum_{i\in[n]}\vert{\lambda_i(\W)}\vert \leq \sqrt{\rank(\W)}\sqrt{\sum_{i\in[n]}\lambda_i(\W)^2} \\
&\leq \sqrt{r^*}\Vert{\W}\Vert_F,
\end{align*}
(b) holds since $\Vert{\W}\Vert_F \leq \Vert{\X_t-\X^*}\Vert_F$  which is due to the inequality $\Vert{\A\B}\Vert_F \leq \Vert{\A}\Vert\cdot\Vert{\B}\Vert_F$ for any two matrices $\A,\B\in\mbS^n$, and (c) holds since 
\begin{align*}
\trace(\W) &= \trace(\P_t(\X_t-\X^*)) = \trace((\I-\P_t^{\perp})(\X_t-\X^*)) \\
&= \trace(\X_t-\X^*) - \trace(\P_t^{\perp}(\X_t-\X^*)) = -\trace(\P_t^{\perp}(\X_t-\X^*)) = -\trace(\W^{\perp}).
\end{align*}

Similarly to Eq. \eqref{eq:lem:eigengap:101}, by the definition of $\P_t^{\perp}$, we have that
\begin{align*}
\lambda_{n-r^*}(\nabla_t) = \min_{\v:\P_t^{\perp}\v=\v, \Vert{\v}\Vert=1}\v^{\top}\nabla_t\v \leq \max_{\v:\P_t^{\perp}\v=\v, \Vert{\v}\Vert=1}\v^{\top}\nabla_t\v = \lambda_1(\nabla_t). 
\end{align*}
Thus, using the definition of $\W^{\perp}$, we have that
\begin{align}\label{eq:lem:eigengap:3}
\langle{\W^{\perp},\nabla_t}\rangle &= \langle{\P_t^{\perp}\X_t\P_t^{\perp},\nabla_t}\rangle - \langle{\P_t^{\perp}\X^*\P_t^{\perp},\nabla_t}\rangle \nonumber \\
&\leq \trace(\P_t^{\perp}\X_t)\lambda_1(\nabla_t) - \trace(\P_t^{\perp}\X^*)\lambda_{n-r^*}(\nabla_t),
\end{align}
where the inequality uses the fact that $\P_t^{\perp}\X_t\P_t^{\perp}$ and $\P_t^{\perp}\X^*\P_t^{\perp}$ are positive semidefinite.

Combining \eqref{eq:lem:eigengap:1}, \eqref{eq:lem:eigengap:2} and \eqref{eq:lem:eigengap:3} gives,
\begin{align*}
h_t &\leq \sqrt{r^*}\Vert{\X_t-\X^*}\Vert_F\left({\lambda_{n-r^*+1}(\nabla_t) - \lambda_n(\nabla_t)}\right) + 2\trace(\P_t^{\perp}\X_t)\Vert{\nabla_t}\Vert \\
&~~~+ \trace(\P_t^{\perp}\X^*)\left({\lambda_n(\nabla_t) - \lambda_{n-r^*}(\nabla_t)}\right) \\
&\leq \sqrt{r^*}\Vert{\X_t-\X^*}\Vert_F\left({\lambda_{n-r^*+1}(\nabla_t) - \lambda_n(\nabla_t)}\right) + 2\trace(\P_t^{\perp}\X_t)\Vert{\nabla_t}\Vert.
\end{align*}
Rearranging the inequality, using the assumption on $\trace(\P_t^{\perp}\X_t)$, and the quadratic growth bound $\Vert{\X_t-\X^*}\Vert_F \leq \sqrt{\frac{2h_t}{\alpha}}$, yields the lemma.

\end{proof}

The following Lemma, which is motivated by the result of the previous Lemma \ref{lem:eigengap}, establishes the reduction in objective function value for FW / away steps, under a positive in-face eigengap. 
\begin{lemma}[FW/away steps under in-face eigengap]\label{lem:FWAFWstep}
Assume $r^* \geq 2$. Fix some iteration $t$ of Algorithm \ref{alg:AFW}  and suppose that $r_t \geq r^*$ and $\lambda_{r_t}(\X_t) \geq \frac{\delta_{t,r^*}}{16\beta}$. Then,
%Assume $r^* \geq 2$. Fix some iteration $t$ of Algorithm \ref{alg:AFW}. Denote $\delta_{t,r^*} = \lambda_{n-r^*+1}(\nabla_t) - \lambda_n(\nabla_t)$ and $r_t = \rank(\X_t)$, and suppose that $r_t \geq r^*$ and $\lambda_{r_t}(\X_t) \geq \frac{\delta_{t,r^*}}{16\beta}$. It holds that
\begin{align}\label{eq:lem:fwafw:res}
\min\{f(\X_{t+1}^{\textrm{FW}}), f(\X_{t+1}^{\textrm{AFW}})\} \leq f(\X_t) - \frac{\delta_{t,r^*}^2}{64\beta}.
\end{align}
\end{lemma}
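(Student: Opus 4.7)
The plan is to reduce the lemma to a simple FW-vs-away dichotomy driven by the eigenvalue structure of $\nabla_t$. Two observations set this up. First, by the Poincar\'{e} separation theorem applied to an orthonormal basis of $\textrm{Im}(\X_t)$, since $r_t \geq r^*$, the maximizer $\v_{t,-}$ satisfies $\v_{t,-}^{\top}\nabla_t\v_{t,-} \geq \lambda_{n-r^*+1}(\nabla_t)$. Second, $\langle\X_t,\nabla_t\rangle \geq \lambda_n(\nabla_t)$ since $\X_t\succeq 0$ and $\trace(\X_t)=1$. Summing the FW gap $g_F := \langle\X_t,\nabla_t\rangle - \lambda_n(\nabla_t)$ and the away gap $g_A := \v_{t,-}^{\top}\nabla_t\v_{t,-} - \langle\X_t,\nabla_t\rangle$ telescopes to $\v_{t,-}^{\top}\nabla_t\v_{t,-} - \lambda_n(\nabla_t) \geq \delta_{t,r^*}$. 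Both gaps are nonnegative, so at least one of $g_F, g_A$ exceeds $\delta_{t,r^*}/2$.

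I would then analyze the two cases using smoothness along the candidate directions. Since $\|\v_{t,+}\v_{t,+}^{\top}-\X_t\|_F^2 \leq 2$ and $\|\X_t-\v_{t,-}\v_{t,-}^{\top}\|_F^2 \leq 2$ (each operand lies in $\mS^n$), the smoothness upper bound along the chosen direction has the clean form $-\eta g + \beta\eta^2$, with $g \geq \delta_{t,r^*}/2$. In the FW case, the unconstrained optimum $\eta^\star = g/(2\beta)$ is admissible whenever $\eta^\star \leq 1$, giving a gain of at least $\delta_{t,r^*}^2/(16\beta)$. The boundary case $\eta^\star > 1$ is handled by noting that the hypothesis $\lambda_{r_t}(\X_t) \geq \delta_{t,r^*}/(16\beta)$ together with $\lambda_{r_t}(\X_t)\leq 1$ forces $\delta_{t,r^*}\leq 16\beta$, which is sufficient to match the target bound at $\eta = 1$.

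The away case is the more delicate one because the step size is capped by $\lambda_t = (\v_{t,-}^{\top}\X_t^{\dagger}\v_{t,-})^{-1}$. A short pseudo-inverse computation (expanding $\v_{t,-}$ in the eigenbasis of $\X_t$ so that $\v_{t,-}^{\top}\X_t^{\dagger}\v_{t,-}$ is a convex combination of $1/\lambda_i(\X_t)$) gives $\lambda_t \geq \lambda_{r_t}(\X_t) \geq \delta_{t,r^*}/(16\beta)$ by hypothesis. Writing $\xi = \eta/(1-\eta)$ for the effective step size, if the unconstrained optimum $\xi^\star = g_A/(2\beta)$ is admissible we again obtain a gain of $\delta_{t,r^*}^2/(16\beta)$; otherwise we take $\xi = \xi_{\max} := \lambda_t/(1-\lambda_t) \geq \lambda_t$, and since $\xi_{\max} < \xi^\star$ forces $\beta\xi_{\max} < g_A/2$, the gain $\xi_{\max}(g_A - \beta\xi_{\max}) \geq \xi_{\max} g_A/2 \geq \delta_{t,r^*}^2/(64\beta)$.

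The main obstacle is verifying that even the truncated away step delivers the prescribed rate; the hypothesis $\lambda_{r_t}(\X_t)\geq \delta_{t,r^*}/(16\beta)$ is tailored precisely so that $\lambda_t$ is large enough for this, which is why the final constant is $1/(64\beta)$ rather than the $1/(16\beta)$ coming from the FW branch. Taking the minimum over the two candidates $\X_{t+1}^{\textrm{FW}}$ and $\X_{t+1}^{\textrm{AFW}}$ then yields the stated bound.
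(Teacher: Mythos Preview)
Your proposal is correct and follows essentially the same approach as the paper: Poincar\'{e} separation to lower-bound $\v_{t,-}^{\top}\nabla_t\v_{t,-}$, splitting $\delta_{t,r^*}$ between the FW and away gaps, and a smoothness descent argument along the winning direction with the step-size bounded via $\lambda_t \geq \lambda_{r_t}(\X_t)$. The only cosmetic difference is that the paper unifies the two cases by choosing a single common $\eta = \delta_{t,r^*}/(16\beta)$ and exploiting $\lambda_{r_t}(\X_t)\leq 1/2$ (from $r_t\geq 2$) to control the $(1-\eta)^{-2}$ factor, thereby avoiding your separate boundary-case analyses; both routes yield the same constant.
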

\begin{proof}
Since $\rank(\X_t) \geq r^*$, according to the Poincar\'{e} inequality (e.g., Theorem 4.4 in \cite{calafiore2014optimization}, see also Footnote \ref{footnote:pi}), we have that there exists a unit vector $\v\in\textrm{Im}(\X_t)$ such that $\v^{\top}\nabla_t\v \geq \lambda_{n-r^*+1}(\nabla_t)$, which in turn implies that $\v_{t,-}^{\top}\nabla_t\v_{t,-} \geq \lambda_{n-r^*+1}(\nabla_t)$. Also, we clearly have that $\v_{t,+}^{\top}\nabla_t\v_{t,+} = \lambda_n(\nabla_t)$. This gives,
\begin{align}\label{lem:fwafw:eq:0}
\max\{\langle{\X_t - \v_{t,+}\v_{t,+}^{\top}, \nabla_t}\rangle, \langle{\v_{t,-}\v_{t,-}^{\top}-\X_t,\nabla_t}\rangle\} &\geq 
\frac{1}{2}\langle{\v_{t,-}\v_{t,-}^{\top} - \v_{t,+}\v_{t,+}^{\top},\nabla_t}\rangle \nonumber \\
&\geq \frac{\delta_{t,r^*}}{2},
\end{align}
Using the smoothness of $f$ we have that for any step-size $\eta\in[0,1]$,
\begin{align}\label{lem:fwafw:eq:1}
f\left({(\X_t + \eta(\v_{t,+}\v_{t,+}^{\top}-\X_t)}\right) &\leq f(\X_t) + \eta\langle{\v_{t,+}\v_{t,+}^{\top}-\X_t,\nabla_t}\rangle \nonumber \\
&~~~+ \frac{\eta^2\beta}{2}\Vert{\v_{t,+}\v_{t,+}^{\top}-\X_t}\Vert_F^2 \nonumber \\
&\leq  f(\X_t) + \eta\langle{\v_{t,+}\v_{t,+}^{\top}-\X_t,\nabla_t}\rangle  + \eta^2\beta.
\end{align}
Similarly, we have that for any step-size $\eta\in[0,\lambda_{r_t}(\X_t)]\subseteq[0,(\v_{t,-}^{\top}\X_t^{\dagger}\v_{t,-})^{-1}]$ it holds that,
\begin{align}\label{lem:fwafw:eq:2}
f\left({\frac{1}{1-\eta}\left({\X_t - \eta\v_{t,-}\v_{t,-}^{\top}}\right)}\right) &= f\left({\X_t + \frac{\eta}{1-\eta}\left({\X_t - \v_{t,-}\v_{t,-}^{\top}}\right)}\right) \nonumber \\
& \leq f(\X_t) + \frac{\eta}{1-\eta}\langle{\X_t-\v_{t,-}\v_{t,-}^{\top}, \nabla_t}\rangle + \frac{\eta^2\beta}{(1-\eta)^2} \nonumber \\
& \leq f(\X_t) + \eta\langle{\X_t-\v_{t,-}\v_{t,-}^{\top}, \nabla_t}\rangle + \frac{\eta^2\beta}{(1-\eta)^2},
\end{align}
where in the last inequality we have used the fact that $\eta\in[0,1)$ and $\langle{\X_t-\v_{t,-}\v_{t,-}^{\top}, \nabla_t}\rangle \leq 0$.

Combining Eq. \eqref{lem:fwafw:eq:1} and Eq. \eqref{lem:fwafw:eq:2} with Eq. \eqref{lem:fwafw:eq:0}, and the fact that the step sizes $\eta_t^{\textrm{FW}}, \eta_t^{\textrm{AFW}}$ in Algorithm \ref{alg:AFW} are set via line-search, we have that 
\begin{align}\label{lem:fwafw:eq:3}
\forall\eta\in[0,\lambda_{r_t}(\X_t)]:~ \min\{f(\X_{t+1}^{\textrm{FW}}), f(\X_{t+1}^{\textrm{AFW}})\} \leq f(\X_t) - \frac{\eta\delta_{t,r^*}}{2} +\frac{\eta^2\beta}{(1-\eta)^2}. 
\end{align}
In particular, under the assumptions of the lemma that $\lambda_{r_t}(\X_t) \geq \frac{\delta_{t,r^*}}{16\beta}$ and $r_t \geq r^* \geq 2$, which implies that $\lambda_{r_t}(\X_t) \leq 1/2$,  setting $\eta = \frac{\delta_{t,r^*}}{16\beta}$ in \eqref{lem:fwafw:eq:3}  we have that,
\begin{align*}
\min\{f(\X_{t+1}^{\textrm{FW}}), f(\X_{t+1}^{\textrm{AFW}})\} \leq f(\X_t) - \frac{\delta_{t,r^*}^2}{32\beta} +\frac{\delta_{t,r^*}^2}{64\beta} = f(\X_t) - \frac{\delta_{t,r^*}^2}{64\beta},
\end{align*}
where we have used the fact that with our choice of $\eta$, which satisfies $\eta \leq 1/2$, we have that $\frac{\eta^2\beta}{(1-\eta)^2} \leq \frac{\eta^2\beta}{(1-1/2)^2} = \frac{\delta_{t,r^*}^2}{64\beta}$.
\end{proof}

We are now finally positioned to complete the proof of Theorem \ref{thm:main}.

\begin{proof}[Proof of Theorem \ref{thm:main}]
First note that Result \eqref{eq:mainthm:res:0} (monotonicity of the sequence $(h_t)_{t\geq 1}$) follows in a straightforward manner from the design of Algorithm \ref{alg:AFW}: on each iteration $t$ a drop step is taken only if it does not increase the objective function value, and if a drop step is not taken then, due to the update rule for the next iterate $\X_{t+1}$ which suggests that $f(\X_{t+1}) \leq f(\X_{t+1}^{\textrm{FW}})$ and due to the use of line-search in the computation of $f(\X_{t+1}^{\textrm{FW}})$, we have that $f(\X_{t+1}) \leq f(\X_{t+1}^{\textrm{FW}})  \leq f(\X_t)$.

We turn to prove Result  \eqref{eq:mainthm:res:1}. We use again the observation that on each iteration $t$ which is not a drop step Algorithm \ref{alg:AFW} reduces the objective function value at least by the amount that a standard Frank-Wolfe step with line-search will reduce ($f(\X_{t+1}) \leq f(\X_{t+1}^{\textrm{FW}})$). Combining this observation with  Result   \eqref{eq:mainthm:res:0}, we have that on each iteration $t$ of Algorithm \ref{alg:AFW}, denoting by $t_{\textrm{drop}}$ the overall number of drop steps performed up to iteration $t$, the approximation error $h_t$ is upper-bounded by $h_{t-t_{\textrm{drop}}}^{\textrm{FW}}$ --- the approximation error of the standard FW with line search  at the beginning of its $(t-t_{\textrm{drop}})$ iteration. Result \eqref{eq:mainthm:res:1} now follows from using the well-known convergence result for the standard Frank-Wolfe with line-search over the spectrahedron $\mS^n$ (i.e., $h_t^{\textrm{FW}} \leq \frac{4\beta}{t+1}$ for all $t\geq 2$, see for instance \cite{jaggi2013revisiting}) and the upper-bound on $t_{\textrm{drop}}$ in Observation \ref{obsrv:dropsteps}.

We now turn to prove Result \eqref{eq:mainthm:res:2} which deals with the specific case $r^*=1$, i.e.,  a rank-one optimal solution. In this case, under Assumption \ref{ass:sc} there exists a unique optimal solution $\X^*$, see \cite{garber2023linear}.  \cite{garber2023linear} proved  (see proof of Theorem 2 in  \cite{garber2023linear}) that on each iteration $t$ for which we have $\Vert{\nabla_t -\nabla{}f(\X^*)}\Vert_F \leq \frac{\delta}{3}$, it holds that 
\begin{align}\label{eq:thm:main:0}
f(\X_{t+1}^{\textrm{FW}}) - f^* \leq \left({f(\X_t) - f^*}\right)\left({1-\min\{\frac{\delta}{12\beta}, \frac{1}{2}\}}\right).
\end{align}
Using the smoothness and convexity of $f$ we have that (see for instance Lemma \ref{lem:gradDist} in the appendix),
\begin{align*}
\Vert{\nabla_t -\nabla{}f(\X^*)}\Vert_F \leq \sqrt{\beta{}h_t}.
\end{align*}
Thus, whenever $h_t \leq \frac{\delta^2}{9\beta}$, we indeed have that $\Vert{\nabla_t -\nabla{}f(\X^*)}\Vert_F \leq \frac{\delta}{3}$.
Combining this with Eq. \eqref{eq:thm:main:0} and the fact that in case a drop step is not taken we have that $f(\X_{t+1}) \leq f(\X_{t+1}^{\textrm{FW}})$, we obtain Result \eqref{eq:mainthm:res:2}.\\

We now move on to prove the main result of the theorem --- Result \eqref{eq:mainthm:res:3}. Recall in this case we assume $n > r^* \geq 2$.
Let us fix some iteration $t$ of Algorithm \ref{alg:AFW} and denote $\X^*=\arg\min_{\Y\in\mX^*}\Vert{\Y-\X_t}\Vert_F$. Using Weyl's inequality for the eigenvalues, we have that
\begin{align}\label{eq:thm:main:3}
\forall i\in[n]:\quad \vert{\lambda_i(\nabla_t) - \lambda_i(\nabla{}f(\X^*))}\vert &\leq \Vert{\nabla_t - \nabla{}f(\X^*)}\Vert \leq \sqrt{\beta{}h_t},
\end{align}
where the last inequality is due to the smoothness and convexity of $f$ (see Lemma \ref{lem:gradDist} in the appendix).

Thus, if $h_t \leq \epsilon_1 := \frac{\delta^2}{16\beta}$, we have that
\begin{align}\label{eq:thm:main:1}
\delta_t  := \lambda_{n-r^*}(\nabla_t) - \lambda_{n-r^*+1}(\nabla_t) \geq  \delta -2\sqrt{\beta\epsilon_1} = \frac{\delta}{2}.
\end{align}

Denote $\epsilon_2 = \min\{\frac{(\delta/2)^2}{18\beta}, \frac{\delta/2}{6}\}$. Using Lemma \ref{lem:dropstep}   we have that if $h_t < \min\{\epsilon_1, \epsilon_2\}$ and  $\rank(\X_t)>r^*$, then a drop step will be performed. Note also that if  $h_t < \min\{\epsilon_1, \epsilon_2\}$ and a drop step is not taken, then it must hold that $\rank(\X_t) \leq r^*$. Note also that if $h_t < \epsilon_3 := \frac{\alpha\lambda_{r^*}^2}{8}$, then using Weyl's inequality again and the quadratic growth assumption we have that, 
\begin{align*}
\vert{\lambda_{r^*}(\X_t) - \lambda_{r^*}(\X^*)}\vert \leq \Vert{\X_t-\X^*}\Vert_F \leq \sqrt{\frac{2h_t}{\alpha}},
\end{align*}
%\begin{align*}
%\frac{\alpha\lambda_{r^*}(\X^*)^2}{8} > h_t \geq \frac{\alpha}{2}\Vert{\X_{t}-\X^*}\Vert_F^2 \geq \frac{\alpha}{2}\left({\lambda_{r^*}(\X_{t}) - \lambda_{r^*}(\X^*)}\right)^2,
%\end{align*}
which implies that
\begin{align}\label{eq:thm:main:2}
% 3\lambda_{r^*}(\X^*)/2 \geq  \lambda_{r^*}(\X_{t}) \geq \lambda_{r^*}(\X^*)/2,
  \lambda_{r^*}(\X_{t}) \geq \frac{\lambda_{r^*}}{2},
\end{align}
which further implies that $\rank(\X_t) \geq r^*$.
 
Thus, if $h_t < \min\{\epsilon_1,\epsilon_2,\epsilon_3\}$ and a drop step is not performed,  it must hold that $\rank(\X_t) = r^*$.  Furthermore, it holds that $\gamma_t \geq \lambda_{r^*}(\X_t) \geq \lambda_{r^*} /2$.

Define also $\epsilon_4 := \frac{(\delta/2)^3}{24\beta^2}$ and throughout the sequel suppose that indeed  $h_t < \min\{\epsilon_1, \epsilon_2, \epsilon_3, \epsilon_4\}$ and a drop step is not taken. We consider two cases. First, if $\trace(\P_t^{\perp}\X_t) > \frac{h_t}{4G}$, then it follows from Lemma \ref{lem:pfwstep} that
\begin{align*}
\E_t[f(\X_{t+1}^{\textrm{PFW}})] &\leq f(\X_t) - \E_t\left[{  \min\{\frac{\delta_t^2}{9\beta},\frac{\gamma_t\delta_t}{3}\}\Vert{\P_t^{\perp}\u_{t,-}}\Vert^2}\right] \\
&\underset{(a)}{\leq}  f(\X_t) -  \min\{\frac{\delta^2}{36\beta},\frac{\lambda_{r^*}\delta}{12}\}\E_t\left[{\Vert{\P_t^{\perp}\u_{t,-}}\Vert^2}\right] \\
&\underset{(b)}{\leq} f(\X_t)- \frac{\delta}{48Gr^*}\min\{\frac{\delta}{3\beta},\lambda_{r^*}\}h_t,
\end{align*}
where (a) follows from plugging-in \eqref{eq:thm:main:1}, and since  $\gamma_t \geq \lambda_{r^*} / 2$, and (b) follows from Lemma \ref{lem:expectedProj}.

This implies that
\begin{align}\label{eq:thm:main:r1}
\E_t[f(\X_{t+1})] - f^*  &= \E_t\left[{\min\{f(\X_{t+1}^{\textrm{FW}}), f(\X_{t+1}^{\textrm{AFW}}), f(\X_{t+1}^{\textrm{PFW}})\}}\right] - f^* \nonumber \\
&\leq \min\{f(\X_{t+1}^{\textrm{FW}}), f(\X_{t+1}^{\textrm{AFW}}), \E_t\left[{f(\X_{t+1}^{\textrm{PFW}})}\right]\} - f^* \nonumber  \\
&\leq h_t\left({1 - \frac{\delta}{48Gr^*}\min\{\frac{\delta}{3\beta},\lambda_{r^*}\}}\right).
\end{align}

In the complementing case that $\trace(\P_t^{\perp}\X_t) \leq \frac{h_t}{4G}$, it follows from Lemma \ref{lem:eigengap} that 
\begin{align}\label{eq:thm:main:4}
%\lambda_{n-r^*+1}(\nabla_t) - \lambda_n(\nabla_t) \geq \frac{h_t}{2\sqrt{r^*}\Vert{\X_t-\X^*}\Vert_F}.
\lambda_{n-r^*+1}(\nabla_t) - \lambda_n(\nabla_t) \geq \sqrt{\frac{\alpha{}h_t}{8r^*}}.
\end{align}
Additionally, using Eq. \eqref{eq:thm:main:3} again we have that,
\begin{align}\label{eq:thm:main:5}
%\lambda_{n-r^*+1}(\nabla_t) - \lambda_n(\nabla_t) \geq \frac{h_t}{2\sqrt{r^*}\Vert{\X_t-\X^*}\Vert_F}.
\lambda_{n-r^*+1}(\nabla_t) - \lambda_n(\nabla_t) &\leq \lambda_{n-r^*+1}(\nabla{}f(\X^*)) - \lambda_n(\nabla{}f(\X^*)) + 2\sqrt{\beta{}h_t} \nonumber \\
& \underset{(a)}{=}  2\sqrt{\beta{}h_t} \leq 2\sqrt{\beta{}\epsilon_3} = \sqrt{\frac{\alpha\beta}{2}}\lambda_{r^*} \underset{(b)}{\leq} \sqrt{2}\beta\lambda_{r^*}(\X_t),
\end{align}
where (a) follows since due to the optimality of $\X^*$ it holds that $\lambda_n(\nabla{}f(\X^*)) =\lambda_{n-r^*+1}(\nabla{}f(\X^*))$ (see also Lemma 5.2 in \cite{garber2021convergence}), and (b) follows from Eq. \eqref{eq:thm:main:2} and the fact that $\alpha \leq \beta$ (e.g., Lemma \ref{lem:alphabeta} in the appendix).

Thus, combining Lemma \ref{lem:FWAFWstep} with Eq. \eqref{eq:thm:main:4}, we have that
\begin{align}\label{eq:thm:main:r2}
f(\X_{t+1}) -f^* &\leq \min\{f(\X_{t+1}^{\textrm{FW}}), f(\X_{t+1}^{\textrm{AFW}})\} -f^* \leq h_t\left({1- \frac{\alpha}{512\beta{}r^*}}\right).
\end{align}

Combining \eqref{eq:thm:main:r1} and  \eqref{eq:thm:main:r2} yields Result  \eqref{eq:mainthm:res:3}.\\

Finally, it remains to prove Result $\eqref{eq:mainthm:res:4}$ for the case $r^*=n$, which follows as a simplified case of Result  \eqref{eq:mainthm:res:3}. If $h_t < \epsilon_3$, then we have that Eq. \eqref{eq:thm:main:2} and  Eq. \eqref{eq:thm:main:5} hold, and as a consequence the assumptions of Lemma \ref{lem:eigengap} and Lemma \ref{lem:FWAFWstep} hold. Thus, it follows that Eq. \eqref{eq:thm:main:r2} also holds.

%the upper-bound on the overall number of drop steps up to iteration $t$ is due to Observation \ref{obsrv:dropsteps}.
\end{proof}

\section{Numerical Experiments}\label{sec:numerics}
We conduct numerical experiments with synthetic data for the problem of recovering a low-rank ground truth matrix in $\mS^n$ from random linear measurements under two noise models: one in which the measurements are perturbed with isotropic (Gaussian) noise, and we use a standard least squares (LS) estimator. We refer to this as the \textit{LS setting}. In the second model, only a constant fraction of measurements are grossly corrupted, and we use the Huber loss for robust estimation. We refer to this as the \textit{Huber setting}. We use a similar setup and methodology to the one considered in  previous works  \cite{garber2023linear, ding2020spectral}. Concretely, we consider the following optimization problem:
\begin{align}\label{eq:expProb}
\min_{\X\in\mS^n}\{f(\X) := \sum_{i=1}^m\ell\left({\a_i^{\top}(\tau\X)\a_i - \b_i}\right)\},
%\min_{\X\in\mS^n}\{f(\X) := \frac{1}{2}\sum_{i=1}^m\left({\a_i^{\top}(\tau\X)\a_i - \b_i}\right)^2\}.
\end{align}
where in the LS setting $\ell$ is simply the square loss: $\ell(x) = \frac{1}{2}x^2$, and in the Huber setting $\ell(x) = H_{\zeta}(x)$ --- the Huber loss with parameter $\zeta$\footnote{formally defined as: $H_{\zeta}(x) = \left\{ \begin{array}{ll}
         \frac{1}{2}x^2 & \mbox{if $|x| \leq \zeta$};\\
        \zeta(|x|-\frac{1}{2}\zeta) & \mbox{if $|x| > \zeta$}.\end{array} \right.$}.
        
We let $\X_{\sharp} = \U_{\sharp}\U_{\sharp}^{\top}\in\mS^n$ denote the ground truth matrix, where $\U_{\sharp}\in\reals^{n\times r^*}$ is first set to a random matrix with  standard Gaussian entries and then normalized to have unit Frobenius norm. Each $\a_i\in\reals^n$ is a random vector with standard Gaussian entries. We set $\b = \b_{\sharp} + \z$, where $\b_{\sharp}(i) = \a_i^{\top}\X_{\sharp}\a_i$ and $\z$ is additive noise. In the LS setting we set $\z = \frac{\Vert{\b_{\sharp}}\Vert}{2}\v$ for a random unit vector $\v$. In the Huber setting, for each $i\in[m]$ we set with probability $0.15$, $\z(i) = \pm4.2\frac{\Vert{\b_{\sharp}}\Vert}{\sqrt{m}}$, and $\z(i) =0$ with remaining probability. We set the Huber loss parameter to $\zeta = 0.7\frac{\Vert{\b_{\sharp}}\Vert}{\sqrt{m}}$.

In order to avoid over-fitting the noise, in both the LS and Huber settings, we set the scaling parameter $\tau$ in \eqref{eq:expProb} to $\tau = 0.5$ (as also been done in \cite{garber2023linear, ding2020spectral}). %These choices indeed ensure (with high probability) that Problem \eqref{eq:expProb} both recovers the ground truth matrix $X_{\sharp}$ with good accuracy and that strict complementarity  holds (see also \cite{garber2023linear, ding2020spectral}).

We solve Problem \eqref{eq:expProb} to accuracy  of the order $10^{-10}$ or lower and denote our estimate for the optimal solution by $\widehat{\X^*}$ \footnote{this is verified by computing the dual gap for the candidate solution $\widehat{\X^*}$, which is given by $\langle{\widehat{\X^*}, \nabla{}f(\widehat{\X^*})}\rangle - \lambda_n(\nabla{}f(\widehat{\X^*}))$ and is always an upper-bound on the approximation error $f(\widehat{\X^*}) -f^*$ due to the convexity of $f$}. For both the LS and Huber settings we verify that in each of our randomized experiments  the strict complementarity condition indeed holds with substantial positive values (see also Section 2.2.1 in \cite{garber2023linear} for a detailed discussion about verifying the strict complementarity assumption).

We also consider a third setting, which we refer to as the \textit{No SC setting}, in which strict complementarity does not hold. In this setting we do not add noise to the measurements (i.e., we set $\b = \b_{\sharp}$) and we also set $\tau = 1$. This guarantees that at the optimal solution to  \eqref{eq:expProb}, the gradient is zero, and in particular strict complementarity cannot hold.

We set the smoothness parameter $\beta$ to $\beta = n^2/2$ \footnote{while this might seem conservative, note that a simple calculation shows that $\beta \leq \sum_{i=1}^m\Vert{\a_i}\Vert^4$ and thus we should roughly take $\beta \approx mn^2$, however this turns out to be an overly pessimistic estimate (see also experiments in \cite{ding2020spectral})}.

Each one of the figures presented below is the average of 10 i.i.d. runs, where on each run we resample both the data to Problem  \eqref{eq:expProb} and the randomness in our Algorithm \ref{alg:AFW} .

\subsection{Comparison to standard Frank-Wolfe}
We begin by comparing our Algorithm \ref{alg:AFW} (ALG 1 in the figures below) to the standard Frank-Wolfe with line-search method (FW in the figures below). Recall that FW has a worst case convergence rate of $O(1/t)$  \cite{jaggi2010simple}. In \cite{garber2023linear} it was established that under strict complementarity and in the special case $r^*=1$, it enjoys a linear convergence rate, while it remained unknown if this is also the case for $r^*\geq 2$. Figure \ref{fig:OurVsFW}, which plots for each method the value $\log({f(\X_t) - f(\widehat{\X^*})})$ vs the number of iterations $t$, shows that indeed in case $r^*=1$, in both the LS and Huber settings, both FW and our Algorithm \ref{alg:AFW} clearly exhibit a linear convergence rate. However, once we increase $r^*$, we see that while our Algorithm \ref{alg:AFW} maintains a linear convergence rate, FW converges only with a sub-linear rate. Additionally, in the No SC setting, we see that even for $r^*=1$, FW converges sub-linearly, while our algorithm exhibits a linear convergence rate. This suggests that, similarly to the state-of-affairs in the case of optimization over polytopes \cite{garber2020revisiting}, the standard FW does not exhibit linear convergence once the optimal solution is not an extreme point of the feasible set.

\begin{figure}[H]
     \centering
     \begin{subfigure}[t]{0.325\textwidth}
         \centering
         \includegraphics[width=\textwidth]{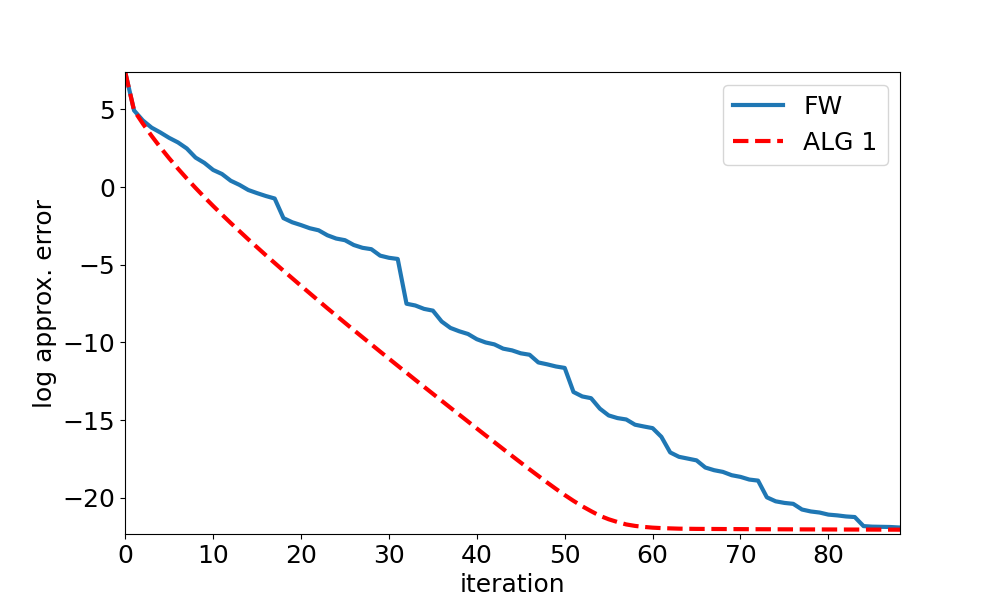}
         \caption{LS, $r^*=1$}
         %\label{fig:y equals x}
     \end{subfigure}
     \begin{subfigure}[t]{0.325\textwidth}
         \centering
         \includegraphics[width=\textwidth]{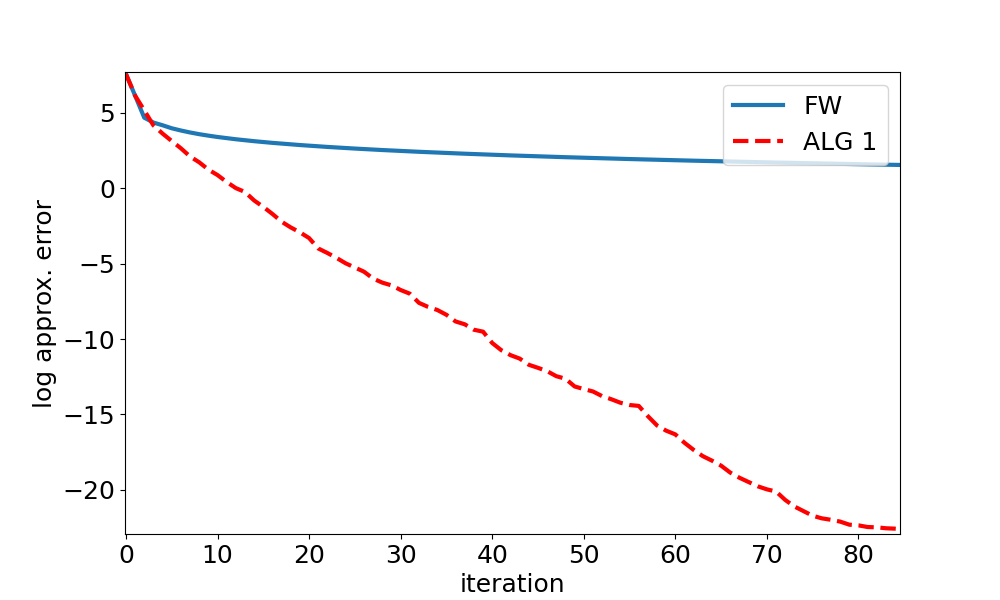}
         \caption{LS, $r^*=2$}
         %\label{fig:three sin x}         
     \end{subfigure}
     \begin{subfigure}[t]{0.325\textwidth}
         \centering
         \includegraphics[width=\textwidth]{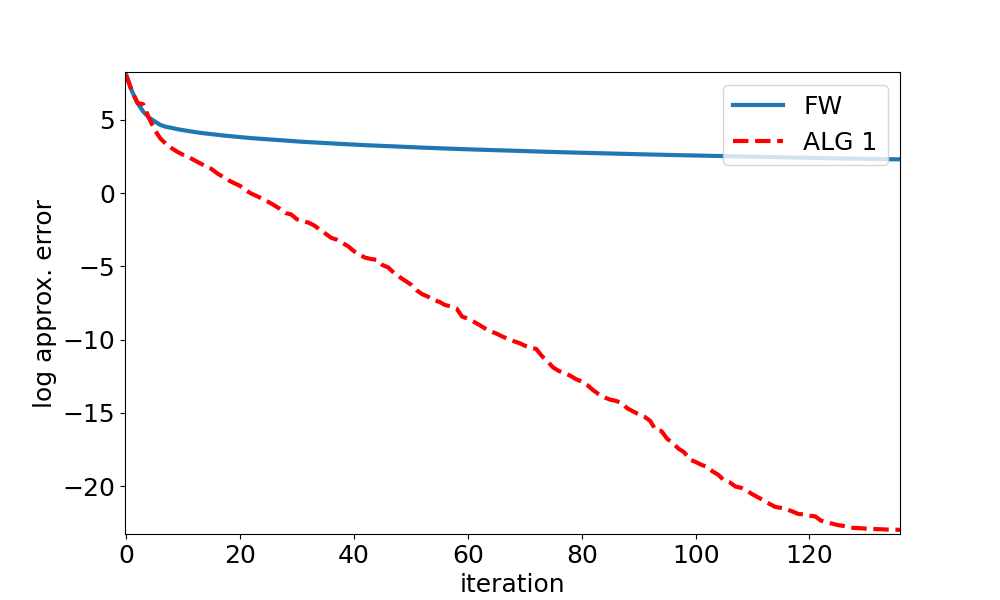}
         \caption{LS, $r^*=5$}
         %\label{fig:three sin x}         
	 \end{subfigure}
	 
     \begin{subfigure}[t]{0.325\textwidth}
         \centering
         \includegraphics[width=\textwidth]{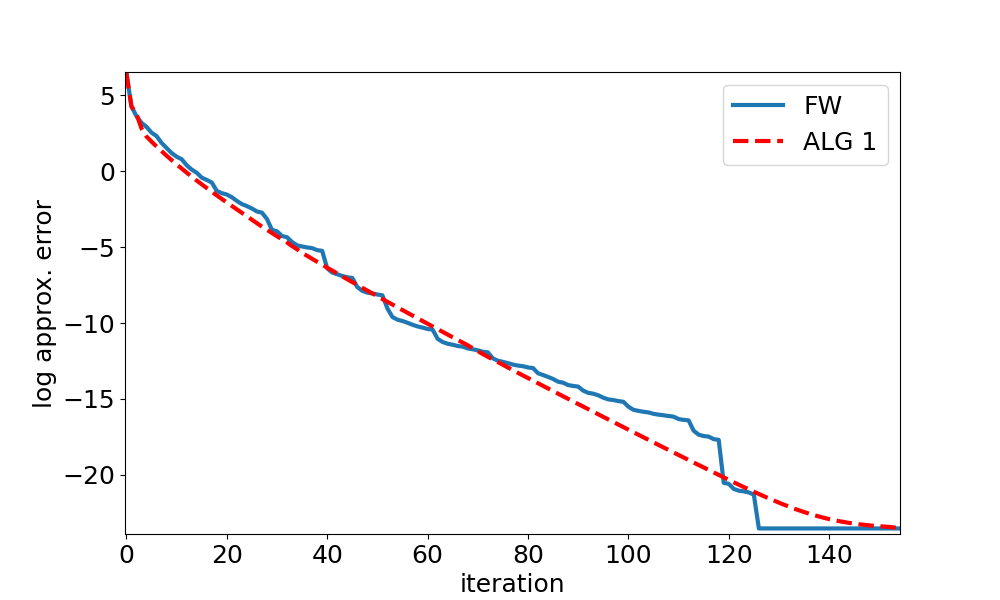}
         \caption{Huber, $r^*=1$}
         %\label{fig:y equals x}
     \end{subfigure}
     \begin{subfigure}[t]{0.325\textwidth}
         \centering
         \includegraphics[width=\textwidth]{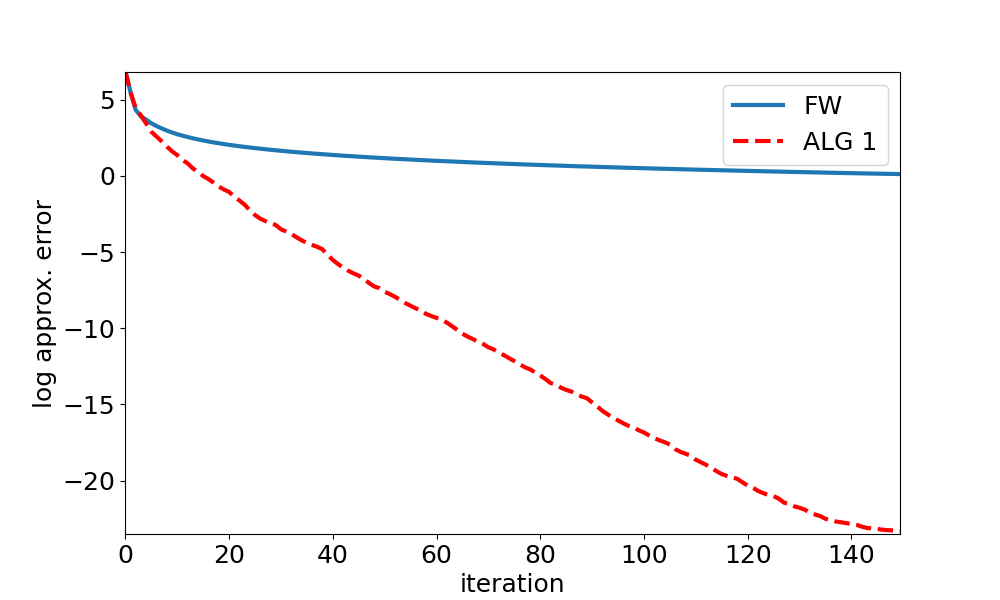}
         \caption{Huber, $r^*=2$}
         %\label{fig:three sin x}         
     \end{subfigure}
     \begin{subfigure}[t]{0.325\textwidth}
         \centering
         \includegraphics[width=\textwidth]{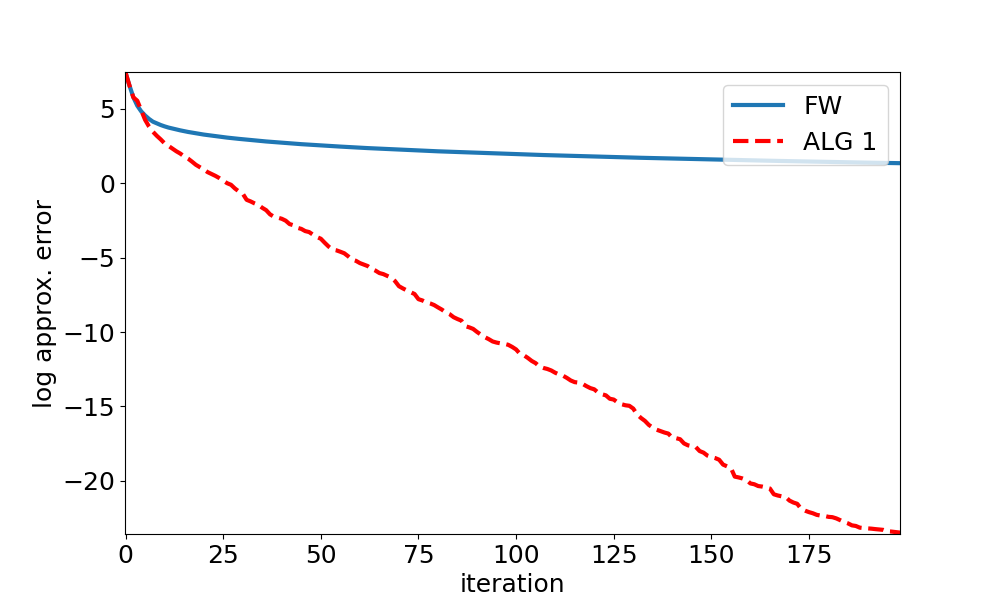}
         \caption{Huber, $r^*=5$}
         %\label{fig:three sin x}   
     \end{subfigure}

     \begin{subfigure}[t]{0.325\textwidth}
         \centering
         \includegraphics[width=\textwidth]{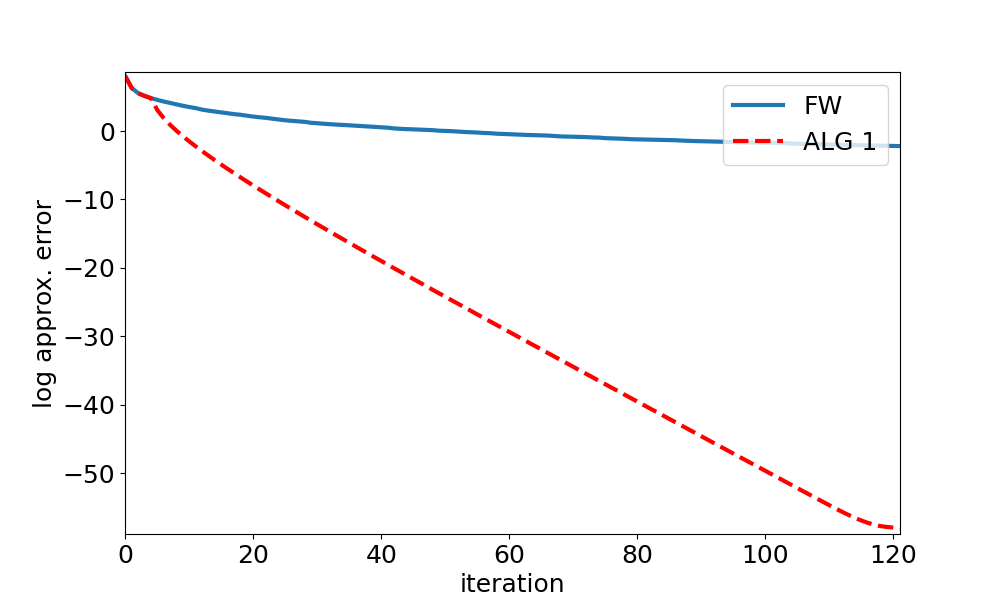}
         \caption{No SC, $r^*=1$}
         %\label{fig:y equals x}
     \end{subfigure}
     \begin{subfigure}[t]{0.325\textwidth}
         \centering
         \includegraphics[width=\textwidth]{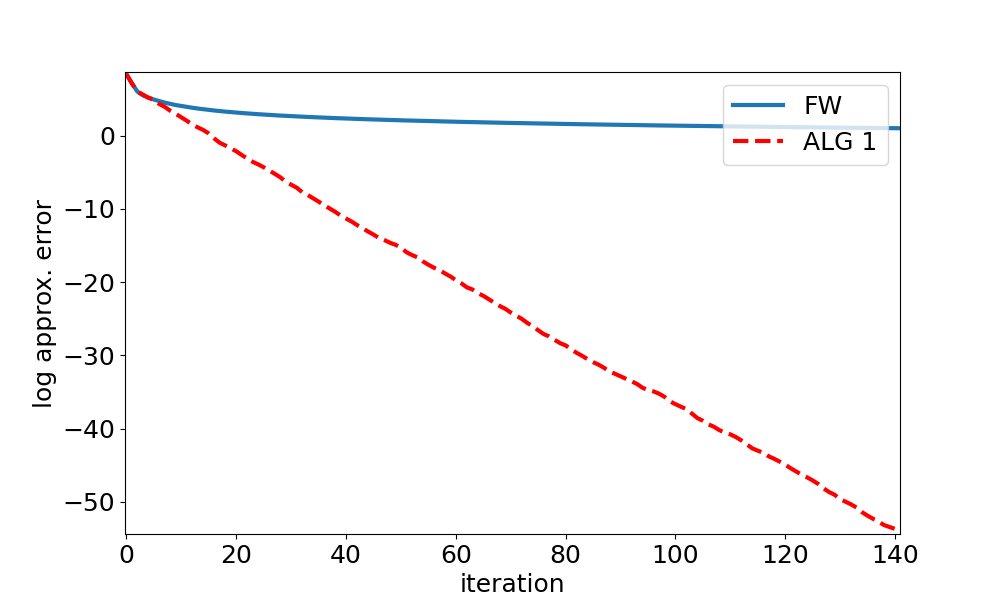}
         \caption{No SC, $r^*=2$}
         %\label{fig:three sin x}         
     \end{subfigure}
     \begin{subfigure}[t]{0.325\textwidth}
         \centering
         \includegraphics[width=\textwidth]{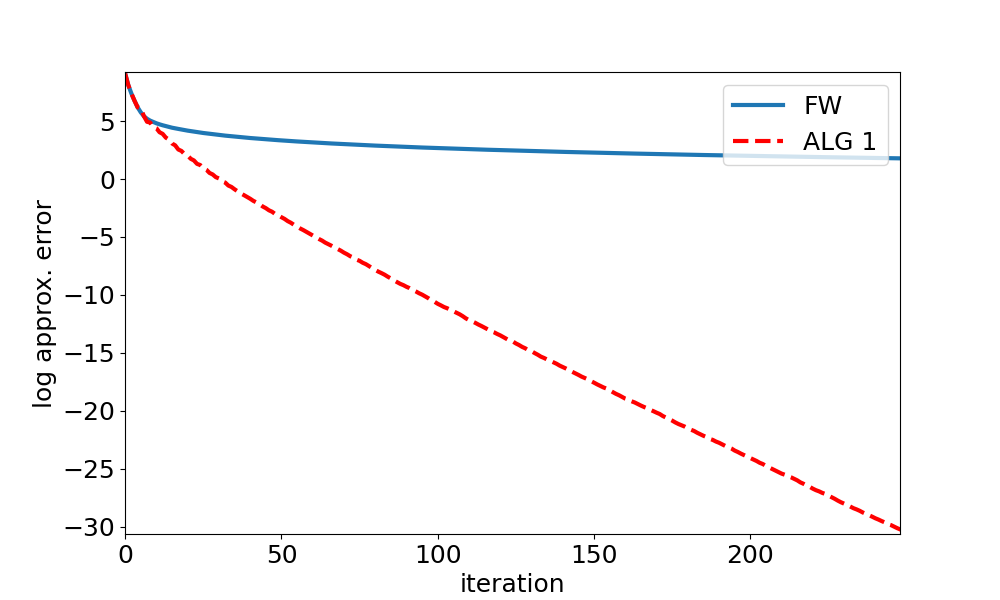}
         \caption{No SC, $r^*=5$}
         %\label{fig:three sin x}   
     \end{subfigure}     

        \caption{Comparison of Frank-Wolfe with line-search and our Algorithm \ref{alg:AFW}. We set $n=100$ and $m=15nr^*$ in all experiments.}
        \label{fig:OurVsFW}
\end{figure}

\subsection{Comparison of variants of Algorithm \ref{alg:AFW}}
We turn to provide empirical motivation for the design choices in our Algorithm \ref{alg:AFW}. In particular, Algorithm \ref{alg:AFW}  prioritizes drop steps over other steps (even if these will reduce the objective value more slowly)  and in addition to away steps (which are a classical concept in Frank-Wolfe-based methods \cite{guelat1986some, lacoste2015global}, though recall our away steps take a different form than those for polytopes),  uses also randomized proximal pairwise steps (which also  require an estimate for the smoothness parameter $\beta$).

Figure \ref{fig:compareVars}  demonstrates how each one of the above mentioned design choices contributes to the fast convergence of our Algorithm \ref{alg:AFW}. In particular, we see that the \textit{ALG 1 - away} variant, which is a natural extension of the \textit{Frank-Wolfe with away steps} method that is known to converge linearly for polytopes \cite{guelat1986some, lacoste2015global, garber2020revisiting}, converges only with a sub-linear rate in our experiments. Interestingly, the \textit{ALG 1 - noaway} variant, which does not use away steps at all (but does use drop steps), performs comparably to  Algorithm \ref{alg:AFW} in both the LS and Huber settings, however, it performs considerably worse in the No SC setting (when $r^*>2$). 

\begin{table*}[h]\renewcommand{\arraystretch}{1.3}
{\small
\begin{center}
  \begin{tabular}{|p{0.18\linewidth} | p{0.75 \linewidth}|} \hline
   algorithm &description \\ \hline
   ALG 1 &  Algorithm \ref{alg:AFW} without  modifications\\ \hline
   ALG 1 - away & Algorithm \ref{alg:AFW} without drop steps and pairwise steps\\ \hline
   ALG 1 - nodrop &Algorithm \ref{alg:AFW} without drop steps \\ \hline
   ALG 1 - det & Deterministic version of Algorithm \ref{alg:AFW}: instead of using the randomized vector $\u_{t,-}$ in the pairwise step on iteration $t$, it is replaced with the deterministic vector  $\v_{t,-}$ (used for the drop and away steps)\\ \hline
   ALG 1 - noaway & Algorithm \ref{alg:AFW} without away steps\\ \hline
  \end{tabular}
\caption{Description of variants of Algorithm \ref{alg:AFW} used in the numerical comparison.}
  \label{table:algorithms}
\end{center}
}
%\vskip -0.2in
\end{table*}\renewcommand{\arraystretch}{1} 

%\begin{figure}[H]
%         \centering
%         \includegraphics[width=0.5\textwidth]{varCompare_r=5.png}     
%        \caption{Comparison of  different variants of our Algorithm  \ref{alg:AFW}. We set $n=100$, $r^*=5$, and $m=15nr^*$.}
%        \label{fig:compareVars}
%\end{figure}

\begin{figure}[H]
     \centering
     \begin{subfigure}[t]{0.325\textwidth}
         \centering
         \includegraphics[width=\textwidth]{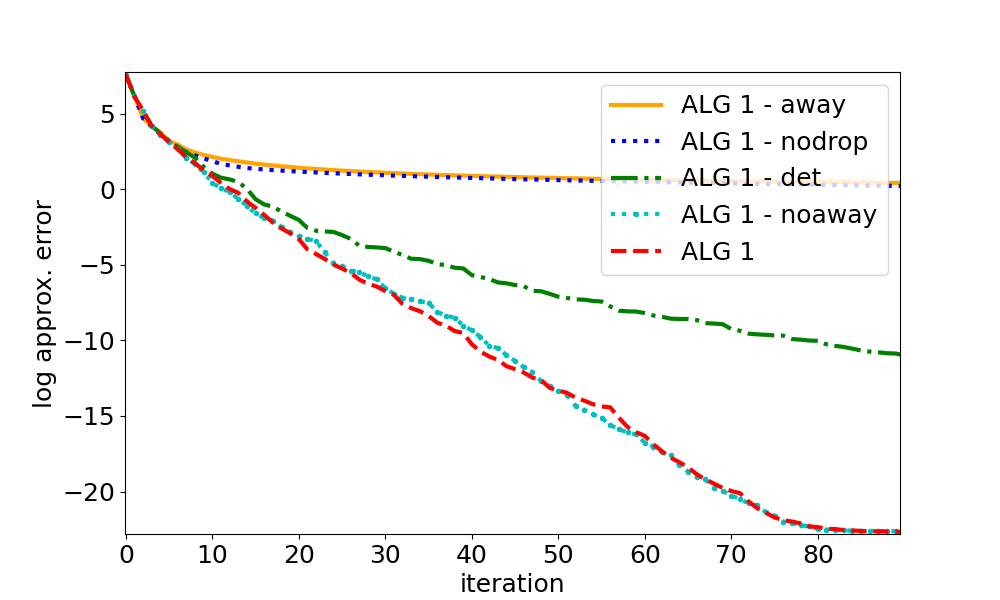}
         \caption{LS, $r^*=2$}
         %\label{fig:y equals x}
     \end{subfigure}
     \begin{subfigure}[t]{0.325\textwidth}
         \centering
         \includegraphics[width=\textwidth]{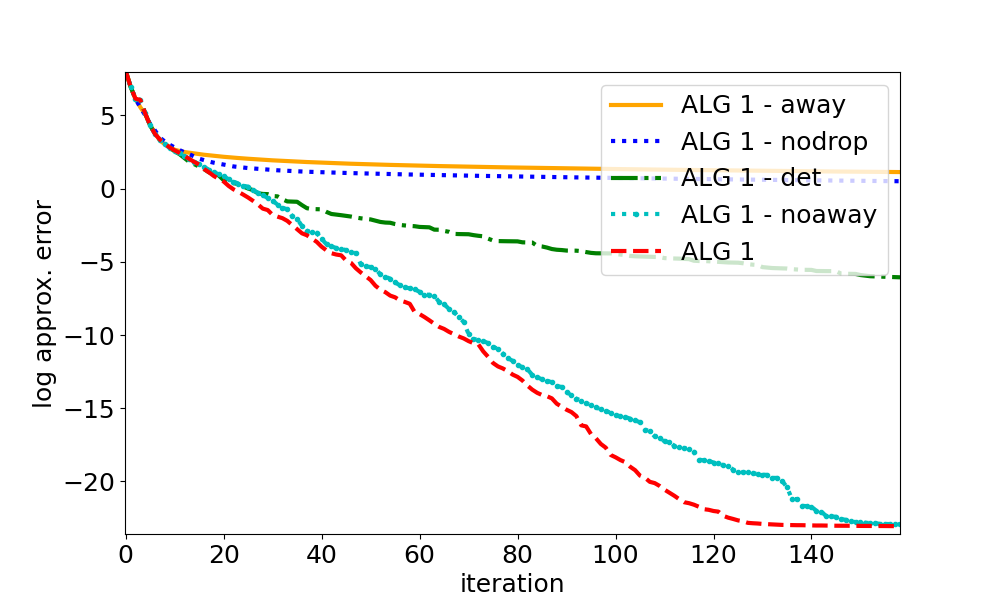}
         \caption{LS, $r^*=5$}
         %\label{fig:three sin x}         
     \end{subfigure}
     \begin{subfigure}[t]{0.325\textwidth}
         \centering
         \includegraphics[width=\textwidth]{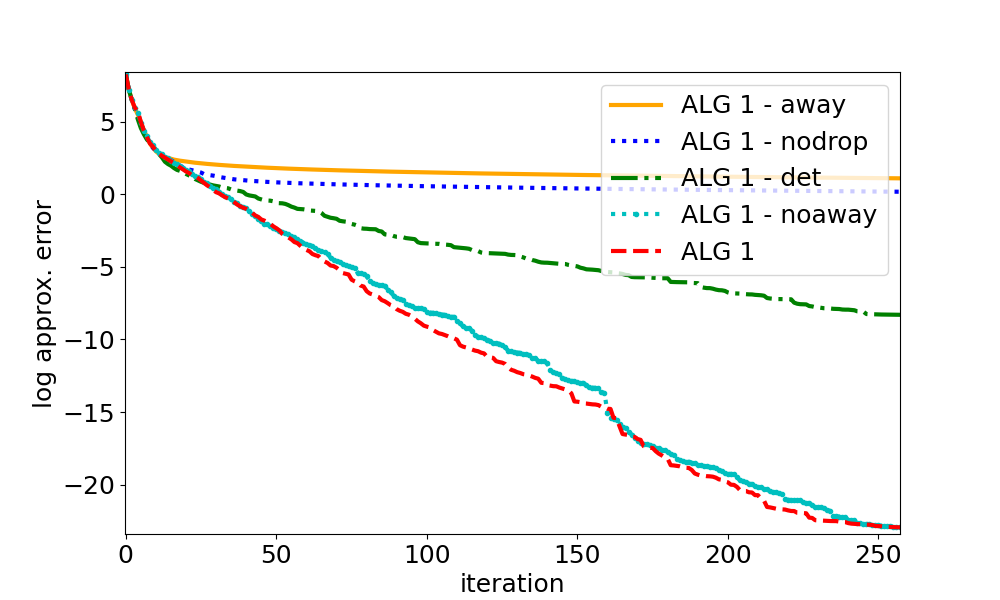}
         \caption{LS, $r^*=7$}
         %\label{fig:three sin x}         
	 \end{subfigure}
	 
     \begin{subfigure}[t]{0.325\textwidth}
         \centering
         \includegraphics[width=\textwidth]{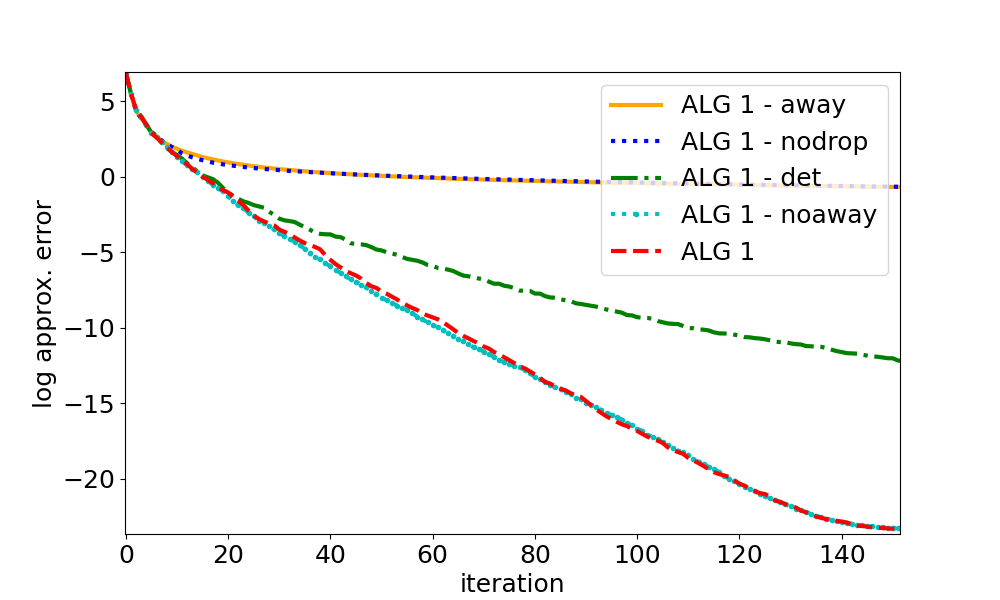}
         \caption{Huber, $r^*=2$}
         %\label{fig:y equals x}
     \end{subfigure}
     \begin{subfigure}[t]{0.325\textwidth}
         \centering
         \includegraphics[width=\textwidth]{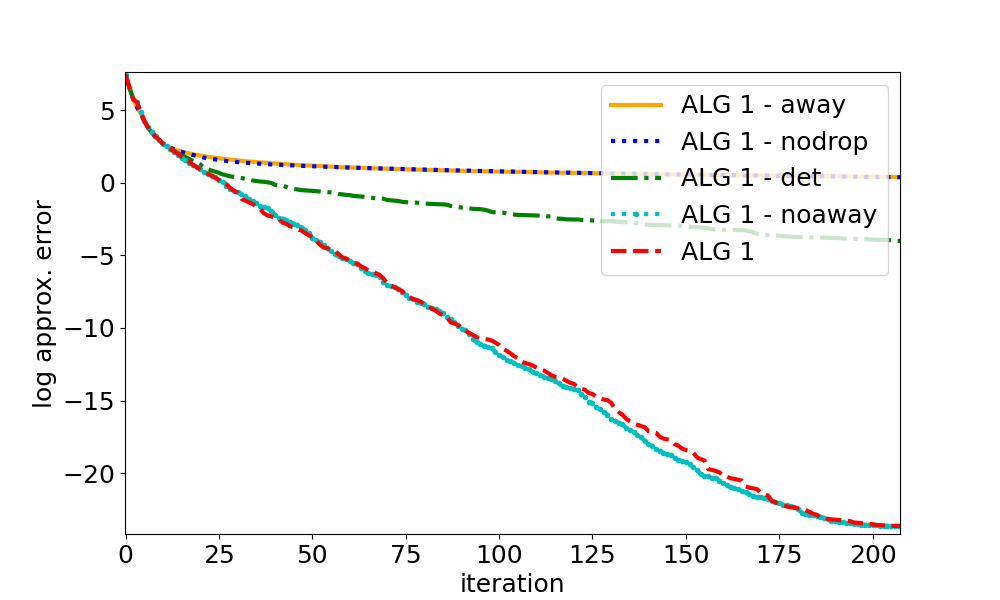}
         \caption{Huber, $r^*=5$}
         %\label{fig:three sin x}         
     \end{subfigure}
     \begin{subfigure}[t]{0.325\textwidth}
         \centering
         \includegraphics[width=\textwidth]{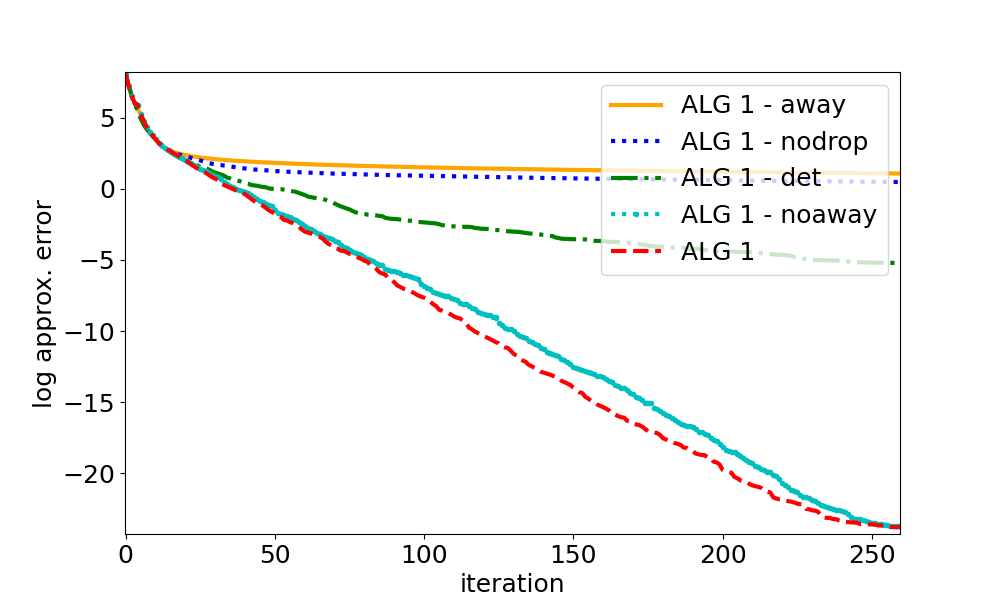}
         \caption{Huber, $r^*=8$}
         %\label{fig:three sin x}   
     \end{subfigure}

     \begin{subfigure}[t]{0.325\textwidth}
         \centering
         \includegraphics[width=\textwidth]{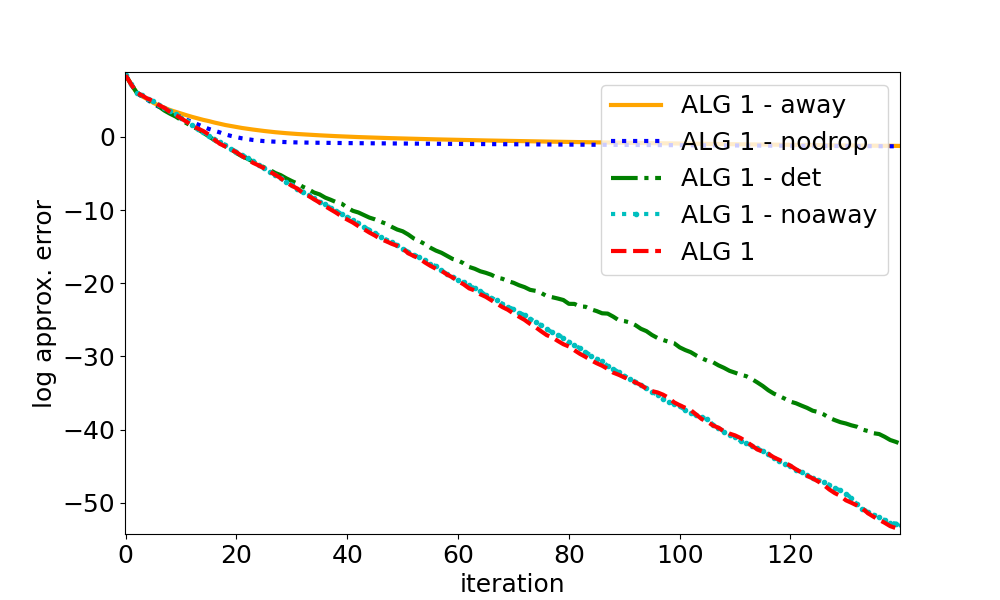}
         \caption{No SC, $r^*=2$}
         %\label{fig:y equals x}
     \end{subfigure}
     \begin{subfigure}[t]{0.325\textwidth}
         \centering
         \includegraphics[width=\textwidth]{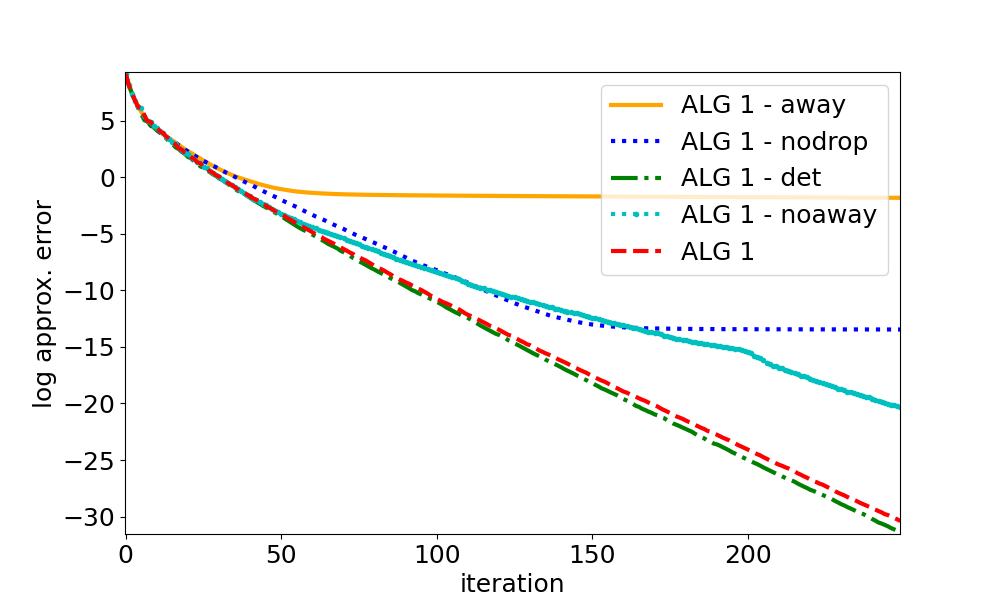}
         \caption{No SC, $r^*=5$}
         %\label{fig:three sin x}         
     \end{subfigure}
     \begin{subfigure}[t]{0.325\textwidth}
         \centering
         \includegraphics[width=\textwidth]{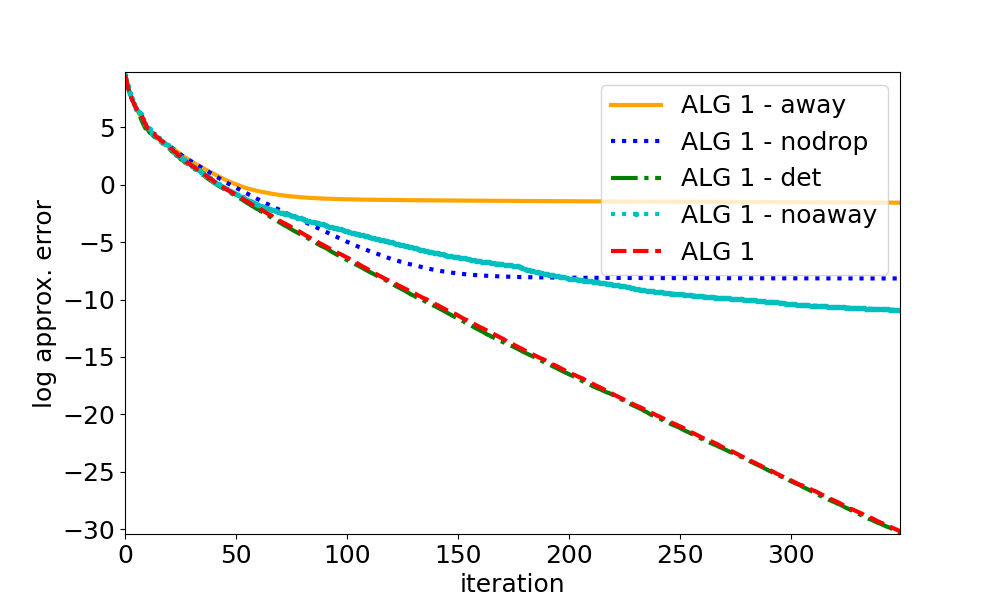}
         \caption{No SC, $r^*=8$}
         %\label{fig:three sin x}   
     \end{subfigure}     

        \caption{Comparison of different variants of our Algorithm \ref{alg:AFW}. In all experiments, except for Huber $r^*=8$, we set $n=100$, $m=15nr^*$. In Huber $r^*=8$ we set $n=100$, $m=20nr^*$.}
        \label{fig:compareVars}
\end{figure}

%\begin{figure}[H]
%     \centering
%     \begin{subfigure}[t]{0.48\textwidth}
%         \centering
%         \includegraphics[width=\textwidth]{varCompare_r=5.png}
%         \caption{$r^*=5$}
%         \label{fig:y equals x}
%     \end{subfigure}
%     \begin{subfigure}[t]{0.48\textwidth}
%         \centering
%         \includegraphics[width=\textwidth]{varCompare_r=10.png}
%         \caption{$r^*=10$}
%         \label{fig:three sin x}         
%     \end{subfigure}
%     
%        \caption{Comparison of  different variants of our Algorithm  \ref{alg:AFW}. We set $n=100$ and $m=15nr^*$.}
%        \label{fig:compareVars}
%\end{figure}

\subsection{Comparison to the Block Frank-Wolfe method}
We also compare our method to Block Frank-Wolfe methods, that is, methods that on each iteration perform an update using a rank-$r$ matrix, for some $r\geq r^*$, rather than a rank-one update as in the standard Frank-Wolfe \cite{allen2017linear, ding2020spectral}. We compare with the method from \cite{allen2017linear} (Block-FW in Figure \ref{fig:compareBlockFW}) which performs updates of the form $\X_{t+1} \gets (1-\eta)\X_t + \eta\V_t$, where $\V_t\in\mS^n$ is a rank-$r$ matrix that is obtained by projecting onto $\mS^n$ the best rank-$r$ approximation (in Frobenius norm) to the matrix $\M_t := \X_t-\frac{1}{\eta\beta}\nabla{}f(\X_t)$, and can be computed using only the top $r$ components in the eigen-decomposition of $\M_t$. While this method converges linearly (under quadratic growth), it requires a tight estimation of $r^*$ (otherwise it suffers from computationally-expensive high-rank eigen-decompositions), and it requires a good estimate of both $\alpha,\beta$ to set the step size to $\eta = O(\alpha/\beta)$, which is difficult to properly tune in practice. Here we consider a somewhat ideal implementation of this method: I. we set $r = r^*$ (i.e., assume exact knowledge of $r^*$), and II. we manually tune the step-size parameter $\eta$ for best performance and set $\eta=0.3$. 

We can see in Figure \ref{fig:compareBlockFW} (left panels) that when comparing the (log) approximation error vs. number of iterations, as expected, the Block-FW method converges faster than our Algorithm \ref{alg:AFW} which only uses rank-one matrix computations. However, when we examine the approximation error vs. the number of rank-one updates (recall our implementation of the Block-FW method does a rank-$r^*$ update on each iteration, while our Algorithm \ref{alg:AFW} does at most a rank-two update), see right panels in Figure \ref{fig:compareBlockFW}, we see that our Algorithm \ref{alg:AFW} is in fact faster.
\begin{figure}[H]
     \centering
     \begin{subfigure}[t]{0.32\textwidth}
         \centering
         \includegraphics[width=\textwidth]{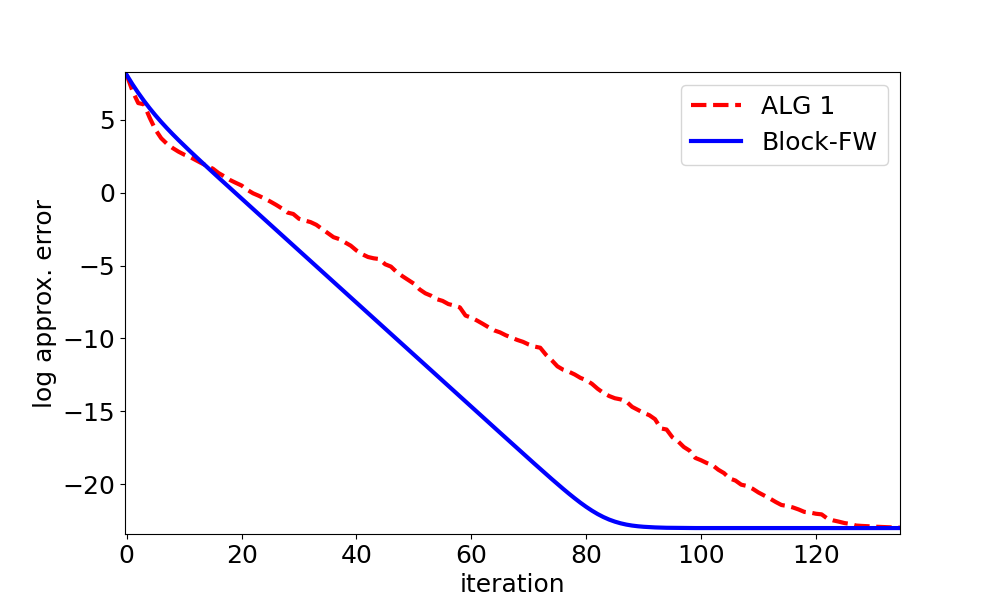}
         \caption{LS, $r^*=5$}
         %\label{fig:y equals x}
     \end{subfigure}
     \begin{subfigure}[t]{0.32\textwidth}
         \centering
         \includegraphics[width=\textwidth]{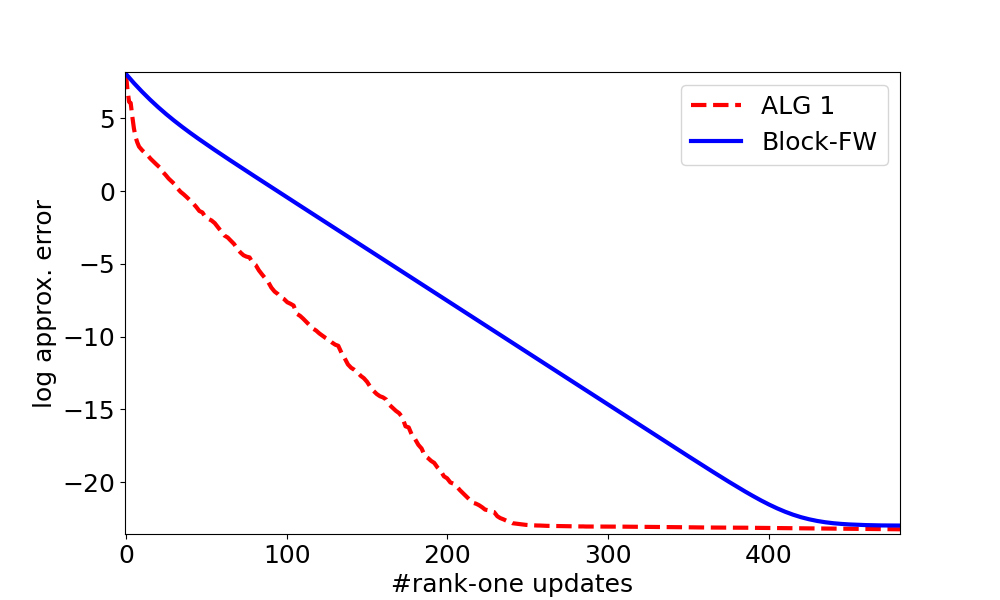}
         \caption{LS, $r^*=5$}
         %\label{fig:three sin x}         
     \end{subfigure}

     \begin{subfigure}[t]{0.32\textwidth}
         \centering
         \includegraphics[width=\textwidth]{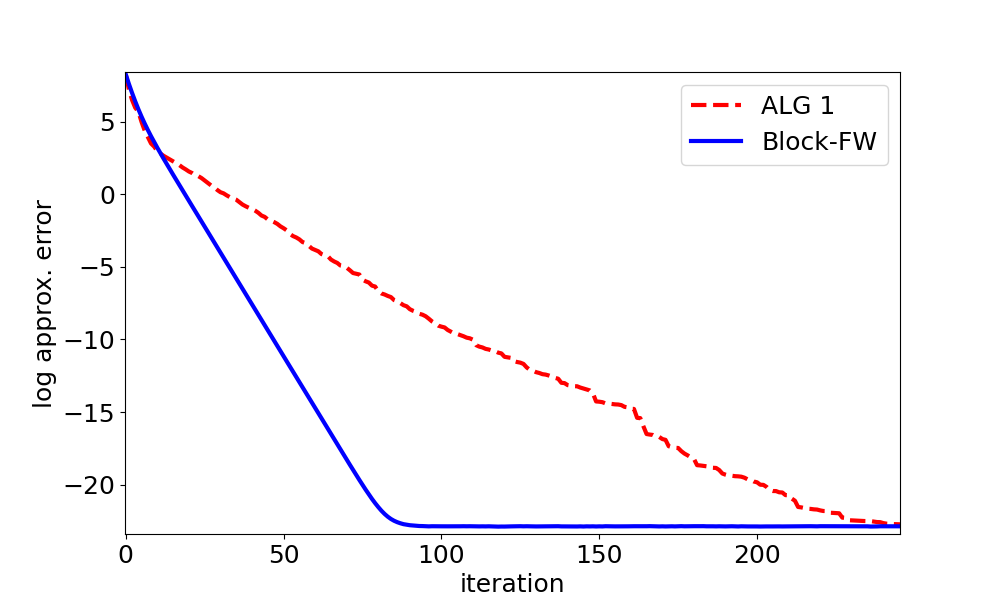}
         \caption{LS, $r^*=7$}
         %\label{fig:y equals x}
     \end{subfigure}
     \begin{subfigure}[t]{0.32\textwidth}
         \centering
         \includegraphics[width=\textwidth]{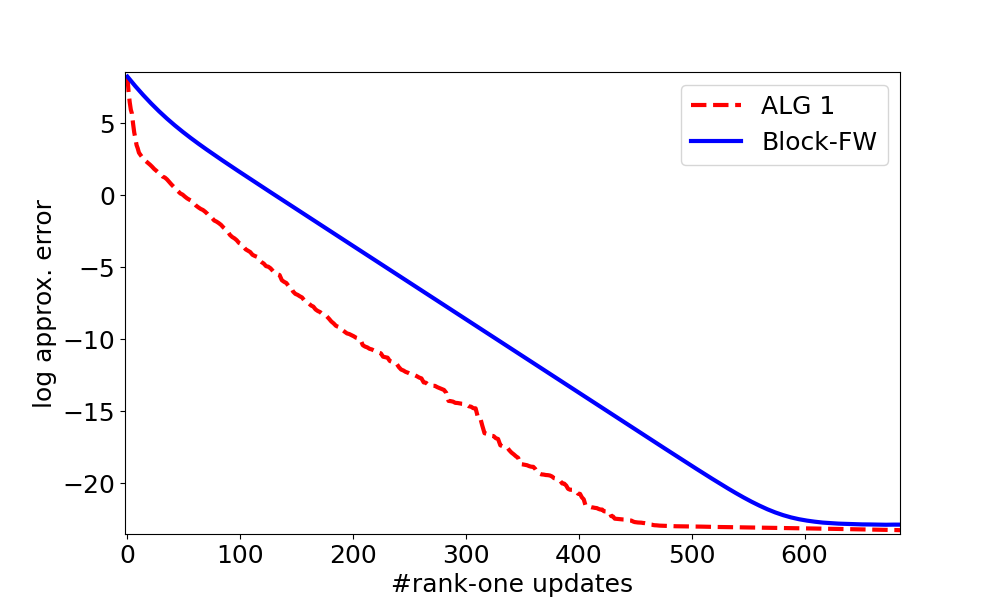}
         \caption{LS, $r^*=7$}
         %\label{fig:three sin x}         
     \end{subfigure}
     
          \begin{subfigure}[t]{0.32\textwidth}
         \centering
         \includegraphics[width=\textwidth]{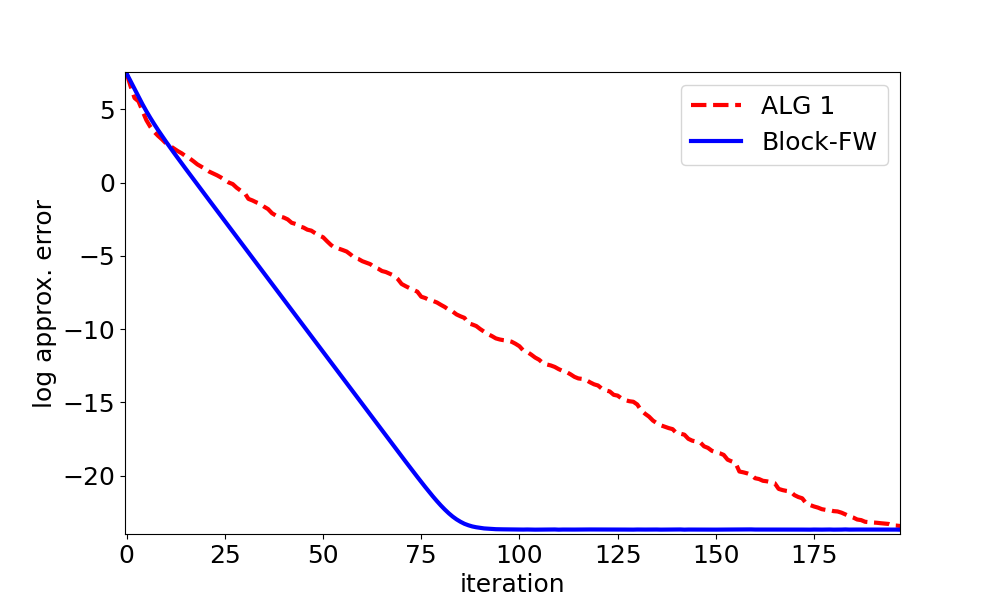}
         \caption{Huber, $r^*=5$}
         %\label{fig:y equals x}
     \end{subfigure}
     \begin{subfigure}[t]{0.32\textwidth}
         \centering
         \includegraphics[width=\textwidth]{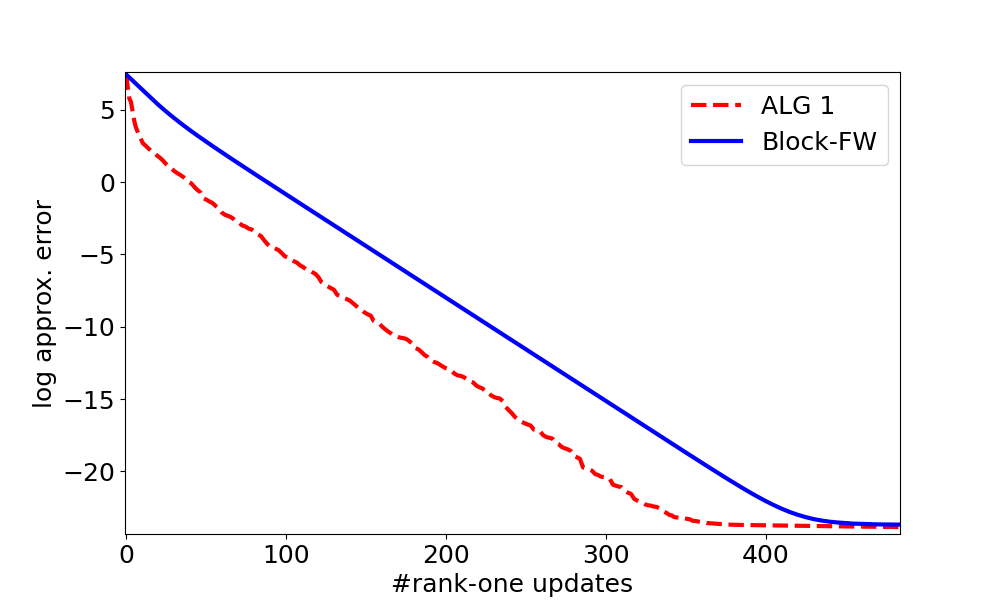}
         \caption{Huber, $r^*=5$}
         %\label{fig:three sin x}         
     \end{subfigure}

          \begin{subfigure}[t]{0.32\textwidth}
         \centering
         \includegraphics[width=\textwidth]{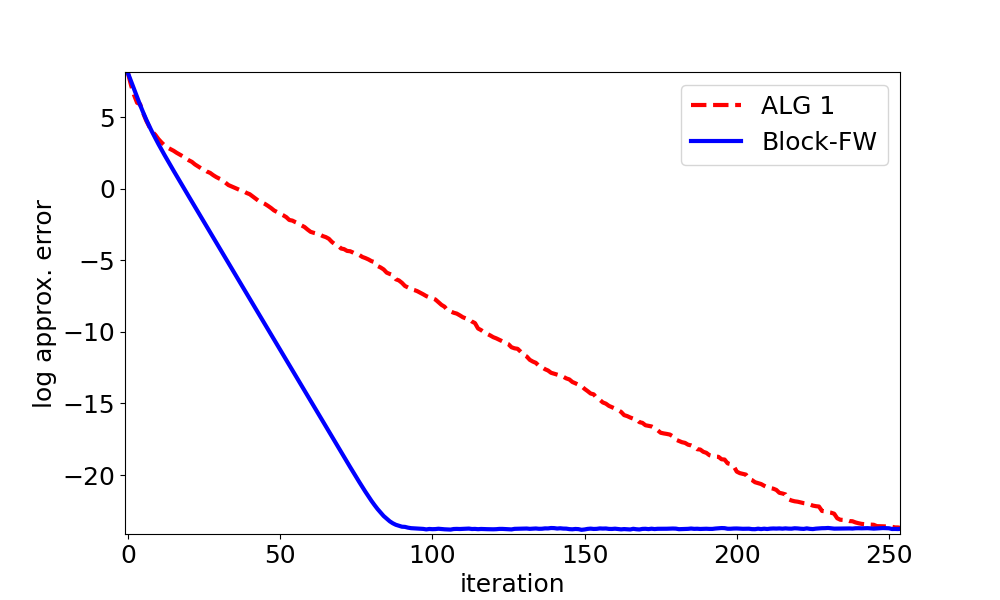}
         \caption{Huber, $r^*=8$}
         %\label{fig:y equals x}
     \end{subfigure}
     \begin{subfigure}[t]{0.32\textwidth}
         \centering
         \includegraphics[width=\textwidth]{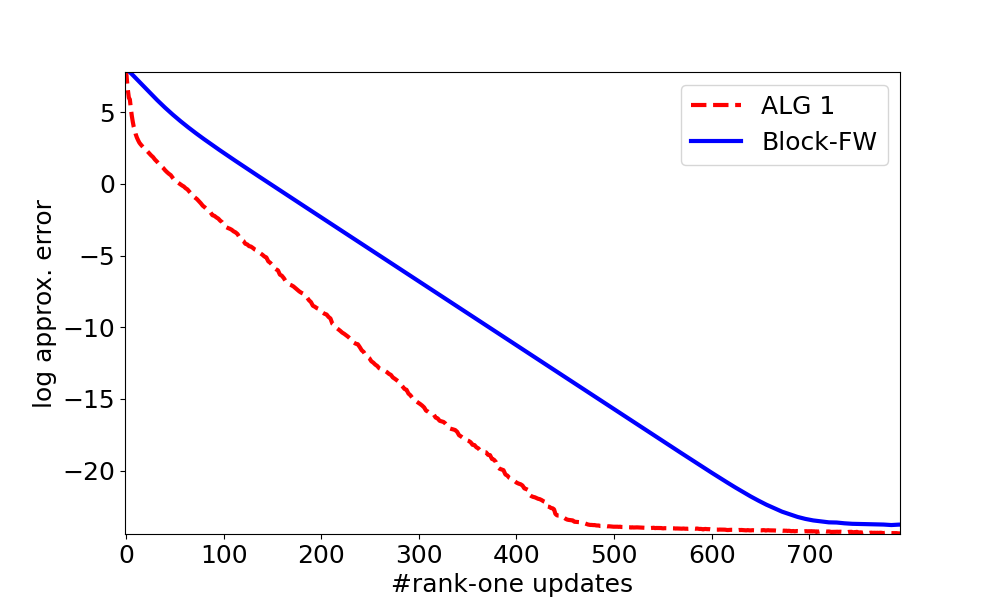}
         \caption{Huber, $r^*=8$}
         %\label{fig:three sin x}         
     \end{subfigure}     
        \caption{Comparison of  the Block-FW method and  our Algorithm  \ref{alg:AFW}. In all experiments, except for Huber $r^*=8$, we set $n=100$, $m=15nr^*$. In Huber $r^*=8$ we set $n=100$, $m=20nr^*$.}
        \label{fig:compareBlockFW}
\end{figure}

%\subsection{What if strict complementarity does not hold?}
%Finally, we examine what happens when we solve Problem \eqref{eq:expProb} when setting $\tau = \trace(\X_{\sharp}) = 1$ and without measurement noise, i.e., $\b = \b_{\sharp}$. In this case, the ground truth matrix $\X_{\sharp}$ is an optimal solution of Problem \eqref{eq:expProb} and in particular $\nabla{}f(\X_{\sharp}) = \mathbf{0}$, meaning the strict complementarity condition does not hold. We see in Figure \ref{fig:noSC} that in this case, our Algorithm \ref{alg:AFW} maintains its linear convergence, while the standard Frank-Wolfe with line-search method converges with a sub-linear rate. It remains an open question if the quadratic growth condition alone suffices to prove linear convergence for our method, see also Section \ref{sec:future}.

%\begin{figure}[H]
%     \centering
%         \includegraphics[width=0.4\textwidth]{FWvsNFW_r=3_noSC}
%        \caption{Comparison of Frank-Wolfe with line-search and our Algorithm \ref{alg:AFW} in case strict complementarity does not hold. We set $n=100$, $r^*=3$, and $m=10nr^*$.}
%        \label{fig:noSC}
%\end{figure}

\section{Future Directions}\label{sec:future}
It is interesting to study whether there exists a first-order method similar to our Algorithm \ref{alg:AFW}, which only relies on a constant number of efficient rank-one matrix computations per iteration, that enjoys any of the following features: I. Converges linearly (in particular without explicit dependence on the ambient dimension) without the strict complementarity assumption (though  still assuming quadratic growth), II. Completely deterministic, and III. Independent of the choice of norm and in particular does not require the knowledge of the smoothness parameter $\beta$.

\section{Acknowledgments}
%Dan Garber is supported by the ISRAEL SCIENCE FOUNDATION (grant No. 2267/22). 
This work was funded by the European Union (ERC,  ProFreeOpt, 101170791). Views and opinions expressed are however those of the author(s) only and do not necessarily reflect those of the European Union or the European Research Council Executive Agency. Neither the European Union nor the granting authority can be held responsible for them.
%There are no competing interests or compliance or ethical issues that we are aware of.

\bibliography{bibs}
\bibliographystyle{plain}

\appendix

\section{Proof of Lemma \ref{lem:stepsize}}
We first restate the lemma and then prove it.
\begin{lemma}%\label{lem:stepsize}
Let $\X\in\mbS^n_+$ and $\v\in\textrm{Im}(\X)$, $\v\neq\mathbf{0}$. Consider the matrix $\Y = \X - \lambda\v\v^{\top}$ for some $\lambda \geq 0$. If $\lambda \leq (\v^{\top}\X^{\dagger}\v)^{-1}$, then $\Y\succeq 0$. Moreover, if $\lambda = (\v^{\top}\X^{\dagger}\v)^{-1}$, then $\rank(\Y) = \rank(\X)-1$ and  $\X^{\dagger}\v\in\textrm{Ker}(\Y)$.
%$\A-\lambda\v\v^{\top}\succeq 0$ if and only if $1-\lambda\v^{\top}\A^{\dagger}\v \geq 0$. 
\end{lemma}
\begin{proof}
%Using the assumption that $\lambda \leq (\v^{\top}\X^{\dagger}\v)^{-1}$ and $\X\succeq 0$ we have that,
It holds that,
\begin{align*}
1 - \lambda\v^{\top}\X^{\dagger}\v \geq 0 &\Longrightarrow 1- \lambda_1\left({\lambda{\X^{\dagger}}^{1/2}\v\v^{\top}{\X^{\dagger}}^{1/2}}\right) \geq 0\\
&\Longrightarrow  \lambda_n\left({\I -  \lambda{\X^{\dagger}}^{1/2}\v\v^{\top}{\X^{\dagger}}^{1/2}}\right) \geq 0 \\
&\Longrightarrow  \I -  \lambda{\X^{\dagger}}^{1/2}\v\v^{\top}{\X^{\dagger}}^{1/2} \succeq 0 \\
&\Longrightarrow  \X^{1/2}\left({\I -  \lambda{\X^{\dagger}}^{1/2}\v\v^{\top}{\X^{\dagger}}^{1/2}}\right)\X^{1/2} \succeq 0 \\
&\underset{(a)}{\Longrightarrow}  \X -  \lambda\v\v^{\top}  \succeq 0, 
\end{align*}
where (a) relies on the assumption that $\v\in\textrm{Im}(\X)$ and so $\X^{1/2}{\X^{\dagger}}^{1/2}\v = \v$.

We continue to prove the second part of the lemma. Suppose now that $\lambda = (\v^{\top}\X^{\dagger}\v)^{-1}$. Note that since $\v\in\textrm{Im}(\X)$, we have that $\textrm{Im}(\Y) \subseteq \textrm{Im}(\X)$. Consider now the vector $\u = \X^{\dagger}\v\in\textrm{Im}(\X)$. It holds that,
\begin{align*}
\Y\u = \left({\X-\frac{1}{\v^{\top}\X^{\dagger}\v}\v\v^{\top}}\right)\X^{\dagger}\v = \X\X^{\dagger}\v - \v = \mathbf{0},
\end{align*}
implying that $\u\notin\textrm{Im}(\Y)$ and so, $\rank(\Y) = \rank(\X)-1$.
\end{proof}

\section{Fast Update of the Pseudoinverse Matrix}
The following lemma is based on \cite{meyer1973generalized}. While \cite{meyer1973generalized} considers a more general case of non-square complex matrices, the result for symmetric real matrices can be significantly simplified.
\begin{lemma}\label{lem:pinvUpdate}
Let $\A\in\mbS^n$ and let $\c,\d$ be linearly dependent vectors in $\reals^n$. Define  the following: $\k = \A^{\dagger}\c$ (column vector), $\h = \d^{\top}\A^{\dagger}$ (row vector), $\u = (\I-\A\A^{\dagger})\c$ (column vector),  $\v = \d^{\top}(\I-\A^{\dagger}\A)$ (row vector), and $\zeta = 1 + \d^{\top}\A^{\dagger}\c$ (scalar).  Additionally, if $\zeta \neq 0$, define: $\p = -\frac{\Vert{\k}\Vert_2^2}{\zeta}\v^{\top}-\k$ (column vector), $\q = \frac{\Vert{\v}\Vert_2^2}{\zeta}\A^{\dagger}\k - \h^{\top}$ (column vector), and $\sigma = \Vert{\k}\Vert_2^2\Vert{\v}\Vert_2^2+\zeta^2$. Then, it holds that
\begin{align*}
(\A+\c\d^{\top})^{\dagger} = \left\{ \begin{array}{ll}
         \A^{\dagger}  - \k\u^{\dagger}-\v^{\dagger}\h+\zeta\v^{\dagger}\u^{\dagger} & \mbox{if $\c \notin \textrm{Im}(\A)$};\\
         \A^{\dagger} + \frac{1}{\zeta}\v^{\top}\k^{\top}\A^{\dagger} - \frac{\zeta}{\sigma}\p\q^{\top} & \mbox{if $\c\in\textrm{Im}(\A) \textrm{ and } \zeta \neq 0$};\\         
        \A^{\dagger} - \k\k^{\dagger}\A^{\dagger} - \A^{\dagger}\h^{\dagger}\h + (\k^{\dagger}\A^{\dagger}\h^{\dagger})\k\h & \mbox{if $\c\in\textrm{Im}(\A) \textrm{ and } \zeta = 0$}.\end{array} \right. 
\end{align*}
In particular, given $\A^{\dagger}$, one can compute $(\A+\c\d^{\top})^{\dagger}$ in $O(n^2)$ time.
\end{lemma}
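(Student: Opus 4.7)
The plan is to treat this as a careful specialization of Meyer's general rank-one pseudoinverse update theorem \cite{meyer1973generalized} to the setting where the base matrix $\A$ is symmetric and the two vectors $\c,\d$ defining the update $\B=\A+\c\d^{\top}$ are linearly dependent (so that $\B$ remains symmetric). First I would observe that since $\A\in\mbS^n$ we have $\A^{\dagger}\in\mbS^n$ and $\A\A^{\dagger}=\A^{\dagger}\A$ is the orthogonal projector onto $\textrm{Im}(\A)$; combined with linear dependence of $\c,\d$, this forces $\c\in\textrm{Im}(\A)\Longleftrightarrow\d\in\textrm{Im}(\A^{\top})=\textrm{Im}(\A)$ and likewise $\u=(\I-\A\A^{\dagger})\c$ vanishes iff $\v=\d^{\top}(\I-\A^{\dagger}\A)$ vanishes. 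This equivalence is what collapses Meyer's six-case classification for rectangular complex matrices down to the three symmetric cases stated in the lemma.

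Next I would dispatch each of the three cases by direct invocation of the corresponding branch of Meyer's theorem. For the case $\c\notin\textrm{Im}(\A)$, the formula $\B^{\dagger}=\A^{\dagger}-\k\u^{\dagger}-\v^{\dagger}\h+\zeta\v^{\dagger}\u^{\dagger}$ is Meyer's case where neither projected residual vanishes; here $\u^{\dagger}=\u^{\top}/\Vert\u\Vert_2^2$ and $\v^{\dagger}=\v^{\top}/\Vert\v\Vert_2^2$ since $\u,\v$ are nonzero vectors (rank-one). For the case $\c\in\textrm{Im}(\A)$ with $\zeta\neq 0$, both $\u=0$ and $\v=0$, and Meyer's theorem supplies the rank-preserving update involving the vectors $\p,\q$ and the scalar $\sigma$ as stated. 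For the degenerate case $\c\in\textrm{Im}(\A)$ with $\zeta=0$, the update formula contains the symmetric combination $\k\k^{\dagger}\A^{\dagger}+\A^{\dagger}\h^{\dagger}\h$ minus a rank-one correction. In each branch, the primary verification is that the claimed $\B^{\dagger}$ satisfies the four Moore-Penrose conditions with respect to $\B=\A+\c\d^{\top}$; this is either already carried out in \cite{meyer1973generalized} or reduces to routine algebra using $\A^{\dagger}\A\A^{\dagger}=\A^{\dagger}$ and the orthogonality properties $\A\u=0$, $\v\A=0$.

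For the computational complexity bound of $O(n^2)$, I would walk through the operations appearing in the three formulas and observe that, given $\A^{\dagger}$ as an $n\times n$ matrix, each of the auxiliary quantities $\k,\h,\u,\v,\zeta,\p,\q,\sigma$ is obtainable by a single matrix-vector product with $\A$ or $\A^{\dagger}$ and scalar operations, hence in $O(n^2)$ time and $O(n)$ extra memory. The remaining terms in each update formula are rank-one outer products of $n$-vectors and scalar multiples thereof, each costing $O(n^2)$ to form and add to $\A^{\dagger}$; summing the constant number of such terms preserves the $O(n^2)$ bound.

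The main obstacle I foresee is notational rather than conceptual: Meyer's paper treats the general rectangular complex case and uses a slightly different parametrization, so care is needed to match conventions (in particular, the definition $\zeta=1+\d^{\top}\A^{\dagger}\c$ rather than $1-\d^{\top}\A^{\dagger}\c$ reflects our sign convention $\B=\A+\c\d^{\top}$), and to verify that the symmetry $\c\parallel\d$ correctly identifies which of Meyer's cases applies without introducing additional subcases. Once this bookkeeping is done, both the algebraic identity and the complexity claim follow immediately.
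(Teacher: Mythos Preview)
Your proposal is correct and matches the paper's approach: the paper does not give an independent proof of this lemma but simply states it as a specialization of Meyer's rank-one pseudoinverse update \cite{meyer1973generalized} to the real symmetric case with linearly dependent $\c,\d$, noting (as you do) that this collapses Meyer's six cases to three. Your additional detail on the $O(n^2)$ complexity and on why $\u=0\Leftrightarrow\v=0$ under the symmetry and linear-dependence hypotheses is accurate and fills in what the paper leaves implicit.
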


\section{Auxiliary Lemmas}
The following lemma is well-known, but for clarity of presentation we include it and the short proof.
\begin{lemma}\label{lem:gradDist}
Let $\Psi:\E\rightarrow\reals$ be $\beta_{\Psi}$-smooth and convex over $\mK$ --- a convex and compact subset of a Euclidean space $\E$. The gradient $\nabla{}\Psi$ is constant over the set of minimizers $\arg\min_{\z\in\mK}\Psi(\z)$, and for any $\z\in\mK$ it holds that
\begin{align}\label{lem:scPoly:cond}
\Vert{\nabla\Psi(\z) - \nabla\Psi(\z^*)}\Vert^2 \leq \beta_{\Psi}\left({\Psi(\z) - \Psi(\z^*)}\right),
\end{align}
where $\z^*$ is any point in $\arg\min_{\z\in\mK}\Psi(\z)$.
\end{lemma}
\begin{proof}
Since $\Psi(\cdot)$ is smooth we have that,
\begin{align*}
\Vert{\nabla\Psi(\z) - \nabla\Psi(\z^*)}\Vert^2 &\leq \beta_{\Psi}\langle{\z-\z^*, \nabla\Psi(\z) - \nabla\Psi(\z^*)}\rangle  \\
&\leq \beta_{\Psi}\left({\Psi(\z) - \Psi(\z^*)}\right) + \beta_{\Psi}\langle{\z^*-\z,\nabla\Psi(\z^*)}\rangle \\
&\leq \beta_{\Psi}\left({\Psi(\z) - \Psi(\z^*)}\right),
\end{align*}
where the second inequality is due to the convexity of $\Psi$, and the last one is due to the first-order optimality condition.

This proves both parts of the lemma.
\end{proof}

\begin{lemma}\label{lem:alphabeta}
If Assumption \ref{ass:sc} holds with $r^* \geq 2$, then the smoothness and quadratic growth constants satisfy $\alpha \leq \beta$.
\end{lemma}
\begin{proof}
Let $\X^*$ be an optimal solution. Under Assumption \ref{ass:sc}, we have that $\rank(\X^*) = r^*$, and let us write its eigen-decomposition as $\X^* = \U\Lambda\U^{\top}$, where $\U\in\reals^{n\times{}r^*}$ and $\Lambda\in\reals^{r^*\times r^*}$ is diagonal and positive definite. Let $\Lambda'\in\reals^{r^*\times{}r^*}$ be a diagonal positive semidefinite matrix with unit trace such that $\rank(\Lambda') < \rank(\Lambda)$, and consider the matrix $\X = \U\Lambda'\U^{\top}\in\mS^n$. From the first-order optimality condition, see for instance Lemma 5.2 in \cite{garber2021convergence}, it follows that $\langle{\X - \X^*, \nabla{}f(\X^*)}\rangle = 0$. Thus, using the smoothness and quadratic growth of $f$ we have that,
\begin{align*}
f(\X) &\leq f(\X^*) + \langle{\X-\X^*,\nabla{}f(\X^*)}\rangle + \frac{\beta}{2}\Vert{\X-\X^*}\Vert_F^2 \\
&= f(\X^*)  + \frac{\beta}{2}\Vert{\X-\X^*}\Vert_F^2 \\
&\leq f(\X^*)  + \frac{\beta}{\alpha}\left({f(\X) - f(\X^*)}\right).
\end{align*}
Since $\rank(\Lambda') < \rank(\Lambda) = \rank(\X^*)$, we have that, under Assumption \ref{ass:sc}, $\X$ is not an optimal solution, meaning $f(\X) > f(\X^*)$, which completes the proof.
\end{proof}

\end{document}